\newcommand{\ignore}[1]{}
\title[3D Vlasov equations
of non-Newtonian fluids type] {Well-posedness of strong solutions for the Vlasov equation
coupled to non-Newtonian fluids in dimension three}
\author[Kyungkeun Kang]{Kyungkeun Kang}
\address[Kyungkeun Kang]{\newline Department of Mathematics \newline Yonsei University, Seoul 03722, Korea (Republic of)}
\email{kkang@yonsei.ac.kr}
\author[Hwa Kil Kim]{Hwa Kil Kim}
\address[Hwa Kil Kim]{\newline Department of Mathematics Education \newline Hannam University, Daejeon 34430, Korea (Republic of)}
\email{hwakil@hnu.kr}
\author[J.-M. Kim]{Jae-Myoung Kim}
\address[Jae-Myoung Kim]{\newline
Department of Mathematics Education, Andong National University
\newline Andong, Gyeongsangbuk-do, 36729, Korea (Republic of)}\email{jmkim02@anu.ac.kr}
\begin{document}

\newtheorem{theorem}{Theorem}[section]
\newtheorem{lemma}[theorem]{Lemma}
\newtheorem{corollary}[theorem]{Corollary}
\newtheorem{proposition}[theorem]{Proposition}
\newtheorem{remark}[theorem]{Remark}
\newtheorem{definition}[theorem]{Definition}
\newtheorem{example}[theorem]{Example}
\newtheorem{assumption}[theorem]{Assumption}

\newcommand{\bke}[1]{\left( #1 \right)}
\newcommand{\bkt}[1]{\left[ #1 \right]}
\newcommand{\bkv}[1]{\left| #1 \right|}
\newcommand{\bket}[1]{\left\{ #1 \right\}}

\renewcommand{\theequation}{\thesection.\arabic{equation}}
\renewcommand{\thetheorem}{\thesection.\arabic{theorem}}
\renewcommand{\thelemma}{\thesection.\arabic{lemma}}
\newcommand{\bbr}{\mathbb R}
\newcommand{\R}{{\mathbb R }}
\newcommand{\Z}{{\mathbb Z }}
\newcommand{\bbe}{\mathbb E}
\newcommand{\abs}[1]{\left| #1 \right|}
\newcommand{\bbz}{\mathbb Z}
\newcommand{\bbn}{\mathbb N}
\newcommand{\bbs}{\mathbb S}
\newcommand{\bbp}{\mathbb P}
\newcommand{\bbt}{\mathbb T}
\newcommand{\norm}[1]{\left\Vert #1 \right\Vert}
\newcommand{\Ass}{\textup{(\textbf{A})\,}}
\newcommand{\ddiv}{\textrm{div}}
\newcommand{\bn}{\bf n}
\newcommand{\rr}[1]{\rho_{{#1}}}
\newcommand{\rabs}[1]{\left. #1 \right|}
\newcommand{\thh}{\theta}
\def\charf {\mbox{{\text 1}\kern-.24em {\text l}}}
\renewcommand{\arraystretch}{1.5}

\newenvironment{thm1}{{\par\noindent\bf
            Proof of Theorem \ref{thm1}. }}{\hfill\fbox{}\par\vspace{.2cm}}
\newenvironment{thm2}{{\par\noindent\bf
            Proof of Theorem \ref{thm2}. }}{\hfill\fbox{}\par\vspace{.2cm}}

\newenvironment{thm3}{{\par\noindent\bf
            Proof of Theorem \ref{thm3}. }}{\hfill\fbox{}\par\vspace{.2cm}}

\newenvironment{lem3}{{\par\noindent\bf
            Proof of Lemma \ref{r-300}. }}{\hfill\fbox{}\par\vspace{.2cm}}
\newenvironment{lem4}{{\par\noindent\bf
            Proof of Lemma \ref{r-400}. }}{\hfill\fbox{}\par\vspace{.2cm}}

\newenvironment{lem5}{{\par\noindent\bf
            Proof of Lemma \ref{local-ex-u}. }}{\hfill\fbox{}\par\vspace{.2cm}}

\thanks{}

\subjclass[2010]{35D35,35Q30,76A05,35Q83}

\keywords{non-Newtonian fluid; Navier-Stokes equations; Vlasov
equation; strong solution}

\begin{abstract}
We consider the Cauchy problem for coupled system of Vlasov and
non-Newtonian fluid equations. We establish local well--posedness of
the strong solutions, provided that the initial data are regular
enough.
\end{abstract}

\maketitle

\section{Introduction}\label{sect:intro}
\setcounter{equation}{0}

We study the following coupled system of Vlasov and non-Newtonian
fluid equations in phase space $\R^3\times \R^3$:
\begin{align}
\begin{aligned} \label{main}
&\partial_t f +v \cdot \nabla_xf+\nabla_v \cdot [(u-v)f]=0,  \\
&\partial_t u -\nabla \cdot\big( G[|Du|^2] Du \big)+(u\cdot \nabla)u+\nabla p=-\int_{\R^3}(u-v)fdv, \\
&\mbox{div}\ u=0,
\end{aligned}
\end{align}
where $Du$ is the symmetric part of the velocity gradient, namely,
\begin{equation*}\label{vis}
~~Du = D_{ij}u:= \frac{1}{2}\Big(\frac{\partial u_i}{\partial
x_j}+\frac{\partial u_j}{\partial x_i}\Big), \ i,j=1,2,3.
\end{equation*}
In \eqref{main}, we indicate by
$u:\R^3\times (0,\, T)\rightarrow\R^3$, $p:\R^3\times (0,\,
T)\rightarrow\R$ and $f:\R^3\times\R^3\times (0,\, T)\rightarrow\R$
 the flow velocity vector, the scalar pressure and the density
function of particles, respectively.
In this article, we study the Cauchy problem of
\eqref{main} with
\begin{equation}\label{initial}
 f(x,v,0) = f_0(x,v), \quad u(x,0)= u_0(x),
\quad x,v \in \bbr^3,
\end{equation} and in particular, $u_0$ holds the compatibility condition,
that is, $\mbox{div}\ u_0=0$.

Here, we make some assumptions on the viscous part of the stress
tensor, $G[|Du|^2]$.
\begin{assumption}\label{G-assume}
We suppose that $G : [0,\infty) \rightarrow [0,\infty)$ is a smooth
function and satisfies the following the structure conditions: There
is a positive constant $m_0$ such that
 for any $s\in[0,\infty)$
\begin{equation}\label{G : property}
\begin{aligned}
&G[s] \geq m_0 , \qquad G[s]+ 2 G^{'}[s]s \geq m_0,\\
&|G^{(k)}[s] s^\alpha| \leq C_k  |G^{(k-1)}[s]|, \quad \alpha \in \{
0,1 \},
\end{aligned}
\end{equation}
where $G^{(k)}[\cdot]$ is the $k-$th derivative of $G$, and $m_0$
and $C_k$ are positive constants.
\end{assumption}

We remark that some typical types of $G[s]$ satisfying Assumption \ref{G-assume} are of the following power-law
models, e.g.
\begin{equation}\label{example-100}
G[s]=(m_0^{\frac{2}{q-2}}+s)^{\frac{q-2}{2}},\quad 2<q<\infty,
\end{equation} or
\[
G[s]=m_0+(\sigma+s)^{\frac{q-2}{2}},\quad 1<q<\infty,\quad \sigma>0.
\]
We recall some known results for the case $G[s]=1$, namely, the
fluid equations is of Newtonian case. Hamdache \cite{Ham} proved the
global existence of weak solutions to the time dependent Stokes
system coupled with the Vlasov equation in a bounded domain. Later,
existence of weak solution was extended to the Vlasov-Navier-Stokes
system by Boudin et al. \cite{B-D09} in a periodic domain (refer
also to \cite{G-J1, G-J2} for hydrodynamic limit problems). When the
fluid is inviscid, Baranger and Desvillettes established the local
existence of solutions to the compressible Vlasov-Euler equations
\cite{B-D}.

In case of non-Newtonian fluid, recently, Mucha et. al. \cite{MPP18}
investigated the Cucker--Smale flocking model coupled with an
incompressible viscous generalized Navier-Stokes equations with $G$
given in \eqref{example-100}, for $q\geq \frac{11}{5}$ in a periodic
spatial
domain $\bbt^3$. To be more precise, 
the following
equations are considered:
\begin{equation}\label{flocking-f}
\partial_t f +v \cdot \nabla_xf+\nabla_v \cdot [\bke{F_{CS}(f)+(u-v)}f]=0,
\end{equation}
\begin{equation}\label{flocking-u}
\partial_t u -\nabla \cdot\bke{G[|Du|^2]Du}+(u\cdot \nabla)u+\nabla p=-\int_{\R^3}(u-v)fdv,
\end{equation}
where
\[
F_{CS}(f)(t, x, v) = \iint_{\bbt^3\times \bbr^3} (w- v)\psi(|x-
y|) f (t, y, w)\,dydw
\]
and $\psi(\cdot)$ the communication weight is positive, decreasing
and $\|\psi\|_{C^1}< \infty$.

The viscosity part of stress tensor $G$ in \cite{MPP18} was assumed
to satisfy the structure conditions, which are given as
        \begin{equation} \label{Coer-D}
        G[s]s\geq C(\mu_0s+s^{\frac{q}{2}}), \quad \bkv{G[s]}\leq
        C(\mu_0+s)^{\frac{q-2}{2}},\quad  \mu_0>0,
        \end{equation}
        \begin{equation}\label{monotonicity}
        \bke{G[|Dv|^2]Dv-G[|Dw|^2]Dw: Dv-Dw} \geq
        C\bke{\mu_0|Dv-Dw|^2+|Dv-Dw|^q},
        \end{equation}
      \begin{equation} \label{Coer-D-1}
       \bke{G[s]+G^\prime[s]s}s\approx(\mu_0+s)^{\frac{q}{2}}.
        \end{equation}

 Under the assumption that a nonnegative $f_0 \in
(L^\infty\cap L^1)(\mathbb{T}^3 \times \R^3)$ with compact support
and $u_0 \in W^{1,2}(\mathbb{T}^3)$, existence of weak solutions was
established for the system \eqref{flocking-f}-\eqref{flocking-u} in
the class (see \cite[Theorem 2.1]{MPP18})
\[
 f\in L^\infty\bke{(0, T); L^1\cap L^\infty (\mathbb{T}^3 \times \R^3)}, \qquad  |v|^2f\in  L^\infty\bke{[0, T];L^1(\mathbb{T}^3 \times \R^3)},
\]
\[
u \in C\bke{[0, T]; L^2(\mathbb{T}^3}) \cap L^\infty\bke{0, T;W^{1,2}(\mathbb{T}^3)}\cap L^2\bke{0, T;W^{2,2}(\mathbb{T}^3)},
\]
\[
\nabla u \in  L^\infty\bke{0, T; L^q(\mathbb{T}^3)} \cap L^q\bke{0,
T;L^{3q}(\mathbb{T}^3)},\quad u_t \in L^2\bke{0, T;
L^2(\mathbb{T}^3)}.
\]
It was also shown via a Lyapunov functional approach that
convergence of flocking states is made exponentially fast in time as
well.

The second and third authors with their collaborators also proved,
independently, global--in--time existence of weak solutions for the
system \eqref{flocking-f}-\eqref{flocking-u} with power-law types of
$G$ including the degenerate case i.e.
$G[s]=(\mu_0+s)^{\frac{q-2}{2}}$ with $\mu_0\geq 0$ and $q>2$. More
precisely, if the initial data $f_0 \in L^\infty\cap
L^1(\mathbb{T}^3 \times \R^3)$ with the compact support and $u_0 \in
L^{2}(\mathbb{T}^3)$, weak solutions exist in the class
\begin{equation} \label{G-class-1000}
 f\in L^\infty\bke{(0, T); L^1\cap L^\infty (\mathbb{T}^3 \times \R^3)}, \qquad  |v|^2f\in  L^\infty\bke{[0, T];L^1(\mathbb{T}^3 \times \R^3)},
\end{equation}
\begin{equation} \label{G-class-2000}
u \in L^\infty\bke{[0, T]; L^2(\mathbb{T}^3)} \cap L^q\bke{0,
T;W^{1,q}(\mathbb{T}^3)},
\end{equation}
\begin{equation} \label{G-class-3000}
L^2\bke{0, T;W^{1,2}(\mathbb{T}^3)}, \qquad \mbox{if}\quad \mu_0>0.
\end{equation}

As following almost the same arguments in \cite{HKKP18}, we also can
establish existence of global-in-time weak solutions to \eqref{main}
- \eqref{initial}, provided that $G$ satisfies structure conditions
\eqref{G : property}, \eqref{Coer-D} and \eqref{monotonicity} with
$q>2$. Since its verification is rather tedious repetitions of the
arguments in \cite{HKKP18}, we just give the statement without
proof.

\begin{theorem}
        Let $T>0$. Suppose that initial data $f_0 \in (L^{1}\cap L^\infty) (\bbr^3\times \bbr^3)$
        and $u_0 \in L^{2}(\bbr^3)$ satisfy
        \begin{align*}
        \begin{aligned}
        & (i)~\iint_{\bbr^3 \times \bbr^3} f_{0} \, dv \, dx = 1,   \quad \iint_{\bbr^3 \times \bbr^3}|v|^2f_0 \, dv \, dx <\infty,   \quad f_0 \geq 0. \cr
        & (ii)~\mbox{supp}_{v} f_0 ~\mbox{is bounded in $\bbr^3$
            for each $x \in \bbr^3$}.
        \end{aligned}
        \end{align*}
          Assume further that $G$ satisfies \eqref{G : property}, \eqref{Coer-D} and \eqref{monotonicity} with
        $q>2$.
        Then, there exists a global weak solution
        $(f,u)$ in the class \eqref{G-class-1000}-\eqref{G-class-3000}
        to the equations \eqref{main}-\eqref{initial}.
\end{theorem}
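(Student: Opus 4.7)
The plan is to construct global weak solutions via a regularized iteration scheme combined with the monotone operator method, essentially transposing the periodic argument of \cite{HKKP18} to the whole space $\mathbb{R}^3$. First I would smooth the initial data, taking $f_0^\varepsilon \in C_c^\infty(\mathbb{R}^3\times \mathbb{R}^3)$ with $f_0^\varepsilon \geq 0$ and $\mathrm{supp}_v f_0^\varepsilon$ uniformly bounded, and $u_0^\varepsilon \in C_c^\infty(\mathbb{R}^3)$ with $\mathrm{div}\, u_0^\varepsilon = 0$, so that the initial assumptions (i)--(ii) hold with uniform bounds in $\varepsilon$. I would also introduce a cutoff of the drag term in the fluid equation (e.g.\ mollification in $v$ or a truncation at large velocities) so that the coupling source belongs to a nice space while preserving the energy structure.

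Next I would set up a Picard-type iteration: given $u^n$ with $u^n(\cdot,0)=u_0^\varepsilon$, solve the linear Vlasov equation
\begin{equation*}
\partial_t f^{n+1} + v\cdot\nabla_x f^{n+1} + \nabla_v\cdot[(u^n-v)f^{n+1}]=0
\end{equation*}
by the method of characteristics, inheriting $f^{n+1}\geq 0$, the $L^1$-mass conservation, and the pointwise bound $\|f^{n+1}(t)\|_{L^\infty}\leq e^{3T}\|f_0^\varepsilon\|_{L^\infty}$ (since $\nabla_v\cdot(u^n-v)=-3$). Given $f^{n+1}$, existence of $u^{n+1}$ for the generalized Navier--Stokes equation with source $-3\int(u^{n+1}-v)f^{n+1}\,dv$ follows from the standard Galerkin/monotone-operator theory for power-law fluids under \eqref{Coer-D}--\eqref{monotonicity}.

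The core a priori estimate is the energy--moment identity obtained by testing the fluid equation with $u$ and multiplying the Vlasov equation by $|v|^2/2$ and integrating, which after cancellation of the coupling terms gives
\begin{equation*}
\frac{d}{dt}\bke{\frac{1}{2}\|u\|_{L^2}^2+\iint \tfrac{|v|^2}{2}f\,dv\,dx}+\int G[|Du|^2]|Du|^2\,dx+3\iint|u-v|^2 f\,dv\,dx\leq 0.
\end{equation*}
Via \eqref{Coer-D} this yields uniform bounds in the class \eqref{G-class-1000}--\eqref{G-class-3000}. Moment propagation of $\int|v|^2 f\,dv\,dx$ and the $L^\infty$ bound on $f$ provide tightness in $v$ for the particle density. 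Convergence of the iteration (or a priori of the $\varepsilon$-approximation) follows from Aubin--Lions applied to $u^n$: bounds on $\partial_t u^n$ in a negative Sobolev space together with the $L^q(W^{1,q})$ bound give strong convergence of $u^n$ in $L^q(L^q_{\mathrm{loc}})$, while velocity averaging plus the transport structure give a.e.\ convergence of $f^n$ and its velocity moments.

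The main obstacle, and the reason the argument is not wholly routine, is the passage to the limit in the nonlinearity $G[|Du^n|^2]Du^n$, which requires the Minty--Browder trick: using monotonicity \eqref{monotonicity} with test functions $Du^n-D\varphi$, combined with strong convergence of $u^n$ and weak convergence of the stress, to identify the limit. A secondary difficulty specific to the whole-space setting (versus the periodic domain of \cite{HKKP18}) is the lack of compact Sobolev embedding, which forces a careful tightness argument at spatial infinity for $u^n$ and for $\int f^n\,dv$; this is obtained from the uniform energy bound, the finite second moment in $v$, and equicontinuity in time read off from the equation. Once the limit is identified, the weak formulation and the class \eqref{G-class-1000}--\eqref{G-class-3000} follow by lower semicontinuity and standard trace arguments.
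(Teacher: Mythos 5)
Your outline is essentially the route the paper itself intends: the paper omits the proof and states that it repeats the arguments of \cite{HKKP18}, which is precisely the scheme you describe (regularized data and truncated drag, Picard iteration with the linear Vlasov equation solved by characteristics and the power-law fluid solved by Galerkin/monotone-operator theory, the coupled energy--moment estimate giving the bounds \eqref{G-class-1000}--\eqref{G-class-3000}, local Aubin--Lions compactness adapted to $\bbr^3$, and the Minty--Browder trick via \eqref{monotonicity} to identify $G[|Du|^2]Du$). One small correction: since the drag term carries the factor $3$, the particle kinetic energy must be weighted as $\tfrac{3}{2}|v|^2$ rather than $\tfrac{1}{2}|v|^2$ for the coupling terms to combine into $-3\iint |u-v|^2 f\,dv\,dx$; this changes only constants and none of your bounds.
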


Our main objective is to prove existence of strong solutions for the
system \eqref{main}-\eqref{initial} where strong solutions are
defined below (see Definition \ref{strong}). More precisely, we
establish the well-posedness of strong solutions locally in time
(Theorem \ref{thm1}).


One of main tools is to use Schauder fixed point theorem via the
weighted estimate for $f$ (Lemma \ref{well-f-without-support}
below). In particular, we use the optimal transport technique to
show the stability of solutions to  the Vlasov equation with respect
to the velocity vector $u$ as well as the existence of solutions to
the Vlasov equation. To prove the stability of solutions, we borrow
the idea of the paper of Han-Kwan et. al. \cite{HMMM17}. However,
unlike \cite{HMMM17}, we exploit estimates on the Wasserstein
distance instead of Loeper's functional.

We introduce the notion of strong solutions to
\eqref{main}-\eqref{initial}.

\begin{definition}\label{strong}
Let $T>0$ and $k\geq3$. We say that $(f, u)$ is a strong solution to
\eqref{main}-\eqref{initial} if the following conditions are
satisfied:
\begin{itemize}
\item[(i)]$f(x,\xi,t) \geq 0$ for all $(x,\xi,t)\in \bbr^3\times
\bbr^3\times (0,T]$.
\item[(ii)] $
(1+|v|^{k})\nabla^\alpha_v\nabla_x^\beta f\in L^{2}(\bbr^3\times
\bbr^3)$, where $0\le |\alpha|+|\beta|\leq 2$.
\item[(iii)]$u \in C(0, T ;
H^3(\bbr^3))\cap L^{2}(0, T ; H^4(\bbr^3))$.
\item[(iv)] $(f,u)$ solves the equations \eqref{main} - \eqref{initial} in the
pointwise sense.
\end{itemize}
\end{definition}

Now, we are ready to state the main result.

\begin{theorem} \label{thm1}
Let $k\geq3$. Suppose that
\[
(1+|x|^2+|v|^2)f_0(x,v) \in L^1(\R^3\times \R^3),\quad u_0\in H^3(\bbr^3).
\]
Assume further that $f_0(x, v)\geq0$ satisfies for $p> \max\bket{5,\frac{2k+3}{2}}$
\begin{equation}\label{Condition-1 : f_0-main}
(1+|v|^p)\nabla_v^\alpha\nabla_x^\beta f_0 \in  L_v^2
L_x^2(\R^3\times \R^3),\qquad 0\le |\alpha|+|\beta|\leq 2.
\end{equation}
Then, there exists $T_*>0$ such that the system
\eqref{main}-\eqref{initial} has a unique strong solution $(f, u)$
on the time-interval $(0, T_*]$ in the sense of Definition
\ref{strong}.
\end{theorem}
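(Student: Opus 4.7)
The plan is to construct the strong solution via a Schauder fixed point argument, linearizing the coupling between the Vlasov equation and the non-Newtonian fluid equation. I would fix a closed convex set
\[
\mathcal{K}_{M,T_*}=\Big\{u\in C([0,T_*];H^3(\bbr^3)):\ \mathrm{div}\,u=0,\ u(0)=u_0,\ \|u\|_{L^\infty_tH^3_x}+\|u\|_{L^2_tH^4_x}\leq M\Big\}
\]
for some $M>2\|u_0\|_{H^3}$, and define a map $\Phi:\mathcal{K}_{M,T_*}\to \mathcal{K}_{M,T_*}$ in two stages. Given $u\in \mathcal{K}_{M,T_*}$, I would first solve the linear Vlasov equation $\partial_t f+v\cdot\nabla_xf+\nabla_v\cdot[(u-v)f]=0$ with $f(0)=f_0$ by the method of characteristics; since $u\in L^\infty_tW^{1,\infty}_x$ by Sobolev embedding in three dimensions, the flow is well defined and $f(t,x,v)=e^{3t}f_0(X_0,V_0)$ along trajectories. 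The weighted estimate (Lemma \ref{well-f-without-support} mentioned in the introduction) then propagates the weight $(1+|v|^p)$ in $L^\infty_vL^2_x$ for derivatives up to order two, which is exactly why the hypothesis \eqref{Condition-1 : f_0-main} is formulated with $p>\max\{5,(2k+3)/2\}$: the surplus $p-(2k+3)/2$ allows one to absorb the loss of $v$-decay incurred from the transport term $v\cdot\nabla_x$ and from the $x$-derivatives of $u$ when differentiating the characteristic map.

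In the second stage, given this $f$, I form the drag source $F(t,x):=-3\int_{\bbr^3}(u-v)f\,dv$ and solve the quasilinear parabolic problem
\[
\partial_t\tilde u-\nabla\cdot\bke{G[|D\tilde u|^2]D\tilde u}+(\tilde u\cdot\nabla)\tilde u+\nabla p=F,\qquad \mathrm{div}\,\tilde u=0,\qquad \tilde u(0)=u_0,
\]
to obtain $\tilde u:=\Phi(u)$. The structure condition $G[s]+2G'[s]s\geq m_0$ gives uniform ellipticity of the linearized principal part, so standard quasilinear parabolic theory yields local existence in $C([0,T_*];H^3)\cap L^2(0,T_*;H^4)$, with energy estimates at the $H^3$ level obtained by commutator estimates using $|G^{(k)}[s]s^\alpha|\leq C_k|G^{(k-1)}[s]|$ from Assumption \ref{G-assume}. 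The key point is that on a sufficiently short time $T_*=T_*(M,\|u_0\|_{H^3},\|f_0\|_*)$ one recovers the bound defining $\mathcal{K}_{M,T_*}$ for $\tilde u$, so $\Phi$ maps $\mathcal{K}_{M,T_*}$ into itself. The force $F$ is controlled in $L^2_tH^2_x$ using the moment bound on $|v|^2 f$ together with the weighted derivative estimate on $f$, which is exactly where the $L^\infty_vL^2_x$ norms enter.

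To conclude via Schauder's theorem, I need compactness and continuity of $\Phi$. Compactness follows from Aubin--Lions applied to the image set: $\Phi(u)$ lies in $L^2(0,T_*;H^4)$ with $\partial_t\Phi(u)\in L^2(0,T_*;H^1)$, giving compact embedding into $C([0,T_*];H^3_{\mathrm{loc}})$. Continuity of $\Phi$ in the $C([0,T_*];H^2)$ topology is obtained by comparing two iterates: stability of the Vlasov equation with respect to the advecting field is quantified via the Wasserstein distance $W_1$ (following the strategy of Han-Kwan et al.\ but replacing Loeper's functional by a direct characteristic-flow estimate), which in turn controls $\|F_1-F_2\|_{L^2}$ by $\|u_1-u_2\|_{L^\infty_tH^3_x}$ times weighted moments of $f$; then the monotonicity of the non-Newtonian operator supplies the stability of $\tilde u$ in $L^\infty_tL^2_x\cap L^2_tH^1_x$. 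The same estimate, closed by Gr\"onwall, yields uniqueness.

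The main obstacle I expect is closing the weighted $H^2_{x,v}$-type estimates for $f$ simultaneously with the $H^3$ energy estimate for $u$: the top-order fluid estimate demands $F\in L^2_tH^2_x$, which forces second-order $x$-derivatives of $f$ to have sufficient $v$-decay, and each $x$-derivative of $f$ loses one power of $v$ through the characteristic map $\nabla_x X(t)\sim t\,\mathrm{Id}$ coupled with $\nabla u$. Balancing this loss against the weight $(1+|v|^p)$ and the polynomial growth of moments $\int|v|^m f\,dv$ bounded by $\|(1+|v|^p)f\|_{L^\infty_vL^2_x}$ (valid precisely for $p>(2k+3)/2$ by H\"older in $v$) is the delicate bookkeeping that determines the admissible range of $p$ and the lifespan $T_*$.
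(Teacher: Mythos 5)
Your proposal follows essentially the same route as the paper's proof: a Schauder fixed point argument on the ball $X_M\subset L^\infty(0,T;H^3)\cap L^2(0,T;H^4)$, solving the linear Vlasov equation along characteristics with weighted $L^2$ estimates (the paper's Lemma \ref{well-f-without-support} and Proposition \ref{vlasov_pro}), solving the fluid equation with the drag force controlled in $L^2_tH^2_x$ (Lemma \ref{local-ex-u}), compactness via Aubin--Lions, and continuity plus uniqueness via a Wasserstein-distance stability estimate for the Vlasov flow feeding a contraction in $L^2_tL^2_x$. The minor deviations --- keeping the convection term quasilinear instead of linearizing it with $\bar{u}$, invoking $W_1$ rather than $W_2$, and phrasing continuity in a slightly stronger topology --- do not constitute a genuinely different argument.
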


\begin{remark}
We note that the local time $T^*$ in Theorem \ref{thm1} is dependent
on the size of data in \eqref{Condition-1 : f_0-main} and
$\|u_0\|_{H^3(\R^3)}$.
\end{remark}
\begin{remark}\label{imply_condition}
One can observe that the
assumption \eqref{Condition-1 : f_0-main} implies the following: 
\begin{itemize}
\item[$\bullet$] $f_0(x,v)  \leq  \frac{C_2}{1+|v|^p}$ due to the embedding w.r.t x-
variable (see Section 3.1.2).
\item[$\bullet$] $(1+|v|^k)\nabla_v^\alpha\nabla_x^\beta f_0 \in  L^2(\R^3\times \R^3)$ if
$k<(2p-3)/2$
by the H\"{o}lder inequality (see Proposition \ref{vlasov_pro}
below).
\end{itemize}
These will be used later in the proof of Theorem \ref{thm1}.
\end{remark}

%
%

\begin{remark}
The novelty of this paper is that, compared to known results, one of
main difficulties is of course caused by nonlinear structure of
viscous part of fluid, which we successfully controlled somehow.
Another improvement is that initial data of Vlasov equation is not
assumed to be compactly supported, which makes arguments a bit
complicated for solvability of the Vlasov equation. To the knowledge
of authors, previous results seem to suppose compactly supported
initial data, and we do not know, however, if the decay condition
\eqref{Condition-1 : f_0-main} is optimal or not.
\end{remark}

This paper is organized as follows: In Section 2, we review some
preliminary results. Section 3 is devoted to the study of Vlasov
equation. In section 4, we prove Theorem \ref{thm1} is presented. In Section 5, the convergence of two strong
solutions of the fluid equation involving the drag force is shown.

\section{Preliminary}

We first introduce some notations. Let $(X,\|\cdot\|)$ be a normed
space. By $L^q(0,T;X)$, we denote the space of all Bochner
measurable functions $\varphi:(0,T)\rightarrow X$ such that
\begin{equation*}
\left\{
\begin{array}{ll}
\|\varphi\|_{L^q(0,T;X)}:=\Big(\int_0^T \norm{\varphi(t)}^qdt\Big)^{\frac{1}{q}}<\infty, \hspace{1.5cm}\mbox{if} \quad 1\leq q<\infty,\\
\vspace{-3mm}\\
\|\varphi\|_{L^{\infty}(0,T;X)}:=\displaystyle\sup_{t\in (0,T)}\norm{\varphi(t)}<\infty, \quad\quad\ \mbox{if} \quad  q=\infty.\\
\end{array}\right.
\end{equation*}
 For $1\leq q\leq \infty$, we mean by
$W^{k,q}(\bbr^3)$ the usual Sobolev space. In particular, for $q=2$,
we write $W^{k,q}(\bbr^3)$ as $H^{k}(\bbr^3)$. Let
$A=(a_{ij})_{i,j=1}^3$ and $B=(b_{ij})_{i,j=1}^3$ be matrix valued
maps, we then denote
\[
A : B = \sum_{i,j=1}^3 a_{ij}b_{ij},\qquad \nabla A : \nabla B
=\sum_{i,j=1}^3 \nabla a_{ij} \cdot \nabla b_{ij}, \qquad \nabla^2 A
: \nabla^2 B = \sum_{i,j=1}^3 \nabla^2 a_{ij} : \nabla^2 b_{ij}.
\]
The letter $C$ is used to represent a generic constant, which may
change from line to line.

Next we recall the Aubin-Lions Lemma, which will be used for compactness (see e.g.
\cite{Te}).
\begin{lemma}\label{lion}
 Let $\mathcal{B}$ be a Banach space and
$\mathcal{B}_i$, $i=0,1$, separable, reflexive Banach spaces, and
suppose that
\[  \mathcal{B}_0\hookrightarrow\hookrightarrow
\mathcal{B}\hookrightarrow \mathcal{B}_1, \] where $\hookrightarrow$
denotes continuous imbedding and $\hookrightarrow\hookrightarrow$
compact imbedding. Let
\[
W :=\{v \in L^{p_0}(I;\mathcal{B}_0)\quad \text{with} \quad
\frac{dv}{dt}\in L^{p_1}(I;\mathcal{B}_1)\}
\]
for some finite interval $I \subset \bbr$, where $p_i$ satisfying
$1<p_i<\infty$. Then, we have
 \[ W \hookrightarrow\hookrightarrow
L^{p_0}(I;\mathcal{B}). \]
\end{lemma}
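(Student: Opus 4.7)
The plan is to prove this classical compactness result via Ehrling's interpolation inequality combined with reflexivity and a time-mollification argument. The first step is to establish an Ehrling-type inequality: by a standard contradiction argument using the compact embedding $\mathcal{B}_0\hookrightarrow\hookrightarrow \mathcal{B}$ and the continuous embedding $\mathcal{B}\hookrightarrow \mathcal{B}_1$, for every $\eta>0$ there exists $C_\eta>0$ such that $\|u\|_{\mathcal{B}}\leq \eta\|u\|_{\mathcal{B}_0}+C_\eta\|u\|_{\mathcal{B}_1}$ for all $u\in \mathcal{B}_0$. (Sketch: otherwise $\|u_n\|_{\mathcal{B}_0}=1$ with $\|u_n\|_{\mathcal{B}}>\eta_0+n\|u_n\|_{\mathcal{B}_1}$; the compact embedding extracts a subsequence convergent in $\mathcal{B}$, but $u_n\to 0$ in $\mathcal{B}_1$, forcing the limit to vanish in $\mathcal{B}$, contradiction.) Composing the two given embeddings also yields the auxiliary compact embedding $\mathcal{B}_0\hookrightarrow\hookrightarrow \mathcal{B}_1$, which will be crucial later.

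Next, take a bounded sequence $(v_n)\subset W$ with $\|v_n\|_{L^{p_0}(I;\mathcal{B}_0)}+\|v_n'\|_{L^{p_1}(I;\mathcal{B}_1)}\leq M$. Since $1<p_0,p_1<\infty$ and the $\mathcal{B}_i$ are reflexive, the Bochner spaces $L^{p_i}(I;\mathcal{B}_i)$ are reflexive, so after passing to a subsequence we have $v_n\rightharpoonup v$ in $L^{p_0}(I;\mathcal{B}_0)$ and $v_n'\rightharpoonup v'$ in $L^{p_1}(I;\mathcal{B}_1)$. Replacing $v_n$ by $v_n-v$, I may assume $v\equiv 0$, and integrating the Ehrling inequality in $t$ gives
\[
\|v_n\|_{L^{p_0}(I;\mathcal{B})}^{p_0}\leq C\bigl(\eta^{p_0}M^{p_0}+C_\eta^{p_0}\|v_n\|_{L^{p_0}(I;\mathcal{B}_1)}^{p_0}\bigr).
\]
The problem thus reduces to proving $v_n\to 0$ strongly in $L^{p_0}(I;\mathcal{B}_1)$.

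To establish this strong convergence I would introduce the time mollification $v_n^h(t)=h^{-1}\int_t^{t+h}v_n(s)\,ds$ (with a cut-off near the endpoints of $I$). For each fixed $h>0$, H\"older's inequality shows $v_n^h(t)$ is bounded in $\mathcal{B}_0$ uniformly in $n$ and $t\in I$, while the pointwise estimate $\|v_n(t+h)-v_n(t)\|_{\mathcal{B}_1}\leq h^{1-1/p_1}\|v_n'\|_{L^{p_1}(I;\mathcal{B}_1)}$ makes $\{v_n^h\}_n$ equicontinuous in $t$ with values in $\mathcal{B}_1$. Using the auxiliary compact embedding $\mathcal{B}_0\hookrightarrow\hookrightarrow \mathcal{B}_1$, an Arzel\`a--Ascoli argument yields, for each $h$, a subsequence of $v_n^h$ converging strongly in $C(\bar I;\mathcal{B}_1)$; the weak-limit identification forces the limit to be $0$. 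Since the approximation bound
\[
\|v_n-v_n^h\|_{L^{p_0}(I;\mathcal{B}_1)}\leq C\,h^{1-1/p_1}M
\]
is uniform in $n$, a diagonal extraction as $h\downarrow 0$ produces a subsequence with $v_n\to 0$ in $L^{p_0}(I;\mathcal{B}_1)$. Substituting into the displayed Ehrling estimate, letting $n\to\infty$ and then $\eta\to 0$, concludes $v_n\to 0$ in $L^{p_0}(I;\mathcal{B})$, which is the asserted compact embedding.

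The main obstacle is the mollification step: producing strong convergence in $L^{p_0}(I;\mathcal{B}_1)$ requires carefully balancing the smoothing in time (which supplies $\mathcal{B}_0$-boundedness, hence pointwise precompactness in $\mathcal{B}_1$ via the auxiliary compact embedding) against the approximation error controlled by the $L^{p_1}$-bound on $v_n'$. This interplay between temporal regularity and spatial compactness is the genuine content of the Aubin--Lions lemma; the Ehrling step and the final Ehrling-based combination are routine once this intermediate strong convergence is in hand.
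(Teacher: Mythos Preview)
The paper does not actually prove this lemma; it is stated as a preliminary result with only a reference to Temam's book, so there is no ``paper's own proof'' to compare against. Your sketch is the standard Ehrling-plus-mollification argument for the Aubin--Lions lemma and is correct in outline: the Ehrling reduction, the weak-compactness extraction via reflexivity of the Bochner spaces, the time-average $v_n^h$ giving uniform-in-$t$ $\mathcal{B}_0$-bounds for fixed $h$, equicontinuity in $\mathcal{B}_1$ from the $L^{p_1}$ bound on $v_n'$, and the approximation error $\|v_n-v_n^h\|_{L^{p_0}(I;\mathcal{B}_1)}\lesssim h^{1-1/p_1}M$ all work as you indicate. One minor remark: the diagonal extraction over $h$ is unnecessary, since for fixed $h$ and each $t$ the map $v\mapsto h^{-1}\int_t^{t+h}v$ is weak-to-weak continuous from $L^{p_0}(I;\mathcal{B}_0)$ to $\mathcal{B}_0$, so $v_n^h(t)\rightharpoonup 0$ in $\mathcal{B}_0$ and hence $v_n^h(t)\to 0$ in $\mathcal{B}_1$ for the whole (already extracted) sequence; dominated convergence then gives $v_n^h\to 0$ in $L^{p_0}(I;\mathcal{B}_1)$ directly, and you may simply let $h\to 0$ at the end.
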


We also remind a version of Schauder's fixed point theorem.
\begin{lemma}\label{Schauder}
Let $V$ be a Banach space and $K$ be a nonempty convex
closed subset of $V$. If $T : K \rightarrow K$ is a continuous
mapping and $T(K)$ is contained in a compact subset of $K$, then $T$
has a fixed point.
\end{lemma}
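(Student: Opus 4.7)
The plan is to reduce the statement to Brouwer's fixed point theorem via a finite-dimensional approximation by Schauder projections. Let $C$ denote a compact subset of $K$ with $T(K) \subset C$. For each positive integer $n$, the compactness of $C$ lets me cover it by finitely many open balls $B(y_i^n, 1/n)$, $i=1,\ldots,m_n$, with centers $y_i^n \in C \subset K$. Using the associated partition-of-unity weights $\mu_i^n(x) := \max\{0,\, 1/n - \|x - y_i^n\|\}$, I would define the \emph{Schauder projection}
\[
P_n : C \longrightarrow K_n := \mathrm{conv}\{y_1^n,\ldots,y_{m_n}^n\}, \qquad P_n(x) := \frac{\sum_{i=1}^{m_n} \mu_i^n(x)\, y_i^n}{\sum_{i=1}^{m_n} \mu_i^n(x)}.
\]
The denominator never vanishes because of the covering, $P_n$ is continuous, and a direct triangle-inequality computation yields $\|P_n(x) - x\| \leq 1/n$ for every $x \in C$. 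Convexity of $K$ together with $y_i^n \in K$ ensures $K_n \subset K$.

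Next I would consider the composition $T_n := P_n \circ T$ restricted to $K_n$. Since $T(K_n) \subset T(K) \subset C$, the map $T_n$ carries $K_n$ into $K_n$ continuously, and $K_n$ is a compact convex subset of the finite-dimensional subspace $\mathrm{span}\{y_1^n,\ldots,y_{m_n}^n\}$. Brouwer's fixed point theorem then produces $x_n \in K_n$ with $x_n = P_n(T(x_n))$.

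To conclude, I pass to the limit using compactness. Since each $T(x_n)$ lies in $C$, some subsequence $T(x_{n_k})$ converges to a limit $x^* \in C \subset K$. The identity $x_n = P_n(T(x_n))$ combined with the pointwise estimate for the Schauder projection yields
\[
\|x_{n_k} - T(x_{n_k})\| = \|P_{n_k}(T(x_{n_k})) - T(x_{n_k})\| \leq \tfrac{1}{n_k} \longrightarrow 0,
\]
so $x_{n_k} \to x^*$ as well. Continuity of $T$ forces $T(x_{n_k}) \to T(x^*)$, and hence $T(x^*) = x^*$.

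The main obstacle, and really the only non-routine ingredient, is Brouwer's fixed point theorem itself, which supplies the genuine topological content. Everything else is a standard finite-dimensional approximation, but some care is required at the passage-to-the-limit step: one cannot directly extract a convergent subsequence of the $x_n$ inside the (possibly non-compact) set $K$, so instead I extract a convergent subsequence of $T(x_n) \in C$ and then transfer the convergence back to $x_n$ via the projection error bound $\|x_n - T(x_n)\| \leq 1/n$.
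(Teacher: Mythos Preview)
Your argument is the standard Schauder-projection reduction to Brouwer's theorem and is correct as written; the estimate $\|P_n(x)-x\|\le 1/n$, the application of Brouwer on the finite-dimensional convex compact $K_n$, and the compactness extraction via $T(x_n)\in C$ are all handled properly.

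There is, however, nothing to compare: the paper does not prove this lemma at all. It is stated in the preliminaries as a reminder of a classical result (``We also remind a version of Schauder's fixed point theorem'') and is simply quoted without proof or reference, to be applied later in the proof of Theorem~\ref{thm1}. So your proposal supplies a full proof where the paper gives none.
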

\smallskip

Next, we introduce a priori estimate, which is one of key estimates
involving third derivatives of $u$ (refer \cite[Lemma 2.1]{KKK1}).
\begin{lemma}\label{deriv-G}
Let $l$ be a positive integer, $\tilde{\sigma}_l :\{1,2,\cdots,
l\}\rightarrow \{1,2,\cdots, l \}$ a permutation of $\{1,2,\cdots, l
\}$, and $\pi_l$ a mapping from $\{1,2,\cdots, l\}$ to $\{1,2,3\}$.
Suppose that $u\in C^{\infty}(\R^3)\cap H^l(\R^3)$. Assume further
that $G : [0,\infty) \rightarrow [0,\infty)$ is infinitely
differentiable and satisfies properties given in \eqref{G :
property}. Then, the multi-derivative of $G$ can be rewritten as the
following decomposition:
\begin{equation*}\label{eq-1 : derive G}
\partial_{x_{\sigma_l(l)}}\partial_{x_{\sigma_l(l-1)}}
\cdots\partial_{x_{\sigma_l(1)}}G\bkt{|Du|^2} = 2\big (G'\bkt{|Du|^2} Du:
\partial_{x_{\sigma_l(l)}}\partial_{x_{\sigma_l(l-1)}}
\cdots\partial_{x_{\sigma_l(1)}}D u \big) + E_l,
\end{equation*}
where $\sigma_l:=\pi_l\circ\tilde{\sigma}_l$ and
\begin{equation*}\label{eq-2 : derive G}
E_l=2\big(\partial_{x_{\sigma(l)}} (G^{'}[|Du|^2]Du):
\partial^{l-1} Du \big) +
\partial_{x_{\sigma(l)}} E_{l-1},\qquad \quad E_1=0,
\end{equation*}
where
$\partial^{l-1}:=\partial_{x_{\sigma(l-1)}}\cdots\partial_{x_{\sigma(1)}}$.
Furthermore, we obtain the following.

\noindent $(1)$ $E_2$ and $E_3$ satisfy
\begin{equation}\label{eq-5 : derive G}
\begin{aligned}
|E_2| &\leq CG\bkt{|Du|^2}|\nabla Du|^2, \\
|E_3| &\leq CG\bkt{|Du|^2}\big(|\nabla Du|^3 +|\nabla^2 Du||\nabla Du| \big). \\
\end{aligned}
\end{equation}

\noindent $(2)$ For $1\leq \alpha \leq l$
\[
\norm{ \partial^{\alpha}G\bkt{|Du|^2}~ \partial^{l-\alpha}
Du}_{L^2}+\norm{E_\alpha
\partial^{l-\alpha}Du }_{L^2}
\]
\begin{equation}\label{eq-3 : derive G}
\leq C \norm{G[ |Du|^2 ]}_{L^\infty} \bke{ \norm{ Du}_{L^\infty} + \norm{
Du}_{L^\infty}^\alpha }\norm{\nabla^l Du}_{L^2}.
\end{equation}
\end{lemma}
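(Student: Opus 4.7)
The plan is to establish the decomposition by induction on $l$ and then to estimate the error terms $E_l$ together with the products in \eqref{eq-5 : derive G} and \eqref{eq-3 : derive G} by careful application of the Leibniz and chain rules, combined with the structure conditions \eqref{G : property}. For the decomposition, the base case $l=1$ is the classical chain rule $\partial_{x_{\sigma(1)}}G[|Du|^2] = 2 G'[|Du|^2] Du : \partial_{x_{\sigma(1)}}Du$, so $E_1 = 0$. For the inductive step I would differentiate the decomposition at level $l-1$ by $\partial_{x_{\sigma(l)}}$ and apply the Leibniz rule to the inner product $G'[|Du|^2]Du : \partial^{l-1}Du$. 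This produces exactly the principal term $2(G'[|Du|^2]Du : \partial^l Du)$ plus the product $2(\partial_{x_{\sigma(l)}}(G'[|Du|^2]Du) : \partial^{l-1}Du)$; together with the already-differentiated $\partial_{x_{\sigma(l)}}E_{l-1}$, these match the recursive definition of $E_l$.

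For the pointwise bounds on $E_2$ and $E_3$, I would compute $E_2 = 2\partial_{x_{\sigma(2)}}(G'[|Du|^2]Du) : \partial_{x_{\sigma(1)}}Du$ directly. Expanding by the chain rule yields two types of terms, $G''[|Du|^2]|Du|^2|\nabla Du|^2$ and $G'[|Du|^2]|\nabla Du|^2$. Applying the structure condition $|G^{(k)}[s]s^\alpha|\leq C_k|G^{(k-1)}[s]|$ twice (first with $k=2$, $\alpha=1$, then with $k=1$, $\alpha=1$, where $s=|Du|^2$) bounds both by $CG[|Du|^2]|\nabla Du|^2$. The bound on $E_3$ follows the same template: expand $E_3 = 2\partial_{x_{\sigma(3)}}(G'[|Du|^2]Du) : \partial^2 Du + \partial_{x_{\sigma(3)}}E_2$ and collect the resulting terms either as three copies of $\nabla Du$ or as a single $\nabla Du$ paired with $\nabla^2 Du$, reducing the differentiated prefactors to multiples of $G[|Du|^2]$ via the same structural inequalities.

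For the $L^2$ inequality \eqref{eq-3 : derive G}, I would write $\partial^\alpha G[|Du|^2]$ through Fa\`a di Bruno as a sum over partitions $\alpha = \alpha_1 + \cdots + \alpha_j$, $1\leq j\leq \alpha$, of monomials of the form $G^{(j)}[|Du|^2]\prod_i \partial^{\alpha_i}|Du|^2$. Each $\partial^{\alpha_i}|Du|^2$ splits into products of two derivatives of $Du$ whose orders sum to $\alpha_i$. Applying the structure condition iteratively converts $|G^{(j)}[|Du|^2]|$ into $C\|G[|Du|^2]\|_{L^\infty}$ after absorbing $2j$ factors of $|Du|$, leaving a product of derivatives of $Du$ of total order $\alpha$, together with the absorbed copies of $Du$. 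I would then interpolate in $L^2$ by extracting $\|Du\|_{L^\infty}$ from all but one factor and invoking the Gagliardo-Nirenberg inequality on the remaining factor, arranging for exactly one appearance of $\|\nabla^l Du\|_{L^2}$. This yields the asserted scaling $(\|Du\|_{L^\infty} + \|Du\|_{L^\infty}^\alpha)\|\nabla^l Du\|_{L^2}$. The term $E_\alpha \partial^{l-\alpha}Du$ admits the same treatment because $E_\alpha$ inherits an analogous decomposition from its recursive construction.

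The main obstacle is the combinatorial bookkeeping: one must precisely match the number of $|Du|$ factors absorbed by the structure condition to the derivative order $j$ of $G$, and simultaneously balance the Gagliardo-Nirenberg interpolation so that the norm is exactly $\|\nabla^l Du\|_{L^2}$ while the prefactor depends only on $\|Du\|_{L^\infty}$. Homogeneity, together with the identity $\alpha_1 + \cdots + \alpha_j = \alpha$, ensures that this balance can always be achieved, and it is exactly where the dependence $\|Du\|_{L^\infty} + \|Du\|_{L^\infty}^\alpha$ emerges: the extreme case $j=1$ contributes the linear power, while $j=\alpha$ contributes the $\alpha$-th power.
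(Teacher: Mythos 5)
Your proposal is essentially correct, and it is worth noting that the paper itself gives no proof of this lemma: it is imported by citation from [KKK1, Lemma 2.1], so there is no in-paper argument to compare against. Your route --- induction on $l$ via the Leibniz rule for the decomposition, direct expansion plus the structure conditions \eqref{G : property} for the pointwise bounds \eqref{eq-5 : derive G}, and Fa\`a di Bruno combined with Gagliardo--Nirenberg for \eqref{eq-3 : derive G} --- is the natural and standard one, and the homogeneity bookkeeping you describe does close: each undifferentiated $Du$ sitting next to $G^{(j)}$ is absorbed by the hypothesis $|G^{(k)}[s]s^{\alpha}|\leq C_k|G^{(k-1)}[s]|$, and the Moser-type product estimate on the remaining derivative factors (total order $l$ spread over at most $\alpha+1$ factors) produces exactly the powers $\norm{Du}_{L^\infty}^m$, $1\leq m\leq \alpha$, which are dominated by $\norm{Du}_{L^\infty}+\norm{Du}_{L^\infty}^{\alpha}$. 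Three small points should be tightened in a full write-up. First, in the $E_2$ bound the second reduction is $|G'[s]|\leq C\,G[s]$, i.e.\ the structure condition with exponent $0$, not $1$ as you wrote. Second, when the number of undifferentiated $Du$ factors adjacent to $G^{(j)}$ is odd, the leftover single $|Du|$ cannot be absorbed as a power of $s=|Du|^2$ directly; one needs the elementary case split $|Du|\leq 1$ versus $|Du|\geq 1$ (or a majorization by an even power), since pulling it out as $\norm{Du}_{L^\infty}$ could overshoot the allowed power $\alpha$. Third, the phrase ``extract $\norm{Du}_{L^\infty}$ from all but one factor and apply Gagliardo--Nirenberg to the remaining one'' is not literally how the estimate goes: one applies the interpolation $\norm{\nabla^{\beta}Du}_{L^{2l/\beta}}\leq C\norm{Du}_{L^\infty}^{1-\beta/l}\norm{\nabla^{l}Du}_{L^2}^{\beta/l}$ to every factor and uses H\"older, the exponents summing correctly precisely because the derivative orders sum to $l$. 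None of these is a genuine gap; they are routine repairs.
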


Next, we also introduce a monotonicity property of the viscous part of the stress
tensor, which is useful for the uniqueness of strong solutions for
fluid equations (see \cite[Lemma 2.2]{KKK1}).

\begin{lemma}\label{r-400}
Let $v,w \in W^{1,2}(\bbr^3)$. Under the Assumptions \ref{G-assume} on $G$, we have
\begin{equation*}\label{eq 2 : r-400}
m_0\norm{ Dv-Dw}^2_{L^2(\bbr^3)}\leq
\int_{\bbr^3}\bke{ G\bkt{|Dv|^2}Dv-G\bkt{|Dw|^2}Dw }: (Dv-Dw)\,dx,
\end{equation*}
where $m_0$ is a positive constant in \eqref{G : property}.
\end{lemma}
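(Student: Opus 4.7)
The plan is to reduce the claim to a pointwise monotonicity estimate for the nonlinear map $F(A) := G[|A|^2]\,A$ acting on symmetric $3\times 3$ matrices, and then integrate it over $\bbr^3$. Specifically, I will show that for any pair of symmetric matrices $A, B$,
\[
\bke{F(A) - F(B)} : (A - B) \ge m_0\,|A - B|^2,
\]
from which the lemma follows by substituting $A = Dv(x)$, $B = Dw(x)$ and integrating in $x$.

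To prove the pointwise bound, I will linearize $F$ along the line segment joining $B$ to $A$. Setting $A_t := B + t(A - B)$ and $H := A - B$, smoothness of $G$ allows me to write
\[
F(A) - F(B) = \int_0^1 \frac{d}{dt}\bke{G[|A_t|^2]\, A_t}\,dt = \int_0^1 \bke{2 G'[|A_t|^2]\,(A_t : H)\, A_t + G[|A_t|^2]\, H}\,dt,
\]
and after contracting with $H$ the integrand becomes
\[
2 G'[|A_t|^2]\,(A_t : H)^2 + G[|A_t|^2]\,|H|^2.
\]
The proof now splits according to the sign of $G'[|A_t|^2]$. If $G'[|A_t|^2] \ge 0$, then the first summand is nonnegative and the first estimate in \eqref{G : property} directly yields the pointwise lower bound $m_0 |H|^2$. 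If instead $G'[|A_t|^2] < 0$, the Cauchy--Schwarz inequality $(A_t : H)^2 \le |A_t|^2\,|H|^2$ reverses upon multiplication by the negative factor $2 G'[|A_t|^2]$, giving
\[
2 G'[|A_t|^2](A_t : H)^2 + G[|A_t|^2]|H|^2 \ge \bke{G[|A_t|^2] + 2 G'[|A_t|^2]|A_t|^2}|H|^2 \ge m_0 |H|^2
\]
by the second estimate in \eqref{G : property}. Thus the integrand is bounded below by $m_0 |H|^2$ uniformly in $t \in [0,1]$.

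Integrating this pointwise lower bound in $t$ over $[0,1]$ recovers the monotonicity inequality $(F(A) - F(B)) : (A - B) \ge m_0 |A - B|^2$, and a final integration in $x \in \bbr^3$, valid because $v, w \in W^{1,2}(\bbr^3)$ ensures $|Dv - Dw|^2 \in L^1(\bbr^3)$, delivers the lemma. The argument is short, and the only delicate point is the case split on the sign of $G'$, which is precisely the reason Assumption \ref{G-assume} is formulated through the combined condition $G[s] + 2 G'[s]\,s \ge m_0$ rather than through isolated sign conditions on $G$ and $G'$.
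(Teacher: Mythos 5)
Your argument is correct: the segment linearization $A_t=B+t(A-B)$ reduces the claim to the pointwise bound $2G'[|A_t|^2](A_t:H)^2+G[|A_t|^2]|H|^2\ge m_0|H|^2$, and your case split on the sign of $G'$ is exactly the inequality \eqref{app:eq-1 : G} established in the paper's Appendix (the paper itself defers the proof of this lemma to \cite[Lemma 2.2]{KKK1}, which proceeds in the same standard way). Since the integrand on the right-hand side is pointwise bounded below by $m_0|Dv-Dw|^2\ge 0$, the integration over $\bbr^3$ is legitimate for $v,w\in W^{1,2}(\bbr^3)$, so nothing is missing.
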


\subsection{Review on Maximal functions}

Let us first remind the notion of Maximal functions. For every $g\in
L^1_{loc}(\bbr^3)$, the associated maximal function, denoted $Mg$,
is defined by
\begin{equation*}
Mg(x) := \sup_{ r>0}\frac{1}{|B_r(x)|} \int_{B_r(x)} \abs{g(y)} dy,
~~\mbox{for a.e.} ~ x \in \bbr^3,
\end{equation*}
where $B_r(x)$ is the ball of radius $r$ centered at $x$.

It is well known that if $g\in L^p(\bbr^3)~(1<p\leq \infty)$ then so is $Mg$, and there is a constant $C_p>0$ such that
\begin{equation}\label{eq 2 : maximal function}
 \norm{Mg }_p \leq C_p \norm{g}_p .
\end{equation}
 We will use the following property of the maximal function (see \cite[Lemma 3]{A-F}).

\begin{lemma}

 If $g \in W^{1,p}(\bbr^3)$ for $p \geq 1$ then, for a.e. $x, y \in \bbr^3$ one has
\begin{equation}\label{eq 1 : maximal function}
|g(x) - g(y)| \lesssim |x - y|\bke{M \nabla g(x) +M \nabla g(y)}.
\end{equation}

\end{lemma}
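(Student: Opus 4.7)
The plan is to reduce the pointwise estimate to controlling the deviation of $g$ from local averages at Lebesgue points. Set $r:=|x-y|$ and, for any ball $B\subset\bbr^3$, denote the mean value $g_B:=\frac{1}{|B|}\int_B g(w)\,dw$. First I would fix $x$ and $y$ that are simultaneously Lebesgue points of $g$ (which occurs for a.e.\ pair $(x,y)$ by Fubini applied to the full-measure set of Lebesgue points), so that by the Lebesgue differentiation theorem $g(x)=\lim_{k\to\infty}g_{B_{r/2^k}(x)}$. This legitimizes the telescoping identity
\[
g(x) - g_{B_r(x)} = \sum_{k=0}^\infty \bke{g_{B_{r/2^{k+1}}(x)} - g_{B_{r/2^k}(x)}},
\]
and analogously for $g(y)-g_{B_r(y)}$.

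The key step is to bound each telescoping term via the Poincar\'e inequality on $B_{r/2^k}(x)$. Since $B_{r/2^{k+1}}(x)\subset B_{r/2^k}(x)$ with dimensional volume ratio,
\[
\bkv{g_{B_{r/2^{k+1}}(x)} - g_{B_{r/2^k}(x)}} \leq \frac{C}{|B_{r/2^k}(x)|}\int_{B_{r/2^k}(x)} \bkv{g-g_{B_{r/2^k}(x)}}\,dw \leq \frac{Cr}{2^k}\, M\nabla g(x),
\]
where the second inequality combines the standard Poincar\'e bound $\int_{B_\rho}|g-g_{B_\rho}|\le C\rho\int_{B_\rho}|\nabla g|$ with the very definition of $M\nabla g$. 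Summing the resulting geometric series yields $|g(x) - g_{B_r(x)}| \leq Cr\, M\nabla g(x)$, and by symmetry $|g(y) - g_{B_r(y)}| \leq Cr\, M\nabla g(y)$.

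It then remains to estimate the difference of averages $|g_{B_r(x)}-g_{B_r(y)}|$. Because $|x-y|=r$, both $B_r(x)$ and $B_r(y)$ are contained in $B_{2r}(x)$, and the volume ratios $|B_{2r}(x)|/|B_r(x)|$, $|B_{2r}(x)|/|B_r(y)|$ are purely dimensional. Comparing each small-ball average with $g_{B_{2r}(x)}$ and invoking Poincar\'e on $B_{2r}(x)$ gives
\[
\bkv{g_{B_r(x)} - g_{B_{2r}(x)}} + \bkv{g_{B_r(y)} - g_{B_{2r}(x)}} \leq Cr\, M\nabla g(x).
\]
A final triangle inequality, $|g(x)-g(y)| \leq |g(x)-g_{B_r(x)}| + |g_{B_r(x)} - g_{B_r(y)}| + |g_{B_r(y)} - g(y)|$, then delivers the claimed bound.

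The main obstacle is essentially of a bookkeeping nature: one must verify that every comparison constant depends only on the dimension (and not on $r$), so that after summing the geometric series and taking the triangle inequality the right-hand side involves only $M\nabla g(x)$ and $M\nabla g(y)$ rather than some maximal function at an intermediate point. Once scale-invariance of the averages and of the maximal function is exploited, this is automatic. The hypothesis $p\geq 1$ enters only to ensure $\nabla g\in L^1_{\text{loc}}$, which is what legitimizes the use of the Lebesgue differentiation theorem at the initial reduction.
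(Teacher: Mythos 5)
Your argument is correct. Note that the paper itself gives no proof of this lemma: it simply quotes it from Acerbi--Fusco \cite[Lemma 3]{A-F}, so there is no in-paper argument to compare against. What you wrote is the standard chaining proof of that classical result: telescoping over the dyadic balls $B_{r/2^k}(x)$, the $(1,1)$-Poincar\'e inequality on each ball, and the observation that every averaged gradient integral is taken over a ball centered at $x$ (or at $y$), so it is dominated by $M\nabla g(x)$ (resp.\ $M\nabla g(y)$) with purely dimensional constants; the comparison of $g_{B_r(x)}$ and $g_{B_r(y)}$ through the common ball $B_{2r}(x)$ is handled the same way. This is essentially the same mechanism as the usual textbook proof (often phrased via the Riesz potential bound $|g(x)-g_{B}|\lesssim \int_B |\nabla g(z)||x-z|^{-2}\,dz$ split into dyadic annuli, which is equivalent to your ball-chaining), so your proposal faithfully reconstructs the cited lemma rather than deviating from it. Two cosmetic points: the Fubini remark is unnecessary, since the estimate holds for \emph{all} $x,y$ in the full-measure set of Lebesgue points of $g$; and the trivial case $x=y$ can be dispensed with at the outset.
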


\subsection{Preliminary results on optimal mass transportation and Wasserstein space}
In this subsection, we introduce the Wasserstein space and remind some properties of it. For more detail, readers may refer \cite{A-G-S}.

\begin{definition}
Let $\mu$ be a probability measure on $\mathbb{R}^d$ and $T: \mathbb{R}^d\mapsto \mathbb{R}^d$ a measurable map. Then, $T$ induces
a probability measure $ \nu$ on $\mathbb{R}^d$ which is defined as
$$\int_{\mathbb{R}^d} \varphi(y)d\nu(y) = \int_{\mathbb{R}^d} \varphi(T(x))d\mu(x) \qquad \forall ~ \varphi\in C(\mathbb{R}^d).$$
We denote $\nu:=T_\#\mu$ and say that $\nu$ is the push-forward of $\mu$ by $T$.
\end{definition}

\begin{definition}
Let us denote by $\mathcal{P}_2(\mathbb{R}^d)$ the set of all Borel
probability measures on $\mathbb{R}^d$ with a finite second moment. For $\mu,\nu\in\mathcal{P}_2(\mathbb{R}^d)$, we consider
\begin{equation}\label{Wasserstein dist}
W_2(\mu,\nu):=\left(\inf_{\gamma\in\Gamma(\mu,\nu)}\int_{\mathbb{R}^d\times
\mathbb{R}^d}|x-y|^2d\gamma(x,y)\right)^{\frac{1}{2}},
\end{equation}
where $\Gamma(\mu,\nu)$ denotes the set of all Borel probability
measures on $\mathbb{R}^d\times \mathbb{R}^d$ which has $\mu$ and
$\nu$ as marginals, i.e.
$$\gamma(A\times \mathbb{R}^d)= \mu(A)\qquad \mbox{and}\qquad\gamma(\mathbb{R}^d\times A)= \nu(A) $$
for every Borel set $A\subset \mathbb{R}^d.$

Equation (\ref{Wasserstein dist}) defines a distance on
$\mathcal{P}_2(\mathbb{R}^d)$ which is called the {\it Wasserstein distance}.
 Equipped with the Wasserstein distance,  $\mathcal{P}_2(\mathbb{R}^d)$ is called the {\it Wasserstein space}.
 It is known that the infimum in the right hand side of Equation
(\ref{Wasserstein dist}) always achieved. We will denote by $\Gamma_o(\mu,\nu)$ the set of all $\gamma$ which minimize the expression.
\end{definition}

\begin{definition}
Let $\sigma:[a,b]\mapsto \mathcal{P}_2(\mathbb{R}^d)$ be a curve.
We say that $\sigma$ is absolutely continuous and denote it by $\sigma \in AC_2(a,b;\mathcal{P}_2(\mathbb{R}^d))$, if there exists $m\in
L^2([a,b])$ such that
\begin{equation}\label{AC-curve}
W_2(\sigma(s),\sigma(t))\leq \int_s^t m(r)dr\qquad \forall ~ a\leq s\leq t\leq b.
\end{equation}
If $\sigma \in AC_2(a,b;\mathcal{P}_2(\mathbb{R}^d))$, then the limit
$$
|\sigma'|(t):=\lim_{s\rightarrow t}\frac{W_2(\sigma(s),\sigma(t))}{|s-t|}
$$
exists for $L^1$-a.e $t\in[a,b]$. Moreover, the function $|\sigma'|$ belongs to $L^2(a,b)$ and satisfies
\begin{equation*}
|\sigma'|(t)\leq m(t) \qquad \mbox{for} ~L^1-\mbox{a.e.}~t\in [a,b],
\end{equation*}
for any $m$ satisfying \eqref{AC-curve}.
We call $|\sigma'|$ by the metric derivative of $\sigma$.
\end{definition}

\begin{lemma}[\cite{A-G-S}, Theorem 8.3.1]\label{representation of AC curves}
If $\sigma\in AC_2(a,b;\mathcal{P}_2(\mathbb{R}^d))$, then there exists a Borel vector field $v: \bbr^d\times(a,b)\mapsto \bbr^d$ such that
$$v_t \in L^2(\sigma_t) \mbox{   for} ~~L^1-a.e  ~~ t\in [a,b]  , $$
and the continuity equation
$$\partial_t\sigma_t +\nabla\cdot(v_t\sigma_t)=0  $$
holds in the sense of distribution sense.

Conversely, if a weak* continuous curve $\sigma : [a,b] \mapsto
\mathcal{P}_2(\mathbb{R}^d)$ satisfies the the continuity equation
for some Borel vector field $v_t$ with $||v_t||_{L^2(\sigma_t)}\in
L^2(a,b)$ then $\sigma: [a,b]\mapsto \mathcal{P}_2(\mathbb{R}^d)$ is
absolutely continuous and $|\sigma_t'|\leq ||v_t||_{L^2(\sigma_t)}$
for $L^1$-a.e $t\in [a,b]$.
\end{lemma}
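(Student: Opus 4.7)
The plan is to prove both directions of the equivalence between absolutely continuous curves in $(\mathcal{P}_2(\R^d), W_2)$ and solutions of the continuity equation with square-integrable velocity, following the duality-plus-regularization strategy that is standard for Wasserstein tangent bundles. Throughout I make use of the existence of optimal plans $\gamma \in \Gamma_o(\mu,\nu)$ and the identity $W_2^2(\mu,\nu)=\int|x-y|^2\,d\gamma$ already recalled in the paper, together with Lemma on metric derivatives.

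For the necessity direction, fix $\varphi \in C_c^\infty(\R^d)$ and set $F_\varphi(t):=\int\varphi\,d\sigma_t$. Taking $\gamma^{s,t}\in\Gamma_o(\sigma_s,\sigma_t)$ and applying a first-order Taylor expansion of $\varphi$ together with Cauchy--Schwarz yields
\[
|F_\varphi(t)-F_\varphi(s)|=\left|\int(\varphi(y)-\varphi(x))\,d\gamma^{s,t}\right|\leq \|\nabla\varphi\|_{L^2(\sigma_s)}\,W_2(\sigma_s,\sigma_t)+o\bke{W_2(\sigma_s,\sigma_t)}.
\]
Dividing by $t-s$ and sending $s\to t$ gives the pointwise bound $|F_\varphi'(t)|\leq\|\nabla\varphi\|_{L^2(\sigma_t)}\,|\sigma'|(t)$ for a.e.\ $t$. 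The next step is to view the linear map $\nabla\varphi\mapsto F_\varphi'(t)$ as a bounded functional on the closure of $\{\nabla\varphi:\varphi\in C_c^\infty\}$ inside $L^2(\sigma_t;\R^d)$; the Riesz representation theorem then produces a Borel field $v_t$ with $\|v_t\|_{L^2(\sigma_t)}\leq|\sigma'|(t)$ and $F_\varphi'(t)=\int\langle\nabla\varphi,v_t\rangle\,d\sigma_t$, which is precisely the distributional form of $\partial_t\sigma_t+\nabla\cdot(v_t\sigma_t)=0$. A measurable selection (or disintegration) argument gives joint Borel regularity of $(x,t)\mapsto v_t(x)$, and the $L^2$ bound on $|\sigma'|$ transfers to $\|v_t\|_{L^2(\sigma_t)}$.

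For the converse, assume $\sigma$ is weak-$*$ continuous and solves the continuity equation with $t\mapsto\|v_t\|_{L^2(\sigma_t)}$ in $L^2(a,b)$. The idea is to reduce to the classical Lagrangian setting via mollification. Let $\rho_\varepsilon$ be a standard spatial kernel (with an additional time mollification if required) and define $\sigma_t^\varepsilon:=\sigma_t*\rho_\varepsilon$ together with $v_t^\varepsilon:=((v_t\sigma_t)*\rho_\varepsilon)/\sigma_t^\varepsilon$. A Fubini argument on the distributional continuity equation shows that $(\sigma_t^\varepsilon,v_t^\varepsilon)$ solves the continuity equation in the classical sense, $\sigma_t^\varepsilon>0$ is smooth, and Jensen's inequality gives $\|v_t^\varepsilon\|_{L^2(\sigma_t^\varepsilon)}\leq\|v_t\|_{L^2(\sigma_t)}$. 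Solving the ODE $\dot X_t^\varepsilon=v_t^\varepsilon(X_t^\varepsilon)$ identifies $\sigma_t^\varepsilon=(X_t^\varepsilon)_\#\sigma_a^\varepsilon$, and Cauchy--Schwarz in time yields
\[
W_2^2(\sigma_s^\varepsilon,\sigma_t^\varepsilon)\leq\int|X_t^\varepsilon-X_s^\varepsilon|^2\,d\sigma_a^\varepsilon\leq(t-s)\int_s^t\|v_r\|_{L^2(\sigma_r)}^2\,dr.
\]
Sending $\varepsilon\to 0$ and using lower semicontinuity of $W_2$ under weak-$*$ convergence passes the estimate to $\sigma$ itself, which establishes $\sigma\in AC_2(a,b;\mathcal{P}_2(\R^d))$ and, after dividing by $t-s$ and letting $s\to t$, the bound $|\sigma'|(t)\leq\|v_t\|_{L^2(\sigma_t)}$ for $\mathcal{L}^1$-a.e.\ $t$.

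The delicate point, and the main obstacle, is the regularization step in the converse direction. One must rigorously justify that $\sigma_t^\varepsilon$ inherits the continuity equation with $v_t^\varepsilon$ (the Fubini interchange using the distributional formulation plus weak-$*$ time continuity), check that $v_t^\varepsilon$ is Lipschitz in $x$ and locally integrable in $t$ so that the ODE flow is globally well-defined, and verify that $\sigma_t^\varepsilon\to\sigma_t$ in $\mathcal{P}_2$ (not only weak-$*$), which requires uniform second-moment tightness obtained from the $L^2$ hypothesis on $\|v_t\|_{L^2(\sigma_t)}$ and the assumed finite second moment of $\sigma_a$. Once these technicalities are handled, the inequality $\|v_t\|_{L^2(\sigma_t)}\leq|\sigma'|(t)$ from the first part applied to the minimal representative and the reverse inequality from the converse close the argument, reproducing Theorem 8.3.1 of \cite{A-G-S}.
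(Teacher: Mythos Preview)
The paper does not prove this lemma at all: it is stated with the attribution ``[\cite{A-G-S}, Theorem 8.3.1]'' and no proof is given, so there is nothing in the paper to compare your argument against. Your sketch is a faithful outline of the standard proof from that reference (duality/Riesz representation for the forward direction, mollification plus Lagrangian flow for the converse), and is more than the paper itself supplies.
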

{\it Notation} : In Lemma \ref{representation of AC curves}, we use
notation $v_t:=v(\cdot,t)$ and $\sigma_t:=\sigma(t)$. Throughout
this paper, we keep this convention, unless any confusion is to be
expected, and a usual notation $\partial_t$ is adopted for temporal
derivative, i. e. $f_t:=f(\cdot,t)$ and $\partial_t f:=
\frac{\partial f}{\partial t}$.

\begin{lemma}\label{Lemma - speed}
For $i=1,~2$, let $\sigma^i \in AC_2(a,b;\mathcal{P}_2(\mathbb{R}^d))$ and $\sigma^i$ satisfies
\[
\partial_t \sigma^i_t + \nabla \cdot (v^i_t \sigma^i_t)=0.
\]
We set
$$Q(t):=\frac{1}{2}W_2^2(\sigma^1_t,\sigma^2_t). $$
Then, we have $Q \in W^{1,2}(a,b)$ and, for a.e $t\in(a,b)$
\[
\frac{dQ}{dt} \leq \int_{\mathbb{R}^d\times
\mathbb{R}^d} \big(v^1_t(x)-v^2_t(y)\big)\cdot \big(x-y\big) d\gamma_t(x,y), \qquad \gamma_t \in \Gamma_o(\sigma^1_t,\sigma^2_t).
\]
\end{lemma}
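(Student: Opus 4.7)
The plan is to combine Kantorovich duality for $W_2^2$ with the continuity equations for $\sigma^1$ and $\sigma^2$ to identify and bound $dQ/dt$, in the spirit of the derivation of the derivative of $\tfrac12 W_2^2$ along two absolutely continuous curves from Ambrosio--Gigli--Savar\'e \cite{A-G-S}.

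First I would verify that $Q\in W^{1,2}(a,b)$ by a metric triangle-inequality argument. From
\[
\bkv{W_2(\sigma^1_t,\sigma^2_t)-W_2(\sigma^1_s,\sigma^2_s)}\leq W_2(\sigma^1_t,\sigma^1_s)+W_2(\sigma^2_t,\sigma^2_s)
\]
and Lemma \ref{representation of AC curves}, the map $t\mapsto W_2(\sigma^1_t,\sigma^2_t)$ is absolutely continuous with metric-derivative bounded by $|(\sigma^1)'|(t)+|(\sigma^2)'|(t)\in L^2(a,b)$. Hence $Q$ is AC, a.e.\ differentiable, and the chain-rule bound $|Q'(t)|\leq W_2(\sigma^1_t,\sigma^2_t)\bke{|(\sigma^1)'|(t)+|(\sigma^2)'|(t)}$ places $Q'\in L^2(a,b)$ because $W_2(\sigma^1_t,\sigma^2_t)$ is continuous, hence bounded, on any closed subinterval.

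For the derivative bound itself, at each fixed $t$ I would pick a pair of optimal Kantorovich potentials $(\varphi_t,\psi_t)$, so that $Q(t)=\int\varphi_t\,d\sigma^1_t+\int\psi_t\,d\sigma^2_t$ together with $\varphi_t(x)+\psi_t(y)\leq \tfrac12|x-y|^2$ and equality on $\mathrm{supp}(\gamma_t)$. Since this pair remains admissible at every other time $s$, duality gives the sub-optimality estimate $Q(s)\geq \int\varphi_t\,d\sigma^1_s+\int\psi_t\,d\sigma^2_s$, so
\[
Q(t)-Q(s)\leq \int\varphi_t\,d\bke{\sigma^1_t-\sigma^1_s}+\int\psi_t\,d\bke{\sigma^2_t-\sigma^2_s}.
\]
Mollifying the potentials to $\varphi_t^\varepsilon,\psi_t^\varepsilon\in C^\infty_c$, testing the continuity equations against them, dividing by $t-s$, and letting $s\uparrow t$ and then $\varepsilon\downarrow 0$ yields
\[
\limsup_{s\uparrow t}\frac{Q(t)-Q(s)}{t-s}\leq \int\nabla\varphi_t\cdot v^1_t\,d\sigma^1_t+\int\nabla\psi_t\cdot v^2_t\,d\sigma^2_t.
\]
Because equality in the concave constraint on $\mathrm{supp}(\gamma_t)$ forces $\nabla\varphi_t(x)=x-y$ and $\nabla\psi_t(y)=y-x$ for $\gamma_t$-a.e.\ $(x,y)$, the right-hand side collapses to $\int\bke{v^1_t(x)-v^2_t(y)}\cdot(x-y)\,d\gamma_t(x,y)$, which is the claim.

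The main obstacle is the low regularity of $(\varphi_t,\psi_t)$: Kantorovich potentials are generally only semi-concave, while $v^i_t$ is known to lie only in $L^2(\sigma^i_t)$, so direct insertion into the distributional continuity equation is not justified. I would handle this by the mollification scheme above, using $\sigma^i_t$-a.e.\ convergence of $\nabla\varphi_t^\varepsilon$ to $\nabla\varphi_t$ together with Lebesgue points of $r\mapsto\int\nabla\varphi_t^\varepsilon\cdot v^i_r\,d\sigma^i_r$ to commute the limits. An equivalent, potential-free alternative is to push $\gamma_s\in\Gamma_o(\sigma^1_s,\sigma^2_s)$ forward by the flows of mollified velocity fields to obtain an admissible (non-optimal) coupling between $\sigma^1_t$ and $\sigma^2_t$ of cost $\int|X^1_{s,t}(x)-X^2_{s,t}(y)|^2\,d\gamma_s(x,y)$, and to differentiate this bound at $t=s$; the same right-hand side reappears after removing the mollifier.
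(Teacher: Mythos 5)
Your argument is sound in outline, but note that the paper itself gives no proof of Lemma \ref{Lemma - speed}: it simply cites Lemma 2.4.1 of the thesis \cite{K-thesis}. What you propose is the standard self-contained derivation of the (one-sided) derivative of $\tfrac12 W_2^2$ along two solutions of continuity equations, in the spirit of Theorem 8.4.7 of \cite{A-G-S}: the $W^{1,2}$ regularity of $Q$ via the triangle inequality and the metric derivatives from Lemma \ref{representation of AC curves} is correct, and the duality step (fixing optimal Kantorovich potentials at time $t$, using their admissibility at time $s$ to get $Q(t)-Q(s)\leq \int\varphi_t\,d(\sigma^1_t-\sigma^1_s)+\int\psi_t\,d(\sigma^2_t-\sigma^2_s)$, then invoking the continuity equations) is exactly how this inequality is usually obtained; since $Q$ is absolutely continuous, bounding the left difference quotient at points of differentiability suffices for the a.e.\ inequality. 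So your route is a legitimate, more explicit substitute for the paper's citation, and it correctly isolates where the real work lies.

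Two caveats deserve emphasis. First, the identification $\nabla\varphi_t(x)=x-y$, $\nabla\psi_t(y)=y-x$ on $\mathrm{supp}\,\gamma_t$ uses differentiability of the potentials at $\sigma^i_t$-a.e.\ point; semiconcavity gives only Lebesgue-a.e.\ differentiability, so this step is justified when $\sigma^i_t$ is absolutely continuous (true in the paper's application, where $f_t$ has an $L^1\cap L^\infty$ density by \eqref{linear-form-f}), but for general curves in $AC_2$, as the lemma is stated, you must fall back on the plan-based/coupling variant you sketch at the end, which is the argument that works without any absolute continuity. Second, the mollification and interchange of limits (quadratically growing, non-smooth potentials tested against the distributional continuity equation, with velocities only in $L^2(\sigma^i_t)$, and the choice of $t$ among suitable Lebesgue points) is precisely the technical content of the cited thesis lemma; as written it is a plausible plan rather than a complete justification, so either carry it out or cite \cite{A-G-S} for that step as the paper implicitly does.
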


\begin{proof}
Refer to Lemma 2.4.1 of \cite{K-thesis}.
\end{proof}

\section{Vlasov equation}
In this section, for given $\bar{u} \in
L^\infty\bke{0,T;H^3(\bbr^{3})}\cap L^2\bke{0,T; H^4(\bbr^{3})}$, we
consider the following linearized system of the Vlasov type equation
of \eqref{main}:
\begin{equation}\label{approx-CS-NNS-10-10}
\partial_t f +v \cdot \nabla_xf+\nabla_v \cdot \bke{(\bar{u}-v)f}=0,
  \quad (x, v) \in \bbr^3 \times \bbr^{3},~~t > 0,
\end{equation}
which requires initial condition $ f(x,v,0) = f_0(x,v)$.

\subsection{Solutions of Vlasov Equation as a curve in Wasserstein space }
\subsubsection{ODE }
For given $\bar{u} \in L^\infty(0,T;H^3(\bbr^{3}))\cap L^2(0,T; H^4(\bbr^{3})) $, we define a vector field $\Xi:[0,T]\times \bbr^3\times\bbr^3\rightarrow \bbr^3\times\bbr^3$ by
$$\Xi(t,x,v): =(v,\bar{u}(t,x)-v).$$
We also define a flow map $\Phi: [0,T]\times [0,T]\times
\bbr^{3}\times \bbr^{3} \mapsto \bbr^{3}\times \bbr^{3} $
corresponding to the vector field $\Xi$ by
$$\Phi(t;s,x,v):=(X(t;s,x,v),V(t;s,x,v)), $$ where
\begin{equation*}\label{ODE-F}
\left\{
\begin{array}{cl}
 \frac{d}{dt}X(t;s,x,v)= V(t;s,x,v),\\
 \frac{d}{dt}V(t;s,x,v)= \bar{u}(t,X(t;s,x,v))-V(t;s,x,v),\\
 X(s;s,x,v)=x, ~~ V(s;s,x,v)=v.
\end{array}\right.
\end{equation*}
We note
\[
\frac{d}{dt}\|V(t;s,x,v) \|^2 = 2V'\cdot V=2 \big( \bar{u}
-V\big)\cdot V\leq 0,
\]
for $|V|\geq |\bar{u}|$. Hence, if  $|v| \geq \|\bar{u}\|_{L^\infty(0,T: L^\infty(\bbr^3))}$ then
$|V(t;s,x,v)|\leq |v|$ for all $(t,x)\in [s,T]\times\bbr^3$.
Let $M$ be a positive number such that
$$\norm{\bar{u}}_{L^\infty(0,T;H^3(\bbr^{3}))} + \norm{\bar{u}}_{L^2(0,T; H^4(\bbr^{3}))} \leq M.$$
Then we know $\norm{\bar{u}}_{L^\infty(0,T: L^\infty(\bbr^3))} \leq C_1M$ for some $C_1>0$ . Hence, we have
\begin{equation}\label{Est - V}
|V(t;s,x,v)|\leq \max\bket{C_1M, |v|} \quad \mbox{for all}~(t,x)\in [s,T]\times\bbr^3.
\end{equation}
This implies
\begin{equation}\label{Est - X}
\begin{aligned}
|X(t;s,x,v)|&\leq |x|+ (t-s) \sup_{s\leq \tau\leq t}| V(\tau;s,x,v)|\\
&\leq |x|+(t-s)\max\bket{C_1M,|v| }.
\end{aligned}
\end{equation}

\subsubsection{Flow map generating a solution of Vlasov Equation}
Let $f_0\in L^1(\bbr^3\times\bbr^3)$ be a nonnegative function and we may assume $\|f_0\|_{L^1}=1$ without loss of generality.
That is, $f_0 \in \mathcal{P}(\bbr^3\times\bbr^3)$.
We define $f:[0,T] \mapsto \mathcal{P}(\bbr^3\times\bbr^3)$ by
\begin{equation}\label{Def - f}
f(t):= \Phi(t;0,\cdot,\cdot)_{\#}f_0.
\end{equation}
Then, $f$ is the unique solution of (refer to \cite{A-G-S})
\begin{equation*}\label{Vlasov-equ}
\partial_t f +  \nabla_x \cdot(v f) + \nabla_v\cdot\bke{(\bar{u}-v)f} = 0.
\end{equation*}
We note that the change of variable formula combined with \eqref{Def - f} gives
\begin{equation}\label{eq 25 : continuity}
f(t,X(t;0,x,v),V(t;0,x,v))= e^{3t}f_0(x,v).
\end{equation}
We also see that \eqref{eq 25 : continuity} can be written as
\begin{equation}\label{linear-form-f}
f(t,x,v)= e^{3t}f_0\bke{X(0;t,x,v), V(0;t,x,v)}.
\end{equation}
Suppose $f_0$ satisfies \eqref{Condition-1 : f_0-main}, that is there exists $C_2>0$ such that
\begin{equation}\label{Condition-1 : f_0}
f_0(x,v)  \leq  \frac{C_2}{1+|v|^p} , \quad \forall~(x,v)\in \bbr^3\times \bbr^3, ~~ p>5.
\end{equation}
Then we have
\begin{equation}\label{eq 30 : continuity}
\begin{aligned}
\rho_f(t,x):&= \int_{\bbr^3} f(t,x,v)dv\\
&= \int_{\bbr^3} e^{3t}f_0(X(0;t,x,v), V(0;t,x,v))dv.
\end{aligned}
\end{equation}
First of all, we note that if $|v| \geq C_1M \geq \|\bar{u} \|_{L^\infty(0,T;L^\infty)}$, then $|V(0;t,x,v)|\geq |v|$. Hence,
\begin{equation}\label{eq 27 : continuity}
\begin{aligned}
 \int_{|v|\geq C_1M} e^{3t}f_0(X(0;t,x,v), V(0;t,x,v))dv &\leq \int_{|v|\geq C_1M}  \frac{C_2}{1+|V(0;t,x,v)|^p}e^{3t} dv\\
 &\leq \int_{|v|\geq C_1M}  \frac{C_2}{1+|v|^p}e^{3t} dv.
\end{aligned}
\end{equation}
On the other hand, we get
\begin{equation}\label{eq 28 : continuity}
\begin{aligned}
 \int_{|v|< C_1M} e^{3t}f_0(V(0;t,x,v), V(0;t,x,v))dv \leq \pi e^{3t} \|f_0\|_{L^\infty}  (C_1M)^3.
\end{aligned}
\end{equation}
Combining \eqref{eq 30 : continuity}, \eqref{eq 27 : continuity} and \eqref{eq 28 : continuity}, we have
\begin{equation*}\label{eq 29 : continuity}
\begin{aligned}
\rho_f(t,x) \leq \tilde{C} e^{3t},
\end{aligned}
\end{equation*}
where
$$\tilde{C}:=\pi \|f_0\|_{L^\infty} (C_1M)^3 + \int_{|v|\geq C_1M}\frac{1}{1+|v|^p}dv .$$
That is $\rho_f \in L^\infty(0,T;L^\infty(\bbr^3))$. Similarly, we
have $m_2f \in L^\infty(0,T;L^\infty(\bbr^3))$ and
\[
\|m_2f(t)\|_{L^\infty}\leq \pi e^{3t} \|f_0\|_{L^\infty} (C_1M)^5 + e^{3t} \int_{|v|\geq R}\frac{|v|^2}{1+|v|^p}dv,
\]
where
$$m_2f(t,x):=\int_{\bbr^3}|v|^2f(t,x,v)dv. $$
Suppose $f_0 \in \mathcal{P}_2(\bbr^3\times\bbr^3)$, that is
\begin{equation*}\label{Condition-2 : f_0}
\iint_{\bbr^3\times\bbr^3} (|x|^2+|v|^2)f_0(x,v)dxdv < \infty.
\end{equation*}
Then, we have
\begin{equation*}
\begin{aligned}
&\iint_{\bbr^3\times\bbr^3}\big(|x|^2+|v|^2\big) f(t,x,v) dx dv \\
&= \iint_{\bbr^3\times\bbr^3} \big( |X(t;0,x,v)|^2+|V(t;0,x,v)|^2\big) f_0(x,v) dx dv\\
&\leq 2(1+t^2)\iint_{\bbr^3\times\bbr^3}\big(|x|^2+|v|^2+C_1^2M^2
\big)f_0(x,v) dx dv< \infty,
\end{aligned}
\end{equation*}
where we exploit \eqref{Est - V} and \eqref{Est - X} in the first
inequality. This says that $t \mapsto f_t$ is a curve in the
Wasserstein space $\mathcal{P}_2(\bbr^3\times\bbr^3)$. Exploiting
Lemma \ref{representation of AC curves}, we can show that it is an
absolutely continuous curve in $\mathcal{P}_2(\bbr^3\times\bbr^3)$
as follows:
\begin{equation*}
\begin{aligned}
\|\Xi(t,\cdot,\cdot) \|_{L^2_{f_t}}^2&=\iint_{\bbr^3\times\bbr^3} |\Xi(t,x,v) |^2 f(t,x,v)dxdv\\
&= \iint_{\bbr^3\times\bbr^3} \big(|v |^2 +|\bar{u}(t,x)-v|^2\big)f(t,x,v)dxdv\\
&\leq  4\iint_{\bbr^3\times\bbr^3} \big(|v |^2 +M^2\big)f(t,x,v)dxdv \\
& =  4\iint_{\bbr^3\times\bbr^3} \big(|V(t;0,x,v) |^2 + M^2\big)f_0(x,v)dxdv\\
&\leq 8 \iint_{\bbr^3\times\bbr^3} \big(|v |^2 +
M^2\big)f_0(x,v)dxdv,
\end{aligned}
\end{equation*}
which implies
\begin{equation*}
\begin{aligned}
\int_0^T \|\Xi(t,\cdot,\cdot) \|_{L^2_{f_t}} dt& \leq T\Big(8
\iint_{\bbr^3\times\bbr^3} \big(|v |^2 + M^2\big)f_0(x,v)dxdv
\Big)^{\frac{1}{2}} < \infty.
\end{aligned}
\end{equation*}
For convenience, we set $f_t:= f(t)$ and thus, we have $t \mapsto f_t \in AC_2(0,T; \mathcal{P}_2(\bbr^3\times\bbr^3))$.

\subsubsection{Estimation of Wasserstein distance}
 Compared to \cite{HMMM17} in which authors used Loeper's functional to estimate the distance between two solutions to the Vlasov equation,
we estimate the Wasserstein distance between of two solutions to the
Vlasov equation.
\begin{lemma}
Let $\bar{u}_i \in L^\infty(0,T;H^3(\bbr^{3}))\cap  L^2(0,T; H^4(\bbr^{3})) $ for $i=1,2$.
Suppose that $f_i$ is a solution of the following equation associated with $\bar{u}_i$:
\begin{equation}\label{equ - f}
\begin{aligned}
&\partial_t f_i +  \nabla_x \cdot(v f_i) + \nabla_v\cdot\bke{(\bar{u}_i-v)f_i} = 0,\\
 &\qquad\qquad\qquad    f_i(0)=f_0,
  \end{aligned}
\end{equation}
where $f_0 \in \mathcal{P}_2(\bbr^3\times\bbr^3)$ is given. We set
$Q(t):= \frac{1}{2}W_2^2(f_1(t),f_2(t))$.
Then, we have
\begin{equation}\label{eq 21 : continuity}
Q(t) \leq e^{\bke{2+\|\bar{u}_2\|_{L^\infty(0,t ; H^3(\bbr^3))}^2}t} 
\norm{\rho_1}_{L^\infty(0,t; L^\infty)}\int_0^t
\norm{(\bar{u}_1-\bar{u}_2)(s) }_{L^2}^2ds.
\end{equation}
Here, $ \rho_i(t,x):=\int_{\bbr^3} f_i(t,x,v) dv$ for $i=1, 2$.

\end{lemma}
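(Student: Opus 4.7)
The plan is to apply Lemma \ref{Lemma - speed} to the two Vlasov solutions, expand the vector-field differences in phase space, and then absorb the unfavorable $|v-\tilde v|^2$ contributions into the negative damping term $-|v-\tilde v|^2$ that the Vlasov drag $\bar u_i-v$ naturally produces. In the end, what survives is controlled by $W_2^2(f_1,f_2)$ itself, so a Gr\"onwall argument closes the estimate.

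More concretely, set $\Xi_i(t,x,v)=(v,\bar u_i(t,x)-v)$, which is the velocity of the Vlasov flow in phase space. By Lemma \ref{Lemma - speed}, for an optimal plan $\gamma_t\in\Gamma_o(f_1(t),f_2(t))$,
\[
\frac{dQ}{dt}\leq \int_{\bbr^6\times\bbr^6}\bigl(\Xi_1(t,x,v)-\Xi_2(t,\tilde x,\tilde v)\bigr)\cdot\bigl((x,v)-(\tilde x,\tilde v)\bigr)\,d\gamma_t.
\]
Expanding the integrand gives
\[
(v-\tilde v)\cdot(x-\tilde x)+\bigl(\bar u_1(t,x)-\bar u_2(t,\tilde x)\bigr)\cdot(v-\tilde v)-|v-\tilde v|^2.
\]
I would then split $\bar u_1(t,x)-\bar u_2(t,\tilde x)=[\bar u_1(t,x)-\bar u_2(t,x)]+[\bar u_2(t,x)-\bar u_2(t,\tilde x)]$.

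Next I apply Young's inequality of the form $ab\leq \tfrac14 a^2+b^2$ to each of the three cross terms, so that the $|v-\tilde v|^2$ pieces produced add up to $\tfrac12+\tfrac14+\tfrac14=1$ and are cancelled exactly by the $-|v-\tilde v|^2$ term. This leaves
\[
\frac{dQ}{dt}\leq \tfrac12\!\int |x-\tilde x|^2\,d\gamma_t+\!\int|\bar u_2(t,x)-\bar u_2(t,\tilde x)|^2 d\gamma_t+\!\int|(\bar u_1-\bar u_2)(t,x)|^2 d\gamma_t.
\]
For the second integral, I invoke the Sobolev embedding $H^3(\bbr^3)\hookrightarrow W^{1,\infty}(\bbr^3)$ in dimension three to obtain $|\bar u_2(t,x)-\bar u_2(t,\tilde x)|\leq C\|\bar u_2(t)\|_{H^3}|x-\tilde x|$, so that term is bounded by $C\|\bar u_2(t)\|_{H^3}^2\!\int|x-\tilde x|^2 d\gamma_t$. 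For the third integral, the marginal of $\gamma_t$ on the first $\bbr^6$ factor is $f_1(t)$, so the push-forward/marginal identity gives
\[
\int|(\bar u_1-\bar u_2)(t,x)|^2 d\gamma_t=\int|(\bar u_1-\bar u_2)(t,x)|^2\rho_1(t,x)\,dx\leq \|\rho_1(t)\|_{L^\infty}\|(\bar u_1-\bar u_2)(t)\|_{L^2}^2.
\]
Since $\int|x-\tilde x|^2 d\gamma_t\leq 2Q(t)$, combining everything yields
\[
\frac{dQ}{dt}\leq \bigl(1+C\|\bar u_2(t)\|_{H^3}^2\bigr)Q(t)+\|\rho_1(t)\|_{L^\infty}\|(\bar u_1-\bar u_2)(t)\|_{L^2}^2.
\]
Applying Gr\"onwall with $Q(0)=0$ then produces \eqref{eq 21 : continuity}, possibly after crudely replacing $\int_0^t\|\bar u_2(s)\|_{H^3}^2 ds$ by $t\|\bar u_2\|_{L^\infty(0,t;H^3)}^2$ inside the exponential.

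The main delicacy is to arrange the Young inequality constants so that the $|v-\tilde v|^2$ terms are fully absorbed by the intrinsic $-|v-\tilde v|^2$ dissipation coming from the friction $\bar u_i-v$; otherwise one would accumulate uncontrolled velocity-variance terms on the right-hand side. The rest is bookkeeping: Sobolev embedding to convert $H^3$ into a Lipschitz constant for $\bar u_2$, the marginal identity to convert a $\gamma_t$-integral into one against $\rho_1$, and Gr\"onwall's inequality to finish.
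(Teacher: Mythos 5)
Your proposal is correct and follows essentially the same route as the paper: apply Lemma \ref{Lemma - speed} with an optimal plan, split $\bar u_1(x)-\bar u_2(\tilde x)$ into $[\bar u_1-\bar u_2](x)$ plus a Lipschitz increment of $\bar u_2$, use the marginal identity to bring in $\rho_1$, and close with Gr\"onwall. The only (harmless) difference is that you tune the Young constants to cancel the $|v-\tilde v|^2$ terms against the friction, whereas the paper simply absorbs them into $Q(t)$, which changes nothing beyond the constant in the exponential.
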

\begin{proof}
From Lemma \ref{Lemma - speed}, we have
\begin{equation*}
\frac{d}{dt}Q(t) \leq \iint_{\bbr^3\times \bbr^3} \iint_{\bbr^3\times \bbr^3}
(v_1-v_2,\bar{u}_1(x_1,t)-\bar{u}_2(x_2,t)-v_1+v_2)\cdot
\end{equation*}
\begin{equation*}
\cdot(x_1-x_2,v_1-v_2)
d \gamma_t(x_1,v_1,x_2,v_2),
\end{equation*}
where $d \gamma_t \in \Gamma_o(f_1(t),f_2(t)) $. Hence, we have
\begin{equation}\label{eq 24 : continuity}
\begin{aligned}
\frac{d}{dt}Q(t) 
&\leq \iint_{\bbr^3\times \bbr^3} \iint_{\bbr^3\times \bbr^3} |x_1-x_2 |^2 + |v_1-v_2|^2 d \gamma_t(x_1,v_1,x_2,v_2)\\
&+  2\iint_{\bbr^3\times \bbr^3} \iint_{\bbr^3\times \bbr^3}   |\bar{u}_1(x_1,t)-\bar{u}_2(x_1,t)|^2 +  |\bar{u}_2(x_1,t)-\bar{u}_2(x_2,t)|^2 d \gamma_t(x_1,v_1,x_2,v_2)\\
&\leq 2Q(t) + 2\iint_{\bbr^3\times \bbr^3}    |\bar{u}_1(x_1,t)-\bar{u}_2(x_1,t)|^2 f_1(x_1,v_1,t) dx_1 dv_1 + \norm{\nabla \bar{u}_2}_{L^\infty}^2 Q(t)\\
&\leq (2 + \norm{\nabla \bar{u}_2}_{L^\infty}^2)Q(t) + \norm{\rho_1(t)}_{L^\infty} \norm{(\bar{u}_1-\bar{u}_2)(t) }_{L^2}^2.
\end{aligned}
\end{equation}
Exploiting Gronwall's inequality to \eqref{eq 24 : continuity} with
$Q(0)=0$, we have
\[
Q(t) \leq 
\int_0^t h(s)e^{\int_s^t g(\theta)d\theta}ds,
\]
where
\[
g(s):= 2 +  \norm{\nabla \bar{u}_2(s)}_{L^\infty}^2 \quad
\mbox{and}\quad h(s):= \norm{\rho_1(s)}_{L^\infty}
\norm{(\bar{u}_1-\bar{u}_2)(s) }_{L^2}^2.
\]
This completes the proof.
\end{proof}
\subsection{Some estimates on the Vlasov equation}
In this subsection, we present the proof of solvability of the
linear equation \eqref{approx-CS-NNS-10-10} and provide some
estimates in a weighted Lebesgue spaces. In the next lemma, we
consider the case that $f_0$ is compactly supported and smooth.

\begin{lemma}\label{well-f-without-support}
Let $T>0$, $k\in \mathbb{N}$ and $f_0 \in
C^\infty_c(\R^3\times \R^3)$ and $\bar{u}\in L^\infty(0;T
;(H^3\cap
C^\infty)(\bbr^3))\cap L^2(0;T ;(H^4\cap
C^\infty)(\bbr^3))$.
Suppose that
$f$ is the solution given in \eqref{linear-form-f} to the
equations \eqref{approx-CS-NNS-10-10}. Then, we have $(1+|v|^{k})\nabla^\alpha_v\nabla_x
^\beta f\in L^\infty(0,T;L^{2}(\bbr^3\times \bbr^3))$ and
furthermore,
\[
\sup_{0<t\leq T}\sum_{0\leq |\alpha|+|\beta|\leq
2}\norm{(1+|v|^{k})\nabla^\alpha_v\nabla_x^\beta f(t)}^2_{L^2(\bbr^3\times
\bbr^3)}
\]
\begin{equation}\label{cpt-f-vlasov}
\leq \sum_{0\leq |\alpha|+|\beta|\leq
2}\norm{(1+|v|^{k})\nabla^\alpha_v\nabla_x ^\beta
f_{0}}^2_{L^2(\bbr^3\times \bbr^3)}\exp{\Big(C\int_0^T
\bke{1+\norm{\bar{u}(\tau)}_{H^{4}(\R^3)}}\,d\tau\Big)},
\end{equation}
where $C$ is a constant depending only on $k$.
\end{lemma}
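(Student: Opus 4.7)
The strategy is a weighted $L^2$ energy estimate combined with a commutator analysis up to second order in $(\alpha,\beta)$, closed by a single application of Grönwall. Because $f_0\in C_c^\infty$ and $\bar u$ is smooth, the solution given by \eqref{linear-form-f} is classical and, by the flow bound \eqref{Est - V}, the $v$-support of $f(t,\cdot,\cdot)$ stays bounded on $[0,T]$, so all weighted integrals and integrations by parts in what follows are justified. The starting point is to rewrite the equation in nondivergence form,
$$\partial_t f + v\cdot\nabla_x f + (\bar u-v)\cdot\nabla_v f - 3f = 0,$$
using $\nabla_v\cdot(\bar u-v)=-3$.

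For the base case I multiply by $(1+|v|^k)^2 f$ and integrate over $\R^3\times\R^3$. The contribution of $v\cdot\nabla_x f$ vanishes after integration by parts in $x$ since $\nabla_x\cdot v = 0$. Writing the $\nabla_v f$ term as $\tfrac12 (1+|v|^k)^2(\bar u-v)\cdot\nabla_v(f^2)$ and integrating by parts in $v$ produces three contributions: the pairing of $-v$ with $\nabla_v(1+|v|^k)^2 = 2k|v|^{k-2}v(1+|v|^k)$ yields a good-sign friction term $k\int|v|^k(1+|v|^k)f^2$ that can be discarded; the $\bar u$ pairing is bounded by $Ck\|\bar u\|_{L^\infty}\|(1+|v|^k)f\|_{L^2}^2$ using the elementary inequality $|v|^{k-1}(1+|v|^k)\le C(1+|v|^k)^2$ (valid for $k\ge 1$); and the divergence piece $-3$ combines with $-3f$ to produce a bounded multiple of $\|(1+|v|^k)f\|_{L^2}^2$. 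Invoking $H^3(\R^3)\hookrightarrow L^\infty$ gives the base-case bound $\frac{d}{dt}\|(1+|v|^k)f\|_{L^2}^2 \le C(1+\|\bar u\|_{H^3})\|(1+|v|^k)f\|_{L^2}^2$.

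For $1\le|\alpha|+|\beta|\le 2$ I apply $\nabla_v^\alpha\nabla_x^\beta$ to the equation. The transport operator $v\cdot\nabla_x+(\bar u-v)\cdot\nabla_v$ commutes with $\nabla_v^\alpha\nabla_x^\beta$ up to two kinds of terms: pure commutators from $[\partial_{v_i},v\cdot\nabla_x]=\partial_{x_i}$ and $[\partial_{v_i},-v\cdot\nabla_v]=-\partial_{v_i}$, which involve only lower-order derivatives of $f$ that are already controlled by $\Phi(t):=\sum_{|\alpha|+|\beta|\le 2}\|(1+|v|^k)\nabla_v^\alpha\nabla_x^\beta f\|_{L^2}^2$; and $\bar u$-commutators of the form $\binom{\beta}{\beta'}(\nabla_x^{\beta'}\bar u)\cdot\nabla_v(\nabla_v^\alpha\nabla_x^{\beta-\beta'}f)$ with $0<|\beta'|\le|\beta|$. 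Repeating the base-case computation on the differentiated equation for each $(\alpha,\beta)$ and estimating the $\bar u$-commutators by Cauchy-Schwarz gives bounds of the type $\|\nabla^{|\beta'|}\bar u\|_{L^\infty}\Phi(t)$, with $\|\nabla\bar u\|_{L^\infty}\lesssim\|\bar u\|_{H^3}$ and $\|\nabla^2\bar u\|_{L^\infty}\lesssim\|\bar u\|_{H^4}$ via the Sobolev embedding $H^4(\R^3)\hookrightarrow C^2(\R^3)$. Summing over $|\alpha|+|\beta|\le 2$ yields
$$\frac{d}{dt}\Phi(t) \le C(1+\|\bar u\|_{H^4})\Phi(t),$$
and Grönwall's lemma produces the claimed estimate \eqref{cpt-f-vlasov}.

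The only nontrivial obstacle is the top-order commutator $\nabla^2\bar u\cdot\nabla_v f$ that appears in the equation for $\nabla_x^\beta f$ with $|\beta|=2$: the hypothesis $\bar u\in L^\infty(0,T;H^3)$ alone does not control $\nabla^2\bar u$ in $L^\infty_x$, and the argument only closes because we additionally have $\bar u\in L^2(0,T;H^4)$, placing $\|\nabla^2\bar u\|_{L^\infty}\in L^2_t\subset L^1_t(0,T)$. This is precisely why the exponent in \eqref{cpt-f-vlasov} features $\|\bar u\|_{H^4}$ under a single time integral, and not $\|\bar u\|_{H^4}^2$.
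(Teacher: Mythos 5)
Your proof is correct and takes essentially the same route as the paper: a weighted $L^2$ energy estimate for $\nabla_v^\alpha\nabla_x^\beta f$ with $|\alpha|+|\beta|\le 2$, commutator terms involving $\nabla_x\bar u$ and $\nabla_x^2\bar u$ controlled through the embedding $H^4(\R^3)\hookrightarrow W^{2,\infty}(\R^3)$, and a single Gr\"onwall step using $\|\bar u\|_{H^4}\in L^1(0,T)$; working in nondivergence form with the weight $(1+|v|^k)^2$ instead of the paper's divergence form with $1+|v|^{2k}$ is only a cosmetic difference (and note the commutator $[\partial_{v_i},v\cdot\nabla_x]=\partial_{x_i}$ produces terms of the same total order, not lower, but they are still controlled by $\Phi$, as you claim).
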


\begin{proof}
Due to the assumptions of $f_0$ and $\bar{u}$, it is well-known that
$f$ has compact support and smooth with respect to $(x,v)$ variables
(see Section 3.1.1--3.1.2). Therefore, it suffices to prove the
estimate \eqref{cpt-f-vlasov}. Indeed, we observe first that
\[
-\iint_{\bbr^3\times \bbr^3}\nabla_v\cdot
[(\bar{u}-v)\nabla^{\alpha}_v\nabla^\beta_xf]
\nabla^{\alpha}_v\nabla^\beta_xf\,dvdx
\]
\[
=3\iint_{\bbr^3\times
\bbr^3}|\nabla^{\alpha}_v\nabla^\beta_xf|^2\,dvdx-\iint_{\bbr^3\times\bbr^3}
(\bar{u}-v)\cdot \nabla_v[\nabla^{\alpha}_v\nabla^\beta_xf]
\nabla^{\alpha}_v\nabla^\beta_xf\,dvdx
\]
\[
=3\iint_{\bbr^3\times
\bbr^3}|\nabla^{\alpha}_v\nabla^\beta_xf|^2\,dvdx+\frac{1}{2}\iint_{\bbr^3\times
\bbr^3}\nabla_v\cdot(\bar{u}-v)
|\nabla^{\alpha}_v\nabla^\beta_xf|^2\,dvdx
\]
\begin{equation}\label{kinetic-monet-est-0}
=\frac{3}{2}\iint_{\bbr^3\times
\bbr^3}|\nabla^{\alpha}_v\nabla^\beta_xf|^2\,dvdx.
\end{equation}
On the other hand, we compute that
\[
-\iint_{\bbr^3\times \bbr^3}|v|^{2k} \nabla_v\cdot
[(\bar{u}-v)\nabla^{\alpha}_v\nabla^\beta_xf]
\nabla^{\alpha}_v\nabla^\beta_xf\,dvdx
\]
\[
=3\iint_{\bbr^3\times\bbr^3}|v|^{2k}
|\nabla^{\alpha}_v\nabla^\beta_xf|^2\,dvdx-\iint_{\bbr^3\times\bbr^3}|v|^{2k}
 (\bar{u}-v)\nabla_v[\nabla^{\alpha}_v\nabla^\beta_xf]
\nabla^{\alpha}_v\nabla^\beta_xf\,dvdx
\]
\[
=3\iint_{\bbr^3\times\bbr^3}|v|^{2k}
|\nabla^{\alpha}_v\nabla^\beta_xf|^2\,dvdx+\frac{1}{2}\iint_{\bbr^3\times\bbr^3}\nabla_v\cdot\Big(|v|^{2k}(\bar{u}-v)\Big)
|\nabla^{\alpha}_v\nabla^\beta_xf|^2\,dvdx
\]
\[
=\frac{1}{2}\iint_{\bbr^3\times\bbr^3}\nabla_v(|v|^{2k}) \cdot
(\bar{u}-v)|\nabla^{\alpha}_v\nabla^\beta_xf|^2\,dvdx+\frac{3}{2}\iint_{\bbr^3\times\bbr^3}|v|^{2k}|\nabla^{\alpha}_v\nabla^\beta_xf|^2\,dvdx
\]
\[
=k\iint_{\bbr^3\times\bbr^3}|v|^{2k-2}v \cdot
(\bar{u}-v)|\nabla^{\alpha}_v\nabla^\beta_xf|^2\,dvdx+\frac{3}{2}\iint_{\bbr^3\times\bbr^3}|v|^{2k}|\nabla^{\alpha}_v\nabla^\beta_xf|^2\,dvdx
\]
\[
\leq
-(k-\frac{3}{2})\iint_{\bbr^3\times\bbr^3}|v|^{2k}|\nabla^{\alpha}_v\nabla^\beta_xf|^2\,dvdx
+k\norm{\bar{u}}_{L^\infty}\iint_{\bbr^3\times\bbr^3}|v|^{2k-1}|\nabla^{\alpha}_v\nabla^\beta_xf|^2\,dvdx
\]
\[
\leq
-(k-\frac{3}{2})\iint_{\bbr^3\times\bbr^3}|v|^{2k}|\nabla^{\alpha}_v\nabla^\beta_xf|^2\,dvdx
+k\|\bar{u}\|_{L^\infty}\iint_{\bbr^3\times\bbr^3}(\frac{2k-1}{2k}|v|^{2k}+\frac{1}{2k})|\nabla^{\alpha}_v\nabla^\beta_xf|^2\,dvdx
\]
\[
\leq
\Big(-(k-\frac{3}{2})+\frac{2k-1}{2}\|\bar{u}\|_{L^\infty}\Big)\iint_{\bbr^3\times\bbr^3}|v|^{2k}|\nabla^{\alpha}_v\nabla^\beta_xf|^2\,dvdx
+\frac{1}{2}\|\bar{u}\|_{L^\infty}\iint_{\bbr^3\times\bbr^3}|\nabla^{\alpha}_v\nabla^\beta_xf|^2\,dvdx
\]
\begin{equation*}
\leq
\Big(-k+\frac{3}{2}+k\|\bar{u}\|_{L^\infty}\Big)\iint_{\bbr^3\times\bbr^3}(1+|v|^{2k})|\nabla^{\alpha}_v\nabla^\beta_xf|^2\,dvdx
\end{equation*}
\begin{equation}\label{kinetic-monet-est-00}
\leq
C(1+\|\bar{u}\|_{L^\infty})\iint_{\bbr^3\times\bbr^3}(1+|v|^{2k})|\nabla^{\alpha}_v\nabla^\beta_xf|^2\,dvdx,
\end{equation}
where $C$ is a constant depending on $k$.

From the estimate \eqref{kinetic-monet-est-0} and
\eqref{kinetic-monet-est-00}, it follows that
\[
-\iint_{\bbr^3\times \bbr^3}(1+|v|^{2k}) \nabla_v\cdot
[(\bar{u}-v)\nabla^{\alpha}_v\nabla^\beta_xf]
\nabla^{\alpha}_v\nabla^\beta_xf\,dvdx
\]
\begin{equation}\label{kinetic-monet-00}
\leq
C(1+\|\bar{u}\|_{L^\infty})\iint_{\bbr^3\times\bbr^3}(1+|v|^{2k})|\nabla^{\alpha}_v\nabla^\beta_xf|^2\,dvdx.
\end{equation}

Next, testing \eqref{approx-CS-NNS-10-10} by  $(1+|v|^{2k})f$ and
integrating over $\bbr^3\times \bbr^3$ with the
estimate \eqref{kinetic-monet-00}, we obtain
\[
\frac{1}{2}\frac{d}{dt}\iint_{\bbr^3\times\bbr^3}(1+|v|^{2k})|f|^2\,dvdx
= -\iint_{\bbr^3\times\bbr^3}(1+|v|^{2k}) \nabla_v\cdot
\bke{(\bar{u}-v)f} f\,dvdx.
\]
\begin{equation}\label{vlasov-lemma-100}
\leq
C(1+\|\bar{u}\|_{L^\infty})\iint_{\bbr^3\times\bbr^3}(|v|^{2k}+1)|f|^2\,dvdx.
\end{equation}

Taking the differential operator $\nabla_x$ to
\eqref{approx-CS-NNS-10-10}, the equation can be rewritten as
\begin{equation}\label{rewitten-eq-20}
\nabla_x f_t + v\cdot \nabla_x \nabla_xf + \nabla_v\cdot
\bke{(\bar{u}-v)\nabla_xf}+\nabla_v\cdot
\bke{\nabla_x(\bar{u}-v)f}=0.
\end{equation}

Testing \eqref{rewitten-eq-20} by $(1+|v|^{2k})\partial_{x}f$, we obtain
\[
\frac{1}{2}\frac{d}{dt}\iint_{\bbr^3\times\bbr^3}(1+|v|^{2k})|\nabla_x
f|^2\,dvdx =-\iint_{\bbr^3\times\bbr^3}(1+|v|^{2k}) \nabla_v\cdot
\bke{(\bar{u}-v)\nabla_xf} \nabla_xf\,dvdx
\]
\[
-\iint_{\bbr^3\times\bbr^3}(1+|v|^{2k})\nabla_v\cdot
\bke{\nabla_x(\bar{u}-v)f} \nabla_xf\,dvdx.:=I_1+I_2.
\]
Exploiting \eqref{kinetic-monet-00}, we estimate $I_1$ as follows:
\[
I_1 \leq
C(1+\|\bar{u}\|_{L^\infty})\iint_{\bbr^3\times\bbr^3}(|v|^{2k}+1)|\nabla_xf|^2\,dvdx.
\]
Using H\"{o}lder and Young's inequalities, we have
\[
I_{2}
\leq C\|\nabla
\bar{u}\|_{L^\infty}\iint_{\bbr^3\times\bbr^3}(1+|v|^{2k})\Big(|\nabla_vf|^2+
|\nabla_xf|^2\Big)\,dvdx.
\]
Hence, combining $I_{1}$ with $I_{2}$, we obtain
\[
\frac{d}{dt}\iint_{\bbr^3\times\bbr^3}(1+|v|^{2k})|\nabla_x
f|^2\,dvdx
\]
\[
\leq C(1+\|\bar{u}\|_{L^\infty}+\|\nabla
\bar{u}\|_{L^\infty})\iint_{\bbr^3\times\bbr^3}(|v|^{2k}+1)\Big(|\nabla_vf|^2+
|\nabla_xf|^2\Big)\,dvdx.
\]
\begin{equation}\label{vlasov-lemma-200}
\leq
C(1+\|\bar{u}\|_{H^4})\iint_{\bbr^3\times\bbr^3}(|v|^{2k}+1)\Big(|\nabla_vf|^2+
|\nabla_xf|^2\Big)\,dvdx,
\end{equation}
where we use Sobolev embedding in last inequality. Again, taking the
differential operator $\nabla^2_x$ to \eqref{rewitten-eq-20}, it is
rewritten as
\begin{equation}\label{rewitten-eq-30}
\nabla_x^2 f_t + v\cdot \nabla_x \nabla_x^2f + \nabla_v\cdot
\bke{(\bar{u}-v)\nabla_x^2f}+2\nabla_v\cdot
\bke{\nabla_x(\bar{u}-v)\nabla_xf}+\nabla_v\cdot
\bke{\nabla_x^2(\bar{u}-v)f}=0.
\end{equation}

Multiplying \eqref{rewitten-eq-30} with $(1+|v|^{2k})\nabla^2_{x}f$
and integrating over the phase variables, we obtain
\[
\frac{d}{dt}\iint_{\bbr^3\times\bbr^3}(1+|v|^{2k})|\nabla_x^2
f|^2\,dvdx =-\iint_{\bbr^3\times\bbr^3}(1+|v|^{2k})\nabla_v\cdot
[(\bar{u}-v)\nabla_x^2f] \nabla_x^2f\,dvdx
\]
\[
-2\iint_{\bbr^3\times\bbr^3}(1+|v|^{2k}) \nabla_v\cdot
[\nabla_x(\bar{u}-v)\nabla_xf]\nabla_x^2f\,dvdx
\]
\[
-\iint_{\bbr^3\times\bbr^3}(1+|v|^{2k})\nabla_v\cdot
[\nabla_x^2(\bar{u}-v)f]\nabla_x^2f\,dvdx:=II_1+2II_2+II_3.
\]
Using \eqref{kinetic-monet-00}, H\"{o}lder and Young's inequalities,
the terms $II_1$, $II_2$ and $II_3$ are estimated, respectively, as
follows.
\[
II_1 \leq
C(1+\|\bar{u}\|_{L^\infty})\iint_{\bbr^3\times\bbr^3}(1+|v|^{2k})|\nabla^2_xf|^2\,dvdx,
\]
\[
II_2\leq \|\nabla
\bar{u}\|_{L^\infty}\iint_{\bbr^3\times\bbr^3}(1+|v|^{2k})\Big(|\nabla_v
\nabla_xf|^2+|\nabla_x^2f|^2\Big)\,dvdx,
\]
and
\[
II_3\leq \|\nabla^2
\bar{u}\|_{L^\infty}\iint_{\bbr^3\times\bbr^3}(1+|v|^{2k})\Big(|\nabla_vf|^2+
|\nabla_x^2f|^2\Big)\,dvdx.
\]
Combining estimates $II_{1}$--$II_{3}$, we get
\[
\frac{1}{2}\frac{d}{dt}\iint_{\bbr^3\times\bbr^3}(1+|v|^{2k})|\nabla_x^2
f|^2\,dvdx
\]
\begin{equation}\label{vlasov-lemma-300}
\leq
C(1+\|\bar{u}\||_{H^4})\iint_{\bbr^3\times\bbr^3}(1+|v|^{2k})\Big(|\nabla_vf|^2+|\nabla_v\nabla_xf|^2+
|\nabla_x^2f|^2\Big)\,dvdx.
\end{equation}
Taking $\nabla_v$ to \eqref{approx-CS-NNS-10-10}, we have
\begin{equation}\label{rewitten-eq-dff-v-1}
\nabla_v f_t +v\cdot \nabla_x \nabla_vf+ \nabla_v\cdot
\bke{(\bar{u}-v)\nabla_vf} +\nabla_v\cdot
\bke{\partial_v(\bar{u}-v)f}=-\nabla_xf.
\end{equation}
Again, testing $(1+|v|^{2k})\nabla_{v}f$ and integrating over the
phase variables, we obtain
\[
\frac{1}{2}\frac{d}{dt}\iint_{\bbr^3\times\bbr^3}(1+|v|^{2k})|\nabla_v
f|^2\,dvdx =-\iint_{\bbr^3\times\bbr^3}(1+|v|^{2k}) \nabla_v\cdot
[(\bar{u}-v)\nabla_vf] \nabla_vf\,dvdx
\]
\[
-\iint_{\bbr^3\times\bbr^3}(1+|v|^{2k})\nabla_v\cdot
[\nabla_v(\bar{u}-v)f]
\nabla_vf\,dvdx-\iint_{\bbr^3\times\bbr^3}(1+|v|^{2k})\nabla_xf
\nabla_v f\,dvdx
\]
\[
\leq
C(1+\|\bar{u}\|_{L^\infty})\iint_{\bbr^3\times\bbr^3}(|v|^{2k}+1)
(|\nabla_xf|^2+|\nabla_vf|^2)\,dvdx,
\]
where Young's inequality is used for last term. Therefore,
\[
\frac{1}{2}\frac{d}{dt}\iint_{\bbr^3\times\bbr^3}(1+|v|^{2k})|\nabla_v
f|^2\,dvdx
\]
\begin{equation}\label{vlasov-lemma-400}
\leq
C(1+\|\bar{u}\|_{H^4})\iint_{\bbr^3\times\bbr^3}(1+|v|^{2k})(|\nabla_xf|^2+|\nabla_vf|^2)\,dvdx.
\end{equation}
Taking $\nabla_v$ to \eqref{rewitten-eq-20}, we get
\begin{equation}\label{rewitten-eq-50}
\nabla_v\nabla_x f_t + v\cdot \nabla_x \nabla_v\nabla_xf +
\nabla_v\cdot \bke{(\bar{u}-v)\nabla_v\partial_xf}+\nabla_v\cdot
\bke{\nabla_v\nabla_x(\bar{u}-v)f}
\end{equation}
\[
=-\nabla_x \nabla_xf+\nabla_v \nabla_xf-\nabla_v\cdot
\bke{\nabla_x\bar{u}\nabla_vf}.
\]
Testing \eqref{rewitten-eq-50} by $(1+|v|^{2k})\nabla_v\nabla_x f$
and using integration by parts, we obtain
\[
\frac{1}{2}\frac{d}{dt}\iint_{\bbr^3\times\bbr^3}(1+|v|^{2k})|\nabla_v\nabla_x
f|^2\,dvdx
\]
\begin{equation}\label{vlasov-lemma-500}
\leq C(1+\|\nabla
\bar{u}\|_{H^4})\iint_{\bbr^3\times\bbr^3}(1+|v|^{2k})\Big(|\nabla_v\nabla_x
f|^2+|\nabla_v\nabla_x f|^2+|\nabla^2_v f|^2+|\nabla^2_x
f|^2\Big)\,dvdx.
\end{equation}
Lastly, taking $\nabla_v$ to \eqref{rewitten-eq-dff-v-1}, we have
\begin{equation}\label{rewitten-eq-60}
\nabla^2_v f_t + v\cdot \nabla_x \partial^2_vf + \nabla_v\cdot
\bke{(\bar{u}-v)\nabla^2_vf}=2\nabla^2_vf-2\nabla_x\nabla_v f.
\end{equation}
Testing \eqref{rewitten-eq-60} by $(1+|v|^{2k})\partial^2_v f$ and
integrating over the phase variables, by the direct calculations, we
obtain
\[
\frac{1}{2}\frac{d}{dt}\iint_{\bbr^3\times\bbr^3}(1+|v|^{2k})|\nabla_v\nabla_v
f|^2\,dvdx
\]
\begin{equation}\label{vlasov-lemma-600}
\leq
C(1+\|\bar{u}\|_{L^\infty})\iint_{\bbr^3\times\bbr^3}(1+|v|^{2k})(|\nabla^2_vf|^2+|\nabla_x\nabla_vf|^2)\,dvdx.
\end{equation}
Summing up the estimates \eqref{vlasov-lemma-100},
\eqref{vlasov-lemma-200}, \eqref{vlasov-lemma-300},
\eqref{vlasov-lemma-400}, \eqref{vlasov-lemma-500} and
\eqref{vlasov-lemma-600}, we obtain
\[
\frac{1}{2}\frac{d}{dt}\iint_{\bbr^3\times\bbr^3}(1+|v|^{2k})\Big(|f|^2+|\nabla_xf|^2+|\nabla_vf|^2+|\nabla_v\nabla_xf|^2+|\nabla^2_xf|^2+|\nabla^2_vf|^2
\Big)
\]
\[
\leq
C(1+\|\bar{u}\|_{H^4})\iint_{\bbr^3\times\bbr^3}(1+|v|^{2k})\Big(|f|^2+|\nabla_xf|^2+|\nabla_vf|^2+|\nabla_v\nabla_xf|^2+|\nabla^2_xf|^2+|\nabla^2_vf|^2
\Big)\,dvdx.
\]
Put
$X(t):=\iint_{\bbr^3\times\bbr^3}(1+|v|^{2k})\Big(|f|^2+|\nabla_xf|^2+|\nabla_vf|^2+|\nabla_v\nabla_xf|^2+|\nabla^2_xf|^2+|\nabla^2_vf|^2\Big)\,dvdx.$
Thus, we have
\begin{equation}\label{monent-final}
\frac{1}{2}\frac{d}{dt}X(t)\leq C(1+\|\bar{u}\|_{H^4})X(t).
\end{equation}
Applying Grownwall's inequality to \eqref{monent-final} with
$\bar{u}\in L^\infty(0;T ;H^3(\bbr^3))\cap L^2(0;T ;H^4(\bbr^3))$,
we finally get the desired result, that is,
\[ \sup_{0<t\leq
T}\sum_{0\leq |\alpha|+|\beta|\leq
2}\norm{(1+|v|^{k})\nabla^\alpha_v\nabla_x^\beta f}^2_{L^2(\bbr^3\times
\bbr^3)}
\]
\[
\leq \sum_{0\leq |\alpha|+|\beta|\leq
2}\norm{(1+|v|^{k})\nabla^\alpha_v\nabla_x ^\beta
f_0}^2_{L^2(\bbr^3\times \bbr^3)}\exp{\Big(C\int_0^T
\Big(1+\norm{\bar{u}(\tau)}_{H^{4}}\Big)\,d\tau\Big)}.
\]
We complete the proof.
\end{proof}

Using the Lemma \ref{well-f-without-support}, we obtain similar results
for more general data $f_0$ and $\bar{u}$. In particular, compact support of $f_0$ is not necessary.

\begin{proposition}\label{vlasov_pro}
Let $T>0$ and $k\geq3$. For a given $\bar{u}\in L^\infty(0;T
;H^3(\bbr^3))\cap L^2(0;T ;H^4(\bbr^3))$ and the initial data $f_0$
satisfying
\begin{equation}\label{prop-1000}
\sum_{0\leq |\alpha|+|\beta|\leq
2}(1+|v|^{k})\nabla^\alpha_v\nabla_x^\beta f_0\in L^{2}(\bbr^3\times
\bbr^3),
\end{equation}
there exists a unique solution $f$ to
\eqref{approx-CS-NNS-10-10} such that
$(1+|v|^{k})\nabla^\alpha_v\nabla_x ^\beta f\in
L^\infty(0,T;L^{2}(\bbr^3\times \bbr^3))$. Furthermore, we obtain
the following estimate:
\[
\sup_{0<t\leq T}\sum_{0\leq |\alpha|+|\beta|\leq
2}\norm{(1+|v|^{k})\nabla^\alpha_v\nabla_x^\beta f}^2_{L^2(\bbr^3\times
\bbr^3)}
\]
\[
\leq \sum_{0\leq |\alpha|+|\beta|\leq
2}\norm{(1+|v|^{k})\nabla^\alpha_v\nabla_x ^\beta
f_0}^2_{L^2(\bbr^3\times \bbr^3)}\exp{\Big(C\int_0^T
\bke{1+\norm{\bar{u}(\tau)}_{H^{4}(\R^3)}}\,d\tau\Big)},
\]
where $C$ is a constant depending only on $k$.
\end{proposition}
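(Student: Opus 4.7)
The plan is an approximation argument that promotes the smooth/compactly supported result of Lemma \ref{well-f-without-support} to the stated generality, plus a uniqueness argument based on the characteristic representation already developed in Section 3.1.

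\textbf{Step 1 (Regularization of data).} I would first produce sequences $f_0^n \in C_c^\infty(\bbr^3\times\bbr^3)$ and $\bar u^n \in L^\infty(0,T;(H^3\cap C^\infty)(\bbr^3))\cap L^2(0,T;(H^4\cap C^\infty)(\bbr^3))$ by standard mollification (in $x,v$ for $f_0$; in space-time for $\bar u$, extending $\bar u$ by zero outside $(0,T)$) combined with a smooth cutoff in $(x,v)$ for $f_0$. The hypothesis \eqref{prop-1000} is preserved under mollification/truncation, so
\[
\sum_{|\alpha|+|\beta|\leq 2}\|(1+|v|^k)\nabla_v^\alpha\nabla_x^\beta(f_0^n-f_0)\|_{L^2}\to 0,\qquad \bar u^n\to\bar u \text{ in } L^\infty(0,T;H^3)\cap L^2(0,T;H^4).
\]

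\textbf{Step 2 (Uniform estimate on the approximate solutions).} Lemma \ref{well-f-without-support} yields smooth solutions $f^n$ to \eqref{approx-CS-NNS-10-10} with data $(f_0^n,\bar u^n)$ obeying
\[
\sup_{0<t\le T}\sum_{|\alpha|+|\beta|\leq 2}\|(1+|v|^k)\nabla_v^\alpha\nabla_x^\beta f^n(t)\|_{L^2}^2\le C^\ast,
\]
with $C^\ast$ independent of $n$ (since $\|\bar u^n\|_{L^2(0,T;H^4)}$ is uniformly bounded and the initial norms converge). This gives the desired final estimate in the limit by weak-$\ast$ lower semicontinuity, once the limit is identified.

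\textbf{Step 3 (Cauchy property in the weighted $L^2$).} The difference $g^{n,m}:=f^n-f^m$ solves
\[
\partial_t g^{n,m}+v\cdot\nabla_x g^{n,m}+\nabla_v\cdot\bke{(\bar u^n-v)g^{n,m}}=-(\bar u^n-\bar u^m)\cdot\nabla_v f^m,\qquad g^{n,m}(0)=f_0^n-f_0^m.
\]
Testing with $(1+|v|^{2k})g^{n,m}$ and repeating the computation that produced \eqref{vlasov-lemma-100} in Lemma \ref{well-f-without-support} (the only modification is the inhomogeneous right-hand side, bounded by Young's inequality using $\|\bar u^n-\bar u^m\|_{L^\infty}^2\|(1+|v|^k)\nabla_v f^m\|_{L^2}^2+\|(1+|v|^k)g^{n,m}\|_{L^2}^2$), followed by Gronwall, gives
\[
\|(1+|v|^k)g^{n,m}\|_{L^\infty(0,T;L^2)}^2\lesssim \|(1+|v|^k)(f_0^n-f_0^m)\|_{L^2}^2+\|\bar u^n-\bar u^m\|_{L^2(0,T;L^\infty)}^2\,C^\ast.
\]
Since $H^3(\bbr^3)\hookrightarrow L^\infty(\bbr^3)$, both right-hand terms vanish as $n,m\to\infty$, so $\{f^n\}$ is Cauchy in $L^\infty(0,T;L^2((1+|v|^{2k})dxdv))$; call its limit $f$.

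\textbf{Step 4 (Passage to the limit and lower semicontinuity).} The uniform bound from Step 2 gives, for each multi-index with $|\alpha|+|\beta|\leq 2$, a weak-$\ast$ limit in $L^\infty(0,T;L^2((1+|v|^{2k})dxdv))$; the strong limit from Step 3 identifies it as $\nabla_v^\alpha\nabla_x^\beta f$. Linearity of \eqref{approx-CS-NNS-10-10} lets me pass to the limit in the distributional formulation (note $\bar u^n f^n\to\bar u f$ in $L^1_{loc}$ thanks to $\bar u^n\to\bar u$ strongly in $L^2 L^\infty$ and the uniform bound on $f^n$). Weak-$\ast$ lower semicontinuity then transfers the estimate of Lemma \ref{well-f-without-support} to $f$.

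\textbf{Step 5 (Uniqueness).} Because $\bar u\in L^\infty(0,T;H^3)$ and $H^3(\bbr^3)\hookrightarrow W^{1,\infty}(\bbr^3)$, the vector field $\Xi(t,x,v)=(v,\bar u(t,x)-v)$ is Lipschitz in $(x,v)$, so the flow $\Phi$ of Section 3.1.1 is well defined and unique, and the representation \eqref{linear-form-f} forces uniqueness in the stated class. (Equivalently, applying the energy estimate of Step 3 to the difference of two candidate solutions with zero initial data gives zero.)

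The main obstacle is Step 3: one needs to close the difference estimate in a weighted $L^2$ space whose weight is compatible with the transport term $v\cdot\nabla_x$ and the drag structure $\nabla_v\cdot[(\bar u-v)\cdot]$. The computation is parallel to \eqref{kinetic-monet-est-00}, and the key structural ingredient is the Sobolev embedding $H^3\hookrightarrow L^\infty$, which converts the $L^2H^3$ convergence of $\bar u^n$ into the $L^2L^\infty$ bound needed to handle the inhomogeneity $-(\bar u^n-\bar u^m)\cdot\nabla_v f^m$ uniformly in $n,m$.
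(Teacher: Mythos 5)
Your proposal follows essentially the same route as the paper: mollify and cut off the data, solve the regularized problems via the characteristic flow, invoke Lemma \ref{well-f-without-support} for a uniform weighted estimate, and pass to the limit; your Steps 3 and 5 simply make explicit the stability (Cauchy) estimate and the uniqueness that the paper dispatches as a weak-limit extraction plus a ``standard argument,'' so this is a sharpening within the same method rather than a different one. One small inaccuracy: after time-mollification of the zero extension, $\bar u^n\to\bar u$ in $L^\infty(0,T;H^3(\bbr^3))$ need not hold, but your argument only uses the uniform bound and the $L^2(0,T;L^\infty(\bbr^3))$-type convergence, so nothing breaks.
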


\begin{proof}
For a proof, we introduce a cut-off function $\phi_{\delta}$ and
mollifier function $\eta_{\varepsilon}$ as follows. Let $\phi \in
C_c^\infty(\R^3\times \R^3)$ be a function such that
\begin{equation}\label{NNS-F}
\phi(x,v)=\left\{
\begin{array}{cl}
\displaystyle 1,\quad |x|^2+|v|^2\leq1,\\
\vspace{-3mm}\\
\displaystyle  0,\quad |x|^2+|v|^2\geq2,
\end{array}\right.
\end{equation}
and for $\delta>0$, we set $\phi_\delta(x,v ):= \phi(\delta x,
\delta v)$ for all $(x,v) \in \R^3\times \R^3$. For $\varepsilon>0$,
we define a mollifier $\theta_{\varepsilon}$ to satisfy
    \[
    \theta \in C_c^{\infty}, \quad \theta \geq 0, \quad
    \int_{\bbr^3}\theta=1 \quad \mbox{and} \quad  \theta_{\varepsilon}(x)=\varepsilon^3\theta \Big(\frac{x}{\varepsilon}
    \Big).
    \]
Thus we let  $
\eta^\varepsilon(x,v)=\theta^{\varepsilon}(x)\theta^{\varepsilon}(v)$,
    and let
    \[
   \bar{u}^\varepsilon=\bar{u}*\theta^{\varepsilon}(x)\quad \mbox{and} \quad
    f^\varepsilon_{\delta0}=(\phi_{\delta}f_{0})*\eta^{\varepsilon}(x,v).
    \]
Then, we define $(f_\delta^\varepsilon,\bar{u}^\varepsilon)$ as the
unique solution on $[0, T)$ to the approximated equation of the
linear equation \eqref{approx-CS-NNS-10-10}:
\begin{equation}\label{vlasov-ite}
\partial_t f_\delta^\varepsilon +  \nabla_x \cdot(v f_\delta^\varepsilon)
 + \nabla_v\cdot\big[(\bar{u}^\varepsilon-v)f_\delta^\varepsilon\big] = 0,\quad f_\delta^\varepsilon\Big|_{t=0} =
f^\varepsilon_{\delta0},
\end{equation} obtained by the method of characteristics (see
Section 3.1.1--3.1.2). We set
\[
\mathcal{X}:=\bket{g\in
L^{2}(\bbr^3\times \bbr^3) |\
(1+|v|^{k})\nabla^\alpha_v\nabla_x^\beta g\in L^{2}(\bbr^3\times
\bbr^3),\quad 0\leq |\alpha|+|\beta| \leq 2}.
\]
We then claim that
$f^\varepsilon_{\delta0}$ strongly converges to $f_0$ in
   $\mathcal{X}$,
that is,
\begin{equation}\label{ini-converge100}
\norm{(1+|v|^{k})\nabla^\alpha_v\nabla_x^\beta
(f^\varepsilon_{\delta0}-f_{0})}_{L^2(\bbr^3\times \bbr^3)}
\rightarrow 0,\quad \mbox{as}\ \delta\rightarrow 0 \ \mbox{and}\
\varepsilon\rightarrow 0.
\end{equation}
The proof of \eqref{ini-converge100} is as follows: As mentioned, due to the decay
assumption \eqref{Condition-1 : f_0-main} in Theorem \ref{thm1}, we
prove that $(1+|v|^k)\nabla_v^\alpha\nabla_x^\beta f_0 \in
L^2(\R^3\times \R^3)$. Indeed,
\[
\iint_{\R^3\times\R^3}|(1+|v|^k)\nabla_v^\alpha\nabla_x^\beta
f_{0}|^2\,dv\,dx
=\iint_{\R^3\times\R^3}|(1+|v|^p)\nabla_v^\alpha\nabla_x^\beta
f_{0}|^2\Big|\frac{1+|v|^k}{1+|v|^p}\Big|^2\,dv\,dx
\]
\[
\leq \int_{\R^3}
\Big|\frac{1+|v|^k}{1+|v|^p}\Big|^2\,dv\int_{\R^3}\norm{(1+|v|^p)\nabla_v^\alpha\nabla_x^\beta
f_{0}}^2_{L^\infty(\R^3)}\,dx
\]
\begin{equation}\label{aaaaa}
\leq C\int_{\R^3}\|(1+|v|^p)\nabla_v^\alpha\nabla_x^\beta
f_{0}\|^2_{L^\infty(\R^3)}\,dx<\infty,
\end{equation}
where we use $\int_{\R^3}
\Big|\frac{1+|v|^k}{1+|v|^p}\Big|^2\,dv<\infty$ (since $p>
\frac{2k+3}{2}$).
Considering the estimate \eqref{aaaaa}, due to the definition of $
f^\varepsilon_{\delta0}$, for any $\varsigma$ we can choose $R_0>0$
such that for a sufficiently small $\varepsilon$ and $\delta$
\begin{equation}\label{aaaaa-100}
\int_{\R^3}\int_{|v|> R_0}|(1+|v|^k)\nabla_v^\alpha\nabla_x^\beta
(f^\varepsilon_{\delta0}-f_{0})|^2\,dv\,dx<\frac{\varsigma}{2}.
\end{equation}
 On the other hand, we note
that
\begin{equation}\label{aaaaa-200}
\int_{\R^3}\int_{|v|\leq R_0}|(1+|v|^k)\nabla_v^\alpha\nabla_x^\beta
(f^\varepsilon_{\delta0}-f_{0})|^2\,dv\,dx<\frac{\varsigma}{2}.
\end{equation}
Via the relation \eqref{aaaaa-100} and \eqref{aaaaa-200}, choosing
sufficiently small $\varepsilon$ and $\delta$, we have for any
$\varsigma$
\[
\int_{\R^3\times \R^3}|(1+|v|^k)\nabla_v^\alpha\nabla_x^\beta
(f^\varepsilon_{\delta0}-f_{0})|^2\,dv\,dx<\varsigma,
\]
which implies \eqref{ini-converge100}. Also, due to Lemma
\ref{well-f-without-support}, the approximated solution
$(f_\delta^\varepsilon,\bar{u}^\varepsilon)$ for \eqref{vlasov-ite}
satisfies the following estimate
\[
\sup_{0<t\leq T}\sum_{0\leq |\alpha|+|\beta|\leq
2}\norm{(1+|v|^{k})\nabla^\alpha_v\nabla_x^\beta
f_\delta^\varepsilon}^2_{L^2(\bbr^3\times \bbr^3)}
\]
\[
\leq \sum_{0\leq |\alpha|+|\beta|\leq
2}\norm{(1+|v|^{k})\nabla^\alpha_v\nabla_x ^\beta
f_{\delta0}^{\varepsilon}}^2_{L^2(\bbr^3\times
\bbr^3)}\exp{\Big(C\int_0^T
\Big(1+\norm{\bar{u}(\tau)}_{W^{4,2}}\Big)\,d\tau\Big)}.
\]
When $\delta$ and $\varepsilon$ go to $0$, there exist
a weak limit $\bar{f}$ such that $f^\varepsilon_{\delta}\rightharpoonup \bar{f}$(at least up to a sequence) in the weighted space $\mathcal{X}$ and
$\bar{f}$ is a solution to the equation \eqref{approx-CS-NNS-10-10}
in a the sense of distribution and moreover the solution is unique
by the standard argument. Hence, we briefly give a proof for a
unique solvability of the solution $f$ to the linear equation
\eqref{approx-CS-NNS-10-10} with the initial data \eqref{prop-1000}
for a given $\bar{u}$.
\end{proof}

\section{Proof of Theorems}

In this section, we prove the existence of a local solution of the
second equation of \eqref{main}.
For given vector field $U:\R^3\rightarrow \R^3$ with $\mbox{div}\
U=0$ and tensor field $F:\R^3\rightarrow \R^3$, we first consider
the following non-Newtonian Stokes type equations with drift
 term
\begin{equation}\label{NNS-F}
\left\{
\begin{array}{cl}
\displaystyle \partial_t u -\nabla \cdot\big( G\bkt{|Du|^2} Du \big) +
(U\cdot\nabla) u +\nabla
p = F,\\
\vspace{-3mm}\\
\displaystyle  {\rm div}\, u=0,
\end{array}\right.
\quad \mbox{ in } \,\,Q_T:=\bbr^3\times (0,\, T)
\end{equation}
with the initial condition
\begin{equation}\label{NNS-F-ini}
u(x,0)=u_0(x), \qquad x\in\R^3.
\end{equation}

We show the local well-posedness of \eqref{NNS-F}-\eqref{NNS-F-ini}
in the next lemma.
\begin{lemma}\label{local-ex-u}
Suppose that $U\in L^{\infty}(0,T;H^{3}(\bbr^3))\cap
L^{2}(0,T;H^{4}(\bbr^3))$ and
 $F\in L^{2}(0,T;H^{2}(\bbr^3))$. There exists
$T^*:=T^*(\|u_0\|_{H^3},\|U\|_{(L^{\infty}_tH^{3}_x\cap
L^{2}_tH^{4}_x) (Q_T)}, \|F\|_{L^{2}_tH^{2}_x(Q_T)})>0$ such that
there exists unique solutions $u\in
L^{\infty}(0,T^*;H^{3}(\bbr^3))\cap L^{2}(0,T^*;H^{4}(\bbr^3))$ to
the equation \eqref{NNS-F}-\eqref{NNS-F-ini}. Moreover, $u$
satisfies
\begin{equation}\label{estimate-force}
\begin{aligned}
&\frac{d}{dt}||u||^2_{H^3(\bbr^3)} +\frac{m_0}{2}\int_{\bbr^3} (|\nabla^3 Du|^2+|\nabla^2 Du|^2 + |\nabla Du|^2 + | Du|^2)\,dx\\
& \leq C\|\nabla U\|_{L^\infty}\| u\|^2_{H^{3}} +C\|F\|^2_{H^{2}}
+C\|G\bkt{|Du|^2}\|_{L^\infty}(\|Du\|^2_{L^\infty}+\|Du\|_{L^\infty})\|\nabla^2
Du\|^2_{L^2}\\
&+C(\|G\bkt{|Du|^2}\|^2_{L^\infty}+\|G\bkt{|Du|^2}\|^6_{L^\infty})(\|Du\|^2_{L^\infty}+\|Du\|^4_{L^\infty})\|\nabla^2
Du\|^{6}_{L^2}.
\end{aligned}
\end{equation}
\end{lemma}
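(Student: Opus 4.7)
The plan is to construct a solution by an iteration scheme that freezes the nonlinear viscosity at the previous step. I would set $u^{0}:=u_{0}$ (extended trivially in time) and inductively define $u^{n+1}$ as the unique weak solution of the linear Stokes-type problem obtained by replacing $G\bkt{|Du|^2}$ in \eqref{NNS-F} with $G\bkt{|Du^{n}|^2}$, keeping the same drift, pressure and forcing. Since $G\bkt{s}\ge m_{0}>0$ by \eqref{G : property}, the linearized operator is uniformly elliptic with smooth symmetric coefficients, and the linear problem admits a unique solution in $L^{\infty}(0,T;H^{3}(\bbr^{3}))\cap L^{2}(0,T;H^{4}(\bbr^{3}))$ via a standard Galerkin procedure once the regularity of $G\bkt{|Du^{n}|^2}$ is inherited from the induction hypothesis through Lemma \ref{deriv-G} and Sobolev embedding. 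The first task is then to derive uniform-in-$n$ bounds in this space on a short time interval $[0, T^{*}]$.

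The central step is the energy estimate \eqref{estimate-force}. I would apply $\nabla^{\ell}$ to the equation for $\ell=0,1,2,3$ and test the resulting equation with $\nabla^{\ell}u$. The pressure is eliminated by $\ddiv\, u=0$. The forcing term produces $C\|F\|^{2}_{H^{2}}$ by Cauchy-Schwarz (moving one derivative off $\nabla^{3}F$ onto $\nabla^{3}u$ by integration by parts). The drift $(U\cdot\nabla)u$ generates commutators of the form $\int\nabla^{k}U\cdot\nabla^{\ell+1-k}u\cdot\nabla^{\ell}u\,dx$, which, via H\"older and Sobolev embedding $H^{3}\hookrightarrow W^{1,\infty}$, are absorbed into $C\|\nabla U\|_{L^{\infty}}\|u\|^{2}_{H^{3}}$. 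The heart of the argument is the nonlinear viscous term: for $\ell=0,1$ the two coercivity conditions $G\bkt{s}\ge m_{0}$ and $G\bkt{s}+2G^{\prime}\bkt{s}s\ge m_{0}$ in \eqref{G : property} directly yield the dissipation $\frac{m_{0}}{2}\int(|Du|^{2}+|\nabla Du|^{2})\,dx$ on the left. For $\ell=2,3$, I would invoke Lemma \ref{deriv-G} to decompose $\nabla^{\ell}(G\bkt{|Du|^2}Du)$ into its principal part $2G^{\prime}Du:\nabla^{\ell}Du\,Du+G\bkt{|Du|^2}\nabla^{\ell}Du$ plus the remainder $E_{\ell}$; the principal part, paired with $\nabla^{\ell}Du$ and combined with the second coercivity, again produces $\frac{m_{0}}{2}\int|\nabla^{\ell}Du|^{2}\,dx$, while $E_{\ell}$ is controlled through \eqref{eq-5 : derive G} and \eqref{eq-3 : derive G}. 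After H\"older, Young and standard interpolation, these remainder contributions produce exactly the cubic $\|G\|_{L^{\infty}}(\|Du\|^{2}_{L^{\infty}}+\|Du\|_{L^{\infty}})\|\nabla^{2}Du\|^{2}_{L^{2}}$ and sextic $(\|G\|^{2}_{L^{\infty}}+\|G\|^{6}_{L^{\infty}})(\|Du\|^{2}_{L^{\infty}}+\|Du\|^{4}_{L^{\infty}})\|\nabla^{2}Du\|^{6}_{L^{2}}$ terms on the right-hand side of \eqref{estimate-force}.

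With \eqref{estimate-force} established, local existence reduces to a scalar differential inequality. Writing $X(t):=\|u(t)\|^{2}_{H^{3}}$, the estimate has the schematic form $X^{\prime}(t)+\text{(dissipation)}\le a(t)X(t)+b(t)+CQ(X(t))$ where $a, b\in L^{1}(0,T)$ encode the norms of $U$ and $F$, and $Q$ is polynomial in $X$ by virtue of $\|G\bkt{|Du|^2}\|_{L^{\infty}}$ and $\|Du\|_{L^{\infty}}$ being bounded by powers of $X$ through Sobolev embedding. A standard comparison yields a time $T^{*}>0$ depending only on the stated norms of the data on which $X$ stays bounded; Aubin-Lions (Lemma \ref{lion}) then supplies compactness sufficient to extract a subsequential limit of $\{u^{n}\}$, which is identified as the desired solution. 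Uniqueness follows in the usual way: subtract the equations for two solutions, test with their difference, eliminate the drift via $\ddiv\, U=0$, invoke the monotonicity Lemma \ref{r-400} to control the difference of stresses by $m_{0}\|Du_{1}-Du_{2}\|^{2}_{L^{2}}$, and apply Gronwall. The main technical obstacle is the bookkeeping at the top order $\ell=3$, namely extracting $\frac{m_{0}}{2}\int|\nabla^{3}Du|^{2}\,dx$ cleanly while absorbing all lower-order commutators arising from differentiating the nonlinear coefficient $G\bkt{|Du|^2}$ three times, which is exactly what Lemma \ref{deriv-G} combined with \eqref{G : property} is designed to handle.
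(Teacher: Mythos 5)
Your proposal is correct and its analytic core coincides with the paper's: the estimate \eqref{estimate-force} is obtained exactly as in the Appendix, by testing $\nabla^{\ell}$ of the equation against $\nabla^{\ell}u$ for $\ell=0,\dots,3$, extracting the dissipation from the pointwise coercivity $G[|A|^2]|B|^2+2G'[|A|^2](A:B)^2\ge m_0|B|^2$ (a consequence of \eqref{G : property}), handling the differentiated coefficient through the decomposition of Lemma \ref{deriv-G} together with \eqref{eq-5 : derive G}--\eqref{eq-3 : derive G}, treating the drift by a commutator estimate giving $C\|\nabla U\|_{L^\infty}\|u\|^2_{H^3}$, closing with the scalar differential inequality for $\|u\|^2_{H^3}$, and proving uniqueness via the monotonicity Lemma \ref{r-400} and Gronwall. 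The only divergence is the construction: the paper performs the a priori estimates on the nonlinear equation itself and then invokes a Galerkin approximation (deferring details to the cited reference), whereas you build the solution by an iteration that freezes the viscosity coefficient at $G[|Du^{n}|^2]$ and passes to the limit by Aubin--Lions. That route also works, but note one bookkeeping point: for the frozen-coefficient iterates the uniform bound is not literally \eqref{estimate-force}, since at top order the three derivatives falling on $G[|Du^{n}|^2]$ produce terms with $\nabla^{4}u^{n}$, which is only in $L^{2}$ in time; these must be absorbed using the inductive $L^{2}_tH^{4}_x$ bound and Young's inequality before Gronwall, and \eqref{estimate-force} itself is then recovered for the limit solution (or for the Galerkin/regularized approximations, as in the paper). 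Conversely, at the linear level of your scheme only $G\ge m_0$ is needed for coercivity, so the full quadratic-form inequality enters only when you derive \eqref{estimate-force}; with that caveat addressed, the two arguments buy the same result with comparable effort.
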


\begin{proof}
The proof is similar to that in \cite[Proposition 3.1]{KKK2}. The only difference is the control of the class of $F$. For the sake
of convenience, we give a proof in Appendix.
\end{proof}

For proof of Theorem \ref{thm1} using Schauder fixed point theorem (Lemma \ref{Schauder}), we introduce a function space $X_M$ defined as follows: 
\begin{equation*}\label{fixed pt space}
 X_M:=\{ \ u:\bbr^3 \times [0,T)\rightarrow \bbr^3:
 \|u\|_{X}<M \quad \mbox{and}\quad \|u_t\|^2_{L^2(0,T;L^2(\bbr^3))}<\infty\}.
\end{equation*}
with the norm
$\|u\|^2_{X}:=\|u\|^2_{L^{\infty}(0,T;H^{3}(\bbr^3))}+\frac{m_0}{2}\|u\|^2_{L^2(0,T;H^{4}(\bbr^3))}$.

\begin{thm1}
First, we define a map $ \Theta: X_M\rightarrow X_M $ by $\Theta(\bar{u})=(u)$:
\begin{align*}\label{approx-CSNNS-}
\begin{aligned}
& \partial_t f +  v\cdot \nabla_x  f
 + \nabla_v\cdot\big[(\bar{u}-v)f\big] = 0,
  \quad (x, v) \in \bbr^3 \times \bbr^{3},~~t > 0, \\
& \partial_t u-\nabla \cdot (G[|Du|^2]D u)+(\bar{u}\cdot
\nabla)u+\nabla_x p  =-\int_{\bbr^3} (\bar{u} - v) f dv, \cr &
\nabla_x \cdot u = 0.
\end{aligned}
\end{align*}

\noindent $\bullet$ (Step A: $\Theta$ is well-defined): We set
\[
g(x):=C[G(x^2)(x^2+x)+(G^2(x^2)+G^6(x^2))(x^2+x^4)x^4] \quad
\mbox{and}\quad g_M:=\sup_{0\leq x\leq M} g(x),
\]
\[
F:= \int_{\bbr^3}(\bar{u}-v)f\,dv.
\]
From the estimate \eqref{estimate-force} in Lemma \ref{local-ex-u},
we know
\begin{equation*}\label{final-est-H3-local-100}
\begin{aligned}
\frac{d}{dt}&||u||^2_{H^3(\bbr^3)} +\frac{m_0}{2}\int_{\bbr^3} (|\nabla^3 Du|^2+|\nabla^2 Du|^2 + |\nabla Du|^2 + | Du|^2)\,dx\\
& \leq C\|\nabla \bar{u}\|_{L^\infty}\| u\|^2_{H^{3}}
+C\|F\|^2_{H^{2}} +Cg(\|u\|_{H^3(\bbr^3)})\|u\|^{2}_{H^3(\bbr^3)}
\end{aligned}
\end{equation*}
\begin{equation}\label{final-est-H3-local-20-200}
\begin{aligned}
 \leq C(M\| u\|^2_{H^{3}}+g_M\|u\|^{2}_{H^3(\bbr^3)})
+C\|F\|^2_{H^{2}}.
\end{aligned}
\end{equation}
Once we have $F\in L^{2}(0,T;H^{2}(\bbr^3))$, we can choose $0<T_1\leq T^*$ ($T^*$ given in Lemma
\ref{local-ex-u}) such that
\[
\|u\|^2_{L^\infty(0,T_1;H^3(\bbr^3))}+\frac{m_0}{2}\|u\|^2_{L^2(0,T_1;H^4(\bbr^3))}\leq
M.
\]
Now, we check $F\in L^{2}(0,T;H^{2}(\bbr^3))$. Indeed, we note that
\[
\abs{\int_{\R^3}vf\,dv}=\abs{\int_{|v|\leq1}vf\,dv+\int_{|v|>1}vf\,dv}
\]
\[
\leq
\int_{|v|\leq1}f\,dv+\Big(\int_{|v|>1}|v|^{2k}f^2\,dv\Big)^{\frac{1}{2}}\Big(\int_{|v|>1}\frac{1}{|v|^{2(k-1)}}\,dv\Big)^{\frac{1}{2}}
\]
\[
\leq
\int_{|v|\leq1}f\,dv+C\Big(\int_{|v|>1}|v|^{2k}f^2\,dv\Big)^{\frac{1}{2}}.
\]
Using Jensen and H\"{o}lder's inequalities, we have
\[
\int_{\bbr^3}\abs{\int_{\R^3}vf\,dv}^2\,dx\leq
C\int_{\bbr^3}\int_{|v|\leq1}f^2\,dv\,dx+C\int_{\bbr^3}\Big(\int_{|v|>1}|v|^{2k}f^2\,dv\Big)\,dx
\]
\[
\leq C\iint_{\bbr^3\times\bbr^3}(1+|v|^{2k})f^2\,dv\,dx.
\]
Thus, we have
\[
\norm{\bar{u} \int_{\R^3}vf\,dv}_{L^2(\bbr^3)}\leq
\|\bar{u}\|_{L^\infty(\bbr^3)}\norm{\int_{\R^3}vf\,dv}_{L^2(\bbr^3)}
\]
\[
\leq
C\|\bar{u}\|_{L^\infty(\bbr^3)}\bke{\|(1+|v|^{k})f\|_{L^2(\bbr^3\times
\R^3)}}.
\]
Similarly,
\[
\norm{\bar{u} \int_{\R^3}f\,dv}_{L^2(\bbr^3)}\leq
C\|\bar{u}\|_{L^\infty(\bbr^3)}\bke{\|(1+|v|^{k})f\|_{L^2(\bbr^3\times
\R^3)}}.
\]
Using the same method, we can check that
\begin{equation}\label{moment-100-10}
\norm{\int_{\R^3}f\,dv}_{H^2(\bbr^3)}+
\norm{\int_{\R^3}vf\,dv}_{H^2(\bbr^3)}\leq C\sum_{ 0\leq \beta\leq
2}\norm{(1+|v|^{k})\partial^\beta_xf}_{L^2(\bbr^3\times \R^3)}.
\end{equation}
Using the estimate \eqref{moment-100-10}, we get
\[
\norm{\bar{u} \int_{\R^3}f\,dv}_{H^2(\bbr^3)}\leq
\|\bar{u}\|_{H^2(\bbr^3)}\norm{\int_{\R^3}f\,dv}_{L^2(\bbr^3)}+
\|\bar{u}\|_{L^2(\bbr^3)}\norm{\int_{\R^3}f\,dv}_{H^2(\bbr^3)}
\]
\[
\leq C\sum_{ 0\leq \beta\leq
2}\norm{\bar{u}}_{H^2(\bbr^3)}\norm{(1+v^{k})\partial^\beta_xf}_{L^2(\bbr^3\times
\R^3)}.
\]
Finally, using the estimate \eqref{cpt-f-vlasov}, we obtain
\begin{equation}\label{drag-est}
\int_0^{T_1}\norm{\int_{\bbr^3}(\bar{u}-v)f\,dv}^2_{H^{2}(\bbr^3)}\,dt
\end{equation}
\[
\leq
C\sum_{ 0\leq \beta\leq
2}\int_0^{T_1}\bke{\norm{\bar{u}}^2_{H^2(\bbr^3)}+1}\norm{(1+|v|^{k})\partial^\beta_xf}^2_{L^2(\bbr^3\times
\R^3)}\,dt
\]
\[
\leq C\sum_{ 0\leq \beta\leq 2}\sup_{0<\tau\leq
T_1}\norm{(1+|v|^{k})\partial^\beta_xf(\tau)}^2_{L^2(\bbr^3\times
\R^3)}\int_0^{T_1}\bke{\norm{\bar{u}}^2_{H^2(\bbr^3)}+1}\,dt<\infty,
\]
which implies $F\in L^2(0,T_1;H^2(\R^3))$. 
Note that
\[
\int_0^T\|u\|^2_{H^4}\,dt\leq T\sup_{0\leq t\leq
T}\|u(t)\|^2_{H^3(\bbr^3)}+\int_0^T\|\nabla u\|^2_{H^3}\,dt
\]
\begin{equation}\label{thm2-est-100000}
\leq T\sup_{0\leq t\leq T}\|u(t)\|^2_{H^3(\bbr^3)}+ C\int_0^T\|D
u\|^2_{H^3}\,dt,
\end{equation} where Korn's inequality was used.
Applying Grownwall's inequality to
\eqref{final-est-H3-local-20-200}, we get
\begin{equation}\label{local-global-est}
\sup_{0\leq t\leq T_1}\|u(t)\|^2_{H^3(\bbr^3)}+\frac{m_0}{2}\int_0^{T_1}\|u\|^2_{H^4(\R^3)}\,dt
\end{equation}
\[
\leq C\bke{1+ \frac{T_1m_0}{2}}\|u_0\|^2_{H^3(\bbr^3)}e^{CT_1}+\int_0^{T_1}\|F\|^2_{H^2}e^{C(T_1-t)}\,dt
\]
\[
\leq C\bke{1+
\frac{T_1m_0}{2}}e^{CT_1}\Big[\|u_0\|^2_{H^3(\bbr^3)}+\sum_{ 0\leq
\beta\leq
2}\int_0^{T_1}(\|\bar{u}\|^2_{H^2(\bbr^3)}+1)\|(1+|v|^{k})\partial^\beta_xf\|^2_{L^2(\bbr^3\times
\R^3)}\,dt\Big]
\]
\[
\leq C\bke{1+
\frac{T_1m_0}{2}}e^{CT_1+C\sqrt{T_1}\sqrt{M}}\Big[\|u_0\|^2_{H^3(\bbr^3)}+\sum_{
0\leq \beta\leq 2}\|(1+|v|^k)\partial^\beta_x
f_0\|^2_{L^2(\bbr^3\times
\bbr^3)}(1+M)T_1\Big],
\]
where we use the estimate \eqref{drag-est} in second inequality with
\eqref{thm2-est-100000} and the result in Proposition
\ref{vlasov_pro} and H\"{o}lder's inequality for $\bar{u}\in X_M$ in
third inequality. If we choose a sufficiently small $0<\tilde{T}\leq
T_1$ such that
\[
C(1+
\frac{\tilde{T}m_0}{2})e^{C\tilde{T}+C\sqrt{\tilde{T}}\sqrt{M}}\Big[\|u_0\|^2_{H^3(\bbr^3)}+\sum_{
0\leq \beta\leq 2}\|(1+|v|^k)\partial^\beta_x
f_0\|^2_{L^2(\bbr^3\times \bbr^3)}(1+M)\tilde{T}\Big]<M,
\]
then $\|u\|_{X_M}<M$. Lastly, we also prove that $u_t\in
L^2(0,\tilde{T};L^2(\bbr^3))$. It is enough to show
$F:=\int_{\R^3}(\bar{u}-v)f\, dv \in L^2(0,\tilde{T};L^2(\R^3))$ due
to \eqref{reg-time-300} and \eqref{convec-2}. Indeed, by the
estimate \eqref{drag-est} for the control of the drag force and $u
\in L^\infty(0,\tilde{T};H^3(\bbr^3))\cap L^2(0,T_1;H^4(\bbr^3))$,
it follows that
\[
\int_0^{\tilde{T}}\int_{\bbr^3}|\partial_t
u|^2\,dxdt+\int_{\bbr^3}\tilde{G}[|Du(\cdot, \tilde{T})|^2]\,dx
\]
\[
\leq \int_{\bbr^3}\tilde{G}[|Du_0|^2]\,dx+
C\int_0^{\tilde{T}}\int_{\bbr^3}|\bar{u}|^2|\nabla
u|^2\,dxdt+C\int_0^{\tilde{T}}\int_{\bbr^3}|F|^2\,dxdt
\]
\[
\leq \underbrace{\int_{\bbr^3}\tilde{G}[|Du_0|^2]\,dx}_{<\infty}+
C\underbrace{\sup_{0<\tau\leq
T_1}\|u(\tau)\|^2_{H^{2}}\int_0^{\tilde{T}} \| \nabla
u\|^2_{L^{2}}}_{<\infty}+C\underbrace{\int_0^{\tilde{T}}\int_{\bbr^3}|F|^2\,dxdt}_{<\infty}<\infty,
\]
which implies $u_t\in L^2(0,\tilde{T};L^2(\bbr^3))$.

\noindent $\bullet$ (Step B: $\Theta$ is continuous w.r.t
$L^2(0,\tilde{T}:L^2(\bbr^3))$--topology): From Corollary \ref{thm :
contraction} in Section 6, we know that $\Theta : X_M \mapsto X_M$
is continuous w.r.t the topology inherited from
$L^2(0,\tilde{T}:L^2(\bbr^3))$. Due to the Aubin-Lions Lemma, $X_M$
is a compact subset of $L^2(0,\tilde{T}:L^2(\bbr^3))$ and $X_M$ is
clearly convex. Hence, the Schauder's fixed point theorem gives us
the existence of solutions for the system \eqref{main} and the
contraction property \eqref{eq-1 : contraction} says the uniqueness
of solutions.
\end{thm1}


\section{Contraction of the iteration with respect to $L^2(0,T;L^2(\bbr^3))$}

\begin{lemma}
Assume $u_0 \in H^3(\bbr^3)$ and, for $i=1,~2$, $\bar{u}_i\in
L^\infty(0,T;H^3(\bbr^3))\cap L^2(0,T; H^4(\bbr^3)) $. Let $f_i \in
AC_2(0,T; \mathcal{P}_2(\bbr^3\times\bbr^3)$ be the solution of
\eqref{equ - f} with a given $f_0 \in \mathcal{P}_2$ satisfying
\eqref{Condition-1 : f_0} and set
$\rho_i(t,x):=\int_{\bbr^3}f_i(t,x,v)dv$ for $i=1,2$. Suppose that
$u_i\in L^\infty(0,T;H^3(\bbr^3))\cap L^2(0,T; H^4(\bbr^3)) $ is the
solution of
\begin{equation*}
\begin{aligned}
 \partial_t u_i-\Delta u_i+(\bar{u}_i\cdot \nabla)u_i +\nabla_x p_i  =-\int_{\bbr^3} (\bar{u}_i - v)
f_i dv, \quad  \nabla_x \cdot u_i = 0\quad \mbox{in }\,\,Q_T:=\R^3\times(0,T)
\end{aligned}
\end{equation*}
with initial condition $u_i(0)=u_0$.
Then, we have
\begin{equation}\label{diff-solu-unique}
\|u_1-u_2 \|_{L^\infty(0,T;L^2(\bbr^3))}^2\leq
e^{4C_1T}(C_1T+C_3)\|\bar{u}_1-\bar{u}_2
\|_{L^2(0,T;L^2(\bbr^3))}^2,
\end{equation}
where $$C_1:=C_1\bke{\|\rho_1\|_{L^\infty(Q_T)},  \|
\bar{u}_2 \|_{L^\infty_tH^3_x(Q_T)},\| u_1-u_2
\|_{L^\infty_tH^3_x(Q_T)}}$$ and
$$C_3:=C_3\bke{\|\rho_1\|_{L^\infty(Q_T)},\| \bar{u}_2
\|_{L^\infty_tH^3_x(Q_T)} }.$$ The estimate \eqref{diff-solu-unique}
implies that there exists $T\ll 1$ such that
\begin{equation*}
\begin{aligned}
\norm{u_1-u_2 }_{L^2(0,T;L^2(\bbr^3))}
&\leq \frac{1}{2}\norm{\bar{u}_1-\bar{u}_2 }_{L^2(0,T;L^2(\bbr^3))}.
\end{aligned}
\end{equation*}
\end{lemma}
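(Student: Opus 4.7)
\medskip
\noindent\textbf{Proposed proof.} Set $w := u_1-u_2$, $\bar w := \bar u_1-\bar u_2$, and $\pi := p_1-p_2$. Subtracting the two fluid equations one obtains
\[
\partial_t w - \Delta w + (\bar u_1\cdot\nabla)u_1 - (\bar u_2\cdot\nabla)u_2 + \nabla\pi
= -3\!\int_{\bbr^3}\!\bigl[(\bar u_1-v)f_1 - (\bar u_2-v)f_2\bigr]\,dv,
\qquad \nabla\cdot w=0,\ w(0)=0.
\]
The plan is to test this identity against $w$ in $L^2(\bbr^3)$, absorb the Dirichlet term $\|\nabla w\|_{L^2}^2$ on the left, and use the divergence-free condition to kill $\nabla\pi$. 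The convective contribution is handled by writing
$(\bar u_1\cdot\nabla)u_1-(\bar u_2\cdot\nabla)u_2 = (\bar w\cdot\nabla)u_1 + (\bar u_2\cdot\nabla)w$; the second summand vanishes after testing because $\nabla\cdot\bar u_2=0$, and the first is controlled by $\|\bar w\|_{L^2}\|\nabla u_1\|_{L^\infty}\|w\|_{L^2}$ via Sobolev embedding $H^3\hookrightarrow W^{1,\infty}$.

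For the drag source I would split
\[
(\bar u_1-v)f_1 - (\bar u_2-v)f_2 \;=\; \bar w\,f_1 + (\bar u_2-v)(f_1-f_2),
\]
so that pairing with $w$ yields $\bar w\,\rho_1$ (easily bounded by $\|\rho_1\|_{L^\infty}\|\bar w\|_{L^2}\|w\|_{L^2}$) plus the truly delicate term $\mathcal D:=\iint w(x)\!\cdot\!(\bar u_2(t,x)-v)\,d(f_1-f_2)(x,v)$. To estimate $\mathcal D$, I would introduce an optimal coupling $\gamma_t\in\Gamma_o(f_{1,t},f_{2,t})$ and write
\[
\mathcal D = \iint \bigl[\Phi(x_1,v_1)-\Phi(x_2,v_2)\bigr]\,d\gamma_t,\qquad
\Phi(x,v):=w(x)\cdot(\bar u_2(t,x)-v).
\]
Because $\Phi$ grows linearly in $v$, it is not globally Lipschitz, so I would instead bound the integrand pointwise by
$\bigl(\|\nabla w\|_{L^\infty}(|\bar u_2|+|v_1|)+\|w\|_{L^\infty}\|\nabla\bar u_2\|_{L^\infty}\bigr)|x_1-x_2| + \|w\|_{L^\infty}|v_1-v_2|$,
and then apply Cauchy--Schwarz against $\gamma_t$ to get, schematically,
\[
|\mathcal D| \;\lesssim\; \Bigl(\|w\|_{H^3}\|\bar u_2\|_{H^3}+\|w\|_{H^3}\|(|v|^2 f_1)\|_{L^1}^{1/2}\Bigr)\,W_2(f_{1,t},f_{2,t}).
\]

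At this point I would invoke estimate \eqref{eq 21 : continuity} of the previous lemma, which gives
$W_2^2(f_{1,t},f_{2,t})\le e^{(2+\|\bar u_2\|_{H^3}^2)t}\|\rho_1\|_{L^\infty}\int_0^t\|\bar w(s)\|_{L^2}^2\,ds$, converting the $W_2$ factor into a control by $\|\bar w\|_{L^2_t L^2_x}$. Combining all pieces and using Young's inequality to absorb the $\|\nabla w\|_{L^2}^2$ term one arrives at a differential inequality of the form
\[
\tfrac{d}{dt}\|w(t)\|_{L^2}^2 \;\le\; 4C_1\|w(t)\|_{L^2}^2 + C_1\|\bar w(t)\|_{L^2}^2 + C_3\!\int_0^t\|\bar w(s)\|_{L^2}^2\,ds,
\]
with $C_1$ depending on $\|\rho_1\|_{L^\infty}$, $\|\bar u_2\|_{L^\infty_t H^3_x}$, $\|u_1-u_2\|_{L^\infty_t H^3_x}$ (the latter enters when bounding $\|w\|_{W^{1,\infty}}$), and $C_3$ on $\|\rho_1\|_{L^\infty}$ and $\|\bar u_2\|_{L^\infty_t H^3_x}$ via the exponential factor in the Wasserstein bound. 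Grönwall's inequality applied with $w(0)=0$ then yields the stated estimate \eqref{diff-solu-unique}, and integrating once more in time gives $\|w\|_{L^2_t L^2_x}^2\le T\,e^{4C_1 T}(C_1 T+C_3)\|\bar w\|_{L^2_t L^2_x}^2$, which is $\le \tfrac14\|\bar w\|_{L^2_t L^2_x}^2$ once $T$ is chosen small enough, producing the desired contraction factor $1/2$.

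The principal obstacle is clearly the analysis of $\mathcal D$: one must turn a pairing of the \emph{unweighted} $L^2$ test field $w$ against the velocity-weighted difference $(\bar u_2-v)(f_1-f_2)$ into something controlled by $W_2(f_1,f_2)$. The trick is to transfer the derivative onto $\Phi$ via the optimal coupling and to pay for the linear growth in $v$ with the finite second moment of $f_1$ (furnished by Proposition \ref{vlasov_pro}), while simultaneously paying for derivatives of $w$ and $\bar u_2$ with the $H^3$ bounds inherited from the iteration space $X_M$.
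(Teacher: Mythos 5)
Your overall architecture matches the paper's: subtract the two fluid equations, test with $w=u_1-u_2$, split the drag as $\bar w f_1+(\bar u_2-v)(f_1-f_2)$, rewrite the singular part through an optimal coupling $\gamma_t\in\Gamma_o(f_{1,t},f_{2,t})$, and close with the Wasserstein stability estimate \eqref{eq 21 : continuity} and Gronwall. The gap is in your treatment of $\mathcal D$. By pulling $\|w\|_{L^\infty}$ and $\|\nabla w\|_{L^\infty}$ out and applying Cauchy--Schwarz in $\gamma_t$, you obtain $|\mathcal D|\lesssim K\,W_2(f_{1,t},f_{2,t})$ with a prefactor $K$ that depends only on $\|w\|_{H^3}$, $\|\bar u_2\|_{H^3}$ and the second moment of $f_1$, i.e.\ $K$ is an $O(1)$ constant that does \emph{not} vanish as $\bar u_1\to\bar u_2$. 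Since \eqref{eq 21 : continuity} only gives $W_2\lesssim\bigl(\int_0^t\|\bar w(s)\|_{L^2}^2\,ds\bigr)^{1/2}$, the term $K\,W_2$ is linear, not quadratic, in the small quantities: any Young splitting leaves either a constant $\sim K^2$ or a term $\sim K\bigl(\int_0^t\|\bar w\|_{L^2}^2\bigr)^{1/2}$ in the differential inequality. Gronwall then yields at best $\|w\|_{L^\infty_tL^2_x}^2\lesssim \|\bar w\|_{L^2_tL^2_x}^2+KT\|\bar w\|_{L^2_tL^2_x}$, and the second term, being linear in $\|\bar w\|_{L^2_tL^2_x}$, can never be dominated by $\tfrac14\|\bar w\|_{L^2_tL^2_x}^2$ uniformly, no matter how small $T$ is. So your claimed differential inequality, the quadratic estimate \eqref{diff-solu-unique}, and the contraction do not follow from your bound on $\mathcal D$; also note that your announced ``absorb $\|\nabla w\|_{L^2}^2$'' step has nothing to absorb, since your $\mathcal D$ estimate produces no $\varepsilon\|\nabla w\|_{L^2}^2$ term.

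The missing idea is to keep $w$ \emph{inside} the coupling integral and preserve the quadratic structure, which is exactly what the paper does. For the piece $\iint w(x_1)\cdot(v_1-v_2)\,d\gamma_t$ one applies Young pointwise, $|w(x_1)||v_1-v_2|\le\tfrac12|w(x_1)|^2+\tfrac12|v_1-v_2|^2$, and uses that the first marginal of $\gamma_t$ is $f_1$ to get $\|\rho_1\|_{L^\infty}\|w\|_{L^2}^2+2Q(t)$. For the piece $\iint\bigl(w(x_1)-w(x_2)\bigr)\cdot v_2\,d\gamma_t$ (and similarly for the $\bar u_2$ part), one uses the maximal-function Lipschitz estimate \eqref{eq 1 : maximal function}, $|w(x_1)-w(x_2)|\lesssim|x_1-x_2|\bigl(M\nabla w(x_1)+M\nabla w(x_2)\bigr)$, together with the bounds on $m_2f_i$ and \eqref{eq 2 : maximal function}, producing $\varepsilon\|\nabla w\|_{L^2}^2+\varepsilon\|\nabla w\|_{L^\infty}^2Q(t)+\tfrac{1}{\varepsilon}Q(t)$; the $\varepsilon\|\nabla w\|_{L^2}^2$ term is then absorbed into the viscous dissipation coming from the monotonicity of the stress (Lemma \ref{r-400}). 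Every resulting term is of the form (constant)$\times\bigl(\|w\|_{L^2}^2+\|\bar w\|_{L^2}^2+Q(t)\bigr)$, which is what makes \eqref{eq 21 : continuity} convert $Q(t)$ into $C\int_0^t\|\bar w\|_{L^2}^2\,ds$ and yields \eqref{diff-solu-unique} and the $\tfrac12$-contraction for $T\ll1$. Your write-up, as it stands, loses this structure at the decisive step.
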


\begin{proof}
 We consider the equation for $\tilde{u}:=u_1-u_2$ and $\tilde{p}:=p_1-p_2$,
\begin{equation*}
\begin{aligned}
&\partial_t \tilde{u}-\nabla\cdot[G(|Du_1|^2)Du_1 -G(|Du_2|^2)Du_2] +\nabla \tilde{p} + (u_1\cdot\nabla)\tilde{u}\\
= &-((\bar{u}_1-\bar{u}_2)\cdot\nabla)u_2-\int_{\bbr^3}
f_1dv(\bar{u}_1-\bar{u}_2) + \int_{\bbr^3}(f_2-f_1)dv\bar{u}_2 + \int_{\bbr^3}
v(f_1-f_2)dv,
\end{aligned}
\end{equation*}
and we obtain
\begin{equation*}\label{eq 9 : continuity}
\begin{aligned}
\frac{1}{2}\frac{d}{dt}\|\tilde{u}\|_{L^2}^2+m_0 \|\nabla \tilde{u}
\|_{L^2}^2 &= -\int_{\bbr^3}\tilde{u}\cdot((\bar{u}_1-\bar{u}_2)\cdot
\nabla)u_2 dx
-\int_{\bbr^3} \rho_1(\bar{u}_1-\bar{u}_2)\cdot\tilde{u}dx + A(t)\\
&=I_1+I_2 + A(t),
\end{aligned}
\end{equation*}
where
\begin{equation*}
\begin{aligned}
A(t)&=\iint_{\bbr^3 \times \bbr^3} \tilde{u}\cdot v(f_1-f_2)dvdx - \iint_{\bbr^3 \times \bbr^3} \tilde{u}\cdot \bar{u}_2(f_1-f_2)dvdx.
\end{aligned}
\end{equation*}
By H\"{o}lder and Young's inequalities, we note,
\begin{equation*}\label{eq 10 : continuity}
\begin{aligned}
I_1\leq C\|\nabla u_2 \|_{L^\infty}\big(\|\tilde{u} \|_{L^2}^2
+\|\bar{u}_1-\bar{u}_2 \|_{L^2}^2 \big),
\end{aligned}
\end{equation*}
and
\begin{equation*}\label{eq 11 : continuity}
\begin{aligned}
I_2\leq C\|\rho_1 \|_{L^\infty}\big(\|\tilde{u} \|_{L^2}^2
+\|\bar{u}_1-\bar{u}_2 \|_{L^2}^2 \big).
\end{aligned}
\end{equation*}
On the other hand, we have
\begin{equation*}
\begin{aligned}
A(t)&=\iint_{\bbr^3 \times \bbr^3} \tilde{u}\cdot v(f_1-f_2)dvdx - \iint_{\bbr^3 \times \bbr^3} \tilde{u}\cdot \bar{u}_2(f_1-f_2)dvdx\\
& := A_1 +A_2.
\end{aligned}
\end{equation*}
We note that
\begin{equation*}
\begin{aligned}
A_1& = \iint_{\bbr^3 \times \bbr^3}\iint_{\bbr^3 \times \bbr^3} [\tilde{u}(x_1,t)\cdot v_1 - \tilde{u}(x_2,t)\cdot v_2] d\gamma_t(x_1,v_1,x_2,v_2)\\
&=\iint_{\bbr^3 \times \bbr^3}\iint_{\bbr^3 \times \bbr^3} \tilde{u}(x_1,t)\cdot (v_1-v_2) + (\tilde{u}(x_1,t)-\tilde{u}(x_2,t)\cdot v_2 d\gamma_t(x_1,v_1,x_2,v_2)\\
&:= A_{11}+A_{12}.
\end{aligned}
\end{equation*}
Now we estimate
\begin{equation*}
\begin{aligned}
|A_{11}|&\leq 2\iint_{\bbr^3 \times \bbr^3}\iint_{\bbr^3 \times \bbr^3} |\tilde{u}(x_1,t)|^2 + |v_1-v_2|^2 d\gamma_t(x_1,v_1,x_2,v_2) \\
&\leq 2\bke{\|\rho_1\|_{L^\infty} \| \tilde{u}\|_{L^2}^2 + Q(t)}, 
\end{aligned}
\end{equation*}
and we exploit \eqref{eq 2 : maximal function} and \eqref{eq 1 : maximal function} to get
\begin{equation*}
\begin{aligned}
|A_{12}|&\leq \iint_{\bbr^3 \times \bbr^3}\iint_{\bbr^3 \times \bbr^3} (M \nabla \tilde{u}(x_1,t)+ M \nabla \tilde{u}(x_2,t))|x_1-x_2||v_2| d\gamma_t(x_1,v_1,x_2,v_2) \\
&\leq  \varepsilon \iint_{\bbr^3 \times \bbr^3}\iint_{\bbr^3 \times \bbr^3} \big( (M\nabla \tilde{u})^2(x_1,t)+ (M\nabla \tilde{u})^2(x_2,t) \big)|v_2|^2 d\gamma_t(x_1,v_1,x_2,v_2)\\
&+ \frac{1}{\varepsilon} \iint_{\bbr^3 \times \bbr^3}\iint_{\bbr^3 \times \bbr^3} |x_1-x_2|^2 d\gamma_t(x_1,v_1,x_2,v_2)\\
&\leq  \varepsilon\iint_{\bbr^3 \times \bbr^3}\iint_{\bbr^3 \times \bbr^3} (M\nabla \tilde{u})^2(x_1,t)(|v_1-v_2|^2 + |v_1|^2) d\gamma_t(x_1,v_1,x_2,v_2)\\
&+ \varepsilon\iint_{\bbr^3 \times \bbr^3} (M\nabla \tilde{u})^2(x,t) |v|^2 f_2(x,v,t)dxdv+  \frac{1}{\varepsilon} Q(t)\\
&\leq \varepsilon \bke{ \|\nabla \tilde{u} \|_{L^\infty}^2 Q(t) + (\|m_2f_1\|_{L^\infty} +  \|m_2f_2\|_{L^\infty}) \|M\nabla \tilde{u}\|_{L^2}^2 } + \frac{1}{\varepsilon} Q(t)\\
&\leq \varepsilon \bke{ \|\nabla \tilde{u} \|_{L^\infty}^2 Q(t) + (\|m_2f_1\|_{L^\infty} +  \|m_2f_2\|_{L^\infty}) \|\nabla \tilde{u} \|_{L^2}^2 }
+ \frac{1}{\varepsilon} Q(t).
\end{aligned}
\end{equation*}
Adding above estimates,
\[
|A_1|\leq 2(\|\rho_1\|_{L^\infty} \| \tilde{u}\|_{L^2}^2 + Q(t))
+ \varepsilon \big( \|\nabla \tilde{u} \|_{L^\infty}^2 Q(t)
\]
\begin{equation}\label{eq 18 : continuity}
\begin{aligned}
+ \bke{\|m_2f_1\|_{L^\infty} +  \|m_2f_2\|_{L^\infty}} \|\nabla \tilde{u} \|_{L^2}^2 \big) + \frac{1}{\varepsilon} Q(t).
\end{aligned}
\end{equation}
On the other hand,
\[
A_2= \iint_{\bbr^3 \times \bbr^3}\iint_{\bbr^3 \times \bbr^3} \tilde{u}(x_1,t)\cdot (\bar{u}_2(x_1,t)-\bar{u}_2(x_2,t))
+ (\tilde{u}(x_1,t)-\tilde{u}(x_2,t))\cdot \bar{u}_2(x_2,t) d\gamma_t(x_1,v_1,x_2,v_2)
\]
\[
\leq \|\nabla \bar{u}_2\|_{L^\infty}\iint_{\bbr^3 \times \bbr^3}\iint_{\bbr^3 \times \bbr^3} \bke{|\tilde{u}(x_1,t)|^2 + |x_1-x_2|^2}  d\gamma_t(x_1,v_1,x_2,v_2)
\]
\[
+ \| \bar{u}_2\|_{L^\infty}\iint_{\bbr^3 \times \bbr^3}\iint_{\bbr^3 \times \bbr^3}\varepsilon( |\nabla \tilde{u}(x_1,t)|^2+ |\nabla \tilde{u}(x_2,t)|^2) + \frac{1}{\varepsilon}|x_1-x_2|^2 d\gamma_t(x_1,v_1,x_2,v_2)
\]
\begin{equation}\label{eq 19 : continuity}
\leq \|\nabla \bar{u}_2\|_{L^\infty}(\|\rho_1\|_{L^\infty}\| \tilde{u}\|_{L^2}^2 + Q(t)) +  \| \bar{u}_2\|_{L^\infty}(\varepsilon\|\nabla \tilde{u} \|_{L^2}^2+ \frac{1}{\varepsilon}Q(t)).
\end{equation}
Combining \eqref{eq 18 : continuity} and \eqref{eq 19 : continuity}, we have
\begin{equation}\label{eq 20 : continuity}
A(t) \leq C_1 \big(\|\tilde{u}\|_{L^2}^2 +Q(t)\big) + \varepsilon
C_2\|\nabla \tilde{u}\|_{L^2}^2,
\end{equation}
where $C_1=C_1(\|\rho_1\|_{L^\infty(Q_T)}, \|\bar{u}_2
\|_{L^\infty_tH^3_x(Q_T)}),  \|\tilde{u} \|_{L^\infty_t
H^3_x(Q_T)})$, and $C_2$ depends on $(\|m_2f_1\|_{L^\infty(Q_T)}$,
$\|m_2f_2\|_{L^\infty(Q_T)}$,
$\|\bar{u}_2\|_{L^\infty_tH^3_x(Q_T)})$. Hence, we have
\begin{equation}\label{eq 22 : continuity}
\begin{aligned}
&\frac{1}{2}\frac{d}{dt}\|\tilde{u}\|_{L^2}^2+ (m_0-\varepsilon
\hat{C}) \|\nabla \tilde{u} \|_{L^2}^2\\
 \leq &(\|\nabla u_2 \|_{L^\infty}+ \|\rho_1 \|_{L^\infty})\big(\|\tilde{u} \|_{L^2}^2 +\|\bar{u}_1-\bar{u}_2 \|_{L^2}^2 \big)  + C_1 \big(\|\tilde{u}\|_{L^2}^2
 +Q(t)\big)\\
\leq & C_1 \big(\|\tilde{u}\|_{L^2}^2 +Q(t)\big)+
C_3\|\bar{u}_1-\bar{u}_2 \|_{L^2}^2,
\end{aligned}
\end{equation}
where $C_3$ depends on $\|\rho_1\|_{L^\infty(Q_T)}$ and $\|
\bar{u}_2 \|_{L^\infty_tH^3_x(Q_T)}$. Plugging \eqref{eq 21 :
continuity} into \eqref{eq 22 : continuity}, we get
\begin{equation*}\label{eq 13 : continuity}
\begin{aligned}
\frac{1}{2}\frac{d}{dt}\|\tilde{u}(t)\|_{L^2}^2 &\leq  C_1
\|\tilde{u}(t)\|_{L^2}^2+ C_3 \|(\bar{u}_1-\bar{u}_2)(t) \|_{L^2}^2
+C_1e^{C_1t}\int_0^t\|(\bar{u}_1-\bar{u}_2)(s) \|_{L^2}^2 ds.
\end{aligned}
\end{equation*}
Using Gronwall's Lemma with $\tilde{u}(0)\equiv 0$, we obtain
\begin{equation}\label{eq 16 : continuity}
\|\tilde{u}(t)\|_{L^2}^2 \leq e^{2C_1t}\int_0^t(g(s)+h(s))ds,
\end{equation}
where
$$g(t):=2C_3\|(\bar{u}_1-\bar{u}_2)(t) \|_{L^2}^2 \qquad  h(t):=2C_1e^{C_1t}\int_0^t \|(\bar{u}_1-\bar{u}_2)(s) \|_{L^2}^2 ds.$$
From \eqref{eq 16 : continuity}, we have
\begin{equation*}
\|\tilde{u} \|_{L^\infty(0,T;L^2(\bbr^3))}^2\leq
e^{4C_1T}(C_3+C_1T)\|\bar{u}_1-\bar{u}_2
\|_{L^2(0,T;L^2(\bbr^3))}^2,
\end{equation*}
which gives us
\begin{equation*}
\begin{aligned}
\|\tilde{u} \|_{L^2(0,T;L^2(\bbr^3))}^2&\leq T \|\tilde{u} \|_{L^\infty(0,T;L^2(\bbr^3))}^2\\
&\leq Te^{4C_1T}(C_3+C_1T)\|\bar{u}_1-\bar{u}_2 \|_{L^2(0,T;L^2(\bbr^3))}^2\\
&\leq \frac{1}{4}\|\bar{u}_1-\bar{u}_2 \|_{L^2(0,T;L^2(\bbr^3))}^2
\end{aligned}
\end{equation*}
for some small $T\ll 1$.
\end{proof}

\begin{corollary}\label{thm : contraction}
Assume $u_0 \in H^3(\bbr^3)$ and $f_0 \in
\mathcal{P}_2(\bbr^3\times\bbr^3)$ satisfying \eqref{Condition-1 :
f_0}. Let
$$X_M = \{ u | \|u\|_{L^\infty(0,T:H^3(\bbr^3)} + \|u \|_{L^2(0,T;H^4(\bbr^3))}\leq M\}$$
and  $\Theta: X_M \rightarrow X_M$ be defined by $u:=\Theta(\bar{u})$ where
\begin{equation*}
\begin{aligned}
 &\partial_t u-\Delta u+(\bar{u}\cdot \nabla)u +\nabla_x p  =-3\int_{\bbr^3} (\bar{u} - v) f dv, \quad  \nabla_x \cdot u = 0,\\
&\partial_t f +  \nabla_x \cdot(v f) + \nabla_v\cdot\big[(\bar{u}-v)f\big] = 0,\\
 &u_i(0)=u_0,~ f(0)=f_0.
\end{aligned}
\end{equation*}
Then, for small $T\ll 1$, $\Theta$ is a contraction mapping with respect to the
topology induced by the norm $L^2(0,T;L^2(\bbr^3))$. That is, we have
\begin{equation}\label{eq-1 : contraction}
\begin{aligned}
\|u_1-u_2 \|_{L^2(0,T;L^2(\bbr^3))}
&\leq \frac{1}{2}\|\bar{u}_1-\bar{u}_2 \|_{L^2(0,T;L^2(\bbr^3))}
\end{aligned}
\end{equation}
for small $T\ll 1$.
\end{corollary}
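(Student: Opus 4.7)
The plan is to read this corollary as an immediate packaging of the preceding lemma (the one that established estimate \eqref{diff-solu-unique}) together with the uniform bounds built into the definition of $X_M$. The key point is that once $\bar{u}_1,\bar{u}_2\in X_M$, all of the constants appearing in that lemma are controlled purely in terms of $M$ and the data, so the only remaining freedom is to shrink $T$.

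First I would verify that the hypotheses of the preceding lemma apply to any pair $\bar{u}_1,\bar{u}_2\in X_M$. By definition of $X_M$ we have $\|\bar{u}_i\|_{L^\infty_tH^3_x}+\|\bar{u}_i\|_{L^2_tH^4_x}\le M$, which in particular controls $\|\bar{u}_2\|_{L^\infty_tH^3_x}$. Proposition \ref{vlasov_pro} produces the unique Vlasov solutions $f_i$ with the weighted bound, and the computation in Section 3.1.2 then gives a pointwise bound on $\rho_i(t,x)=\int f_i\,dv$ depending only on $M$, $T$ and the data, so $\|\rho_1\|_{L^\infty(Q_T)}$ is under control. Similarly the Sobolev embedding $H^3\hookrightarrow L^\infty$ together with $u_1,u_2\in X_M$ yields $\|\tilde u\|_{L^\infty_tH^3_x}\le 2M$. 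Hence the constants $C_1=C_1(\|\rho_1\|_{L^\infty},\|\bar{u}_2\|_{L^\infty_tH^3_x},\|\tilde u\|_{L^\infty_tH^3_x})$ and $C_3=C_3(\|\rho_1\|_{L^\infty},\|\bar{u}_2\|_{L^\infty_tH^3_x})$ appearing in \eqref{diff-solu-unique} are bounded by a constant depending only on $M$, $T$ and the data.

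Next I would apply \eqref{diff-solu-unique} directly, which gives
\begin{equation*}
\|u_1-u_2\|_{L^\infty(0,T;L^2(\bbr^3))}^2\le e^{4C_1T}(C_3+C_1T)\,\|\bar{u}_1-\bar{u}_2\|_{L^2(0,T;L^2(\bbr^3))}^2.
\end{equation*}
Passing from the $L^\infty_tL^2_x$ norm to the $L^2_tL^2_x$ norm by the trivial estimate $\|\tilde u\|_{L^2(0,T;L^2)}^2\le T\|\tilde u\|_{L^\infty(0,T;L^2)}^2$ produces
\begin{equation*}
\|u_1-u_2\|_{L^2(0,T;L^2(\bbr^3))}^2\le T\,e^{4C_1T}(C_3+C_1T)\,\|\bar{u}_1-\bar{u}_2\|_{L^2(0,T;L^2(\bbr^3))}^2.
\end{equation*}
Since $C_1,C_3$ depend continuously on $T$ and are finite at $T=0$, the prefactor $T\,e^{4C_1T}(C_3+C_1T)$ tends to $0$ as $T\to 0^+$. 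Choosing $T$ small enough so that this prefactor is at most $1/4$ yields the desired contraction \eqref{eq-1 : contraction}.

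The only mild obstacle I anticipate is bookkeeping of the $T$-dependence of the constants: $\|\rho_1\|_{L^\infty(Q_T)}$ produced in Section 3.1.2 comes with a factor like $e^{3T}$ and $C_1$ absorbs $\|\nabla\bar u_2\|_{L^\infty}$ which is bounded on $[0,T]$ by $M$ via Sobolev embedding. Both factors stay bounded as $T\downarrow 0$, so the $T$ out front genuinely drives the prefactor to zero, and no additional smallness on the data is needed beyond that already guaranteed by $\bar u_i\in X_M$. This gives the contraction and completes the proof.
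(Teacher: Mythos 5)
Your proposal is correct and follows essentially the same route as the paper: the corollary is obtained by applying the preceding lemma's estimate \eqref{diff-solu-unique} with the constants $C_1,C_3$ controlled via $\bar u_i,u_i\in X_M$ (and the $\rho_1$, $m_2f_i$ bounds from Section 3.1.2), then using $\|\tilde u\|_{L^2(0,T;L^2)}^2\le T\|\tilde u\|_{L^\infty(0,T;L^2)}^2$ and shrinking $T$ so that $Te^{4C_1T}(C_3+C_1T)\le 1/4$, which is exactly the final step in the paper's lemma proof.
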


\appendix

\section{}

In this section, as mentioned, we give a proof of Lemma
\ref{local-ex-u}.

\begin{lem5} We suppose that $u$ is regular. We then compute certain a
priori estimates. First of all, we note that by the $L^2$-energy
estimate with \eqref{G : property},
\begin{equation}\label{eq-1 : H0}
\frac{d}{dt}\|u\|^2_{L^2(\bbr^3)}+m_0\|Du\|^2_{L^2(\bbr^3)}\leq
C\|u\|^2_{L^2(\bbr^3)}+C\|F\|^2_{L^2(\bbr^3)}.
\end{equation}

\noindent$\bullet$ ($\|\nabla u \|_{L^2}$-estimate)\, Taking
derivative $\partial_{x_i}$ to \eqref{NNS-F} and multiplying
$\partial_{x_i}u$,
\begin{equation*}\label{eq-1 : H1}
\frac{1}{2}\frac{d}{dt}||\partial_{x_i}u||^2_{L^2(\bbr^3)}+\int_{\bbr^3}\partial_{x_i}
(G\bkt{|Du|^2} Du): \partial_{x_i} Du\,dx
\end{equation*}
\[
=- \int_{\bbr^3}\partial_{x_i} \big((U\cdot \nabla) u \big )\cdot
\partial_{x_i} u\,dx-\int_{\bbr^3} F \cdot
\partial_{x_i}\partial_{x_i} u\,dx.
\]
Noting that
\begin{equation*}\label{eq-2 : H1}
\begin{aligned}
\partial_{x_i}(G\bkt{|Du|^2} Du): \partial_{x_i} Du&=\big[ \partial_{x_i}G\bkt{|Du|^2} Du + G\bkt{|Du|^2}\partial_{x_i} Du \big] :  \partial_{x_i} Du\\
&= 2 G^{'}[|Du|^2](D u : \partial_{x_i}Du)( Du: \partial_{x_i} Du) + G\bkt{|Du|^2}|\partial_{x_i} Du|^2\\
&= 2 G^{'}[|Du|^2]| Du: \partial_{x_i} Du|^2 +
G\bkt{|Du|^2}|\partial_{x_i} Du|^2,
\end{aligned}
\end{equation*}
we have
\begin{equation}\label{eq-4 : H1}
\frac{1}{2}\frac{d}{dt}||\partial_{x_i}u||^2_{L^2(\bbr^3)}+\int_{\bbr^3}G\bkt{|Du|^2}|\partial_{x_i}
Du|^2\,dx + \int_{\bbr^3}2G^{'}[|Du|^2]| Du: \partial_{x_i}
Du|^2\,dx
\end{equation}
\[
= -\int_{\bbr^3}\partial_{x_i} \big((U\cdot \nabla) u \big )\cdot
\partial_{x_i} u\,dx-\int_{\bbr^3} F \cdot \partial_{x_i}\partial_{x_i} u\,dx.
\]
Taking $A=Du$ and $B=\partial_{x_i} D u$ in the last
inequality, we use the following inequality:
\begin{equation}\label{app:eq-1 : G}
G[|A|^2]|B|^2 + 2 G^{\prime}[|A|^2](A:B)^2 \geq  m_0|B|^2.
\end{equation}
Indeed, if $G^{\prime}[|A|^2] \geq 0$, then
\begin{equation*}\label{app:eq-2 : G}
G[|A|^2]|B|^2 + 2 G^{\prime}[|A|^2](A:B)^2 \geq  G[|A|^2]|B|^2 \geq
m_0|B|^2.
\end{equation*}
In case that $G^{\prime}[|A|^2] <0$, we note that
\begin{equation*}\label{app:eq-3 : G}
G[|A|^2]|B|^2 + 2 G^{\prime}[|A|^2](A:B)^2 \ge \big(G[|A|^2] + 2
G^{\prime}[|A|^2]|A|^2  \big)|B|^2\geq m_0 |B|^2.
\end{equation*}
Applying the inequality \eqref{app:eq-1 : G} to \eqref{eq-4 : H1},
we obtain
\begin{equation}\label{eq-3 : H1}
\begin{aligned}
&\frac{1}{2}\frac{d}{dt}||\partial_{x_i}u||^2_{L^2(\bbr^3)}+\int_{\bbr^3}m_0
|\partial_{x_i} Du|^2\,dx \\
\leq &-\int_{\bbr^3}\partial_{x_i}
\big((U\cdot \nabla) u \big )\cdot
\partial_{x_i} u\,dx-\int_{\bbr^3} F \cdot \partial_{x_i}\partial_{x_i} u\,dx.
\end{aligned}
\end{equation}

We will treat the term in righthand side caused by convection
together later.

\noindent$\bullet$ ($\|\nabla^2 u \|_{L^2}$-estimate)\, Taking the
derivative $\partial_{x_{j}}\partial_{x_{i}}$ on \eqref{NNS-F} and
multiplying it by $\partial_{x_{j}}\partial_{x_{i}}u $,
\begin{equation}\label{eq-2 : H2}
\frac{1}{2}\frac{d}{dt}||\partial_{x_{j}}\partial_{x_{i}}u||^2_{L^2(\bbr^3)}+\int_{\bbr^3}\partial_{x_{j}}\partial_{x_{i}}\Big[G\bkt{|Du|^2}
D u\Big]: \partial_{x_{j}}\partial_{x_{i}}D u\,dx
\end{equation}
\[
= -\int_{\bbr^3}\partial_{x_{j}}\partial_{x_{i}}\big((U\cdot\nabla
u) \big)\cdot \partial_{x_{j}}\partial_{x_{i}} u\,dx-\int_{\bbr^3}
\partial_{x_i}\partial_{x_i}F : \partial_{x_i}\partial_{x_i} u\,dx.
\]
We observe that
\begin{equation}\label{eq-3 : H2}
\begin{aligned}
&\int_{\bbr^3}\partial_{x_{j}}\partial_{x_{i}}\Big[G\bkt{|Du|^2} D u\Big]:\partial_{x_{j}}\partial_{x_{i}}D u\,dx\\
&=\int_{\bbr^3}G\bkt{|Du|^2}|\partial_{x_{j}}\partial_{x_{i}}D u|^2\,
+\sum_{\sigma}\int_{\bbr^3}\partial_{x_{\sigma(i)}}G\bkt{|Du|^2} (\partial_{x_{\sigma(j)}}D u : \partial_{x_{j}}\partial_{x_{i}}D u)\,dx\\
&\hspace{2cm}+\int_{\bbr^3}\partial_{x_{j}}\partial_{x_{i}}G\bkt{|Du|^2}(D
u :
\partial_{x_{j}}\partial_{x_{i}}D u)\,dx=:I_{21}+I_{22}+I_{23},
\end{aligned}
\end{equation}
where $\sigma :\{ i,j\}\rightarrow \{i,j \}$ is a permutation of
$\{i,j \}$. We separately estimate terms $I_{22}$ and $I_{23}$ in
\eqref{eq-3 : H2}. Using H\"{o}lder, Young's and Gagliardo-Nirenberg
inequalities, we have for $I_{22}$
\[
|I_{22}|=\abs{\int_{\bbr^3}  2G^{'}[|Du|^2](Du:
\partial_{x_{\sigma(i)}}D u)(\partial_{x_{\sigma(j)}}D u :
\partial_{x_{j}}\partial_{x_{i}}Du)\,dx}
\]
\[
\leq  C\|G\bkt{|Du|^2}\|_{L^\infty}\|\nabla Du\|^2_{L^4}\|\nabla^2
Du\|_{L^2}
\]
\begin{equation*}\label{eq-5 : H2}
\leq   C\|G\bkt{|Du|^2}\|_{L^\infty}\|Du\|_{L^\infty}\|\nabla^2
Du\|^2_{L^2},
\end{equation*}
where we used the condition \eqref{G : property}.

\noindent  For $I_{23}$, using Lemma \ref{deriv-G}, we compute
\begin{equation*}
\begin{aligned}
I_{23}&= \int_{\bbr^3}2\big( G^{'}[|Du|^2](Du : \partial_{x_{j}}\partial_{x_{i}}Du) + E_2 \big) (D u : \partial_{x_{j}}\partial_{x_{i}}Du)\,dx\\
& =\int_{\bbr^3}E_2(D u : \partial_{x_{j}}\partial_{x_{i}}Du)\,dx
+ 2\int_{\bbr^3}G^{'}[|Du|^2]  |Du: \partial_{x_{j}}\partial_{x_{i}}Du|^2\,dx\\
&:=I_{231}+I_{232}.
\end{aligned}
\end{equation*}
The term $I_{231}$ is estimated as
\begin{equation}\label{eq-8 : H2}
\begin{aligned}
\abs{ I_{231}}&\leq C\|G\bkt{|Du|^2}\|_{L^\infty} \|Du\|_{L^\infty}\|\nabla Du\|^2_{L^4}\|\nabla^2 Du\|_{L^2}\\
 &\leq  C\|G\bkt{|Du|^2}\|_{L^\infty} \|Du\|_{L^\infty}^2\|\nabla^2
 Du\|_{L^2}^2,
\end{aligned}
\end{equation}%
where we used the first inequality of \eqref{eq-5 : derive G}. We
combine estimates \eqref{eq-2 : H2}-\eqref{eq-8 : H2} to get
\[
\frac{1}{2}\frac{d}{dt}||\partial_{x_{j}}\partial_{x_{i}}u||^2_{L^2(\bbr^3)}+\int_{\bbr^3}G\bkt{|Du|^2}|\partial_{x_{j}}\partial_{x_{i}}D
u|^2 + \int_{\bbr^3}2G^{'}[|Du|^2]|Du:
\partial_{x_{j}}\partial_{x_{i}}Du|^2
\]
\begin{equation*}\label{eq-4 : H2}
\leq C\|G\bkt{|Du|^2}\|_{L^\infty}(\|Du\|_{L^\infty} +
\|Du\|_{L^\infty}^2)\|\nabla^2 Du\|^2_{L^2}
-\int_{\bbr^3}\partial_{x_{j}}\partial_{x_{i}}\big((u\cdot\nabla u)
\big)\cdot \partial_{x_{j}}\partial_{x_{i}} u.
\end{equation*}
Similarly as in \eqref{eq-3 : H1}, we have
\[
\frac{1}{2}\frac{d}{dt}||\partial_{x_{j}}\partial_{x_{i}}u||^2_{L^2(\bbr^3)}+\int_{\bbr^3}m_0|\partial_{x_{j}}\partial_{x_{i}}D
u|^2
\]
\begin{equation}\label{eq-1 : H2}
\leq C\|G\bkt{|Du|^2}\|_{L^\infty}(\|Du\|_{L^\infty} +
\|Du\|_{L^\infty}^2)\|\nabla^2 Du\|^2_{L^2}
\end{equation}
\[
-\int_{\bbr^3}\partial_{x_{j}}\partial_{x_{i}}\big((U\cdot\nabla u)
\big)\cdot \partial_{x_{j}}\partial_{x_{i}} u-\int_{\bbr^3}
\partial_{x_i}F \cdot \partial_{x_j}\partial_{x_j}\partial_{x_i}
u\,dx.
\]
\noindent$\bullet$ ($\|\nabla^3 u \|_{L^2}$-estimate) For
convenience, we denote
$\partial^3:=\partial_{x_{k}}\partial_{x_{j}}\partial_{x_{i}}$.
Similarly as before, taking the derivative $\partial^3$ on
\eqref{NNS-F} and multiplying it by $\partial^3 u$,
\begin{equation}\label{eq-1 : H3}
\frac{1}{2}\frac{d}{dt}||\partial^3u||^2_{L^2(\bbr^3)}
+\int_{\bbr^3}\partial^3\Big[G\bkt{|Du|^2} D u\Big] :
\partial^3D u\,dx
\end{equation}
\[
-\int_{\bbr^3}\partial^3\big((U\cdot\nabla u) \big)\cdot\partial^3
u\,dx-  \int_{\bbr^3}\partial^3F:\partial^3 u\,dx.
\]
Direct computations show that
\[
\int_{\bbr^3}\partial^3\Big[G\bkt{|Du|^2} D u\Big] : \partial^3D
u\,dx=\int_{\bbr^3}G\bkt{|Du|^2}|\partial^3D u|^2\,dx
\]
\[
+\sum_{\sigma_3}\int_{\bbr^3}\partial_{x_{\sigma_3(i)}}G\bkt{|Du|^2}(\partial_{x_{\sigma_3(k)}}\partial_{x_{\sigma_3(j)}}D
u:\partial^3D u)\,dx
\]
\[
+\sum_{\sigma_3}\int_{\bbr^3}\partial_{x_{\sigma_3(j)}}\partial_{x_{\sigma_3(i)}}G\bkt{|Du|^2}(\partial_{x_{\sigma_3(k)}}D
u : \partial^3Du)\,dx
\]
\[
+\int_{\bbr^3}\partial^3G\bkt{|Du|^2}(D u:\partial^3D
u)\,dx=I_{31}+I_{32}+I_{33}+I_{34},
\]
where $\sigma_3=\pi_3\circ \tilde{\sigma}_3$ such that
$\tilde{\sigma}_3:\{i,j,k\}\rightarrow \{i,j,k\}$ is a permutation
of $\{i,j,k \}$ and $\pi_3$ is a mapping from $\{i,j,k\}$ to
$\{1,2,3\}$.

We separately estimate terms $I_{32}$, $I_{33}$ and $I_{34}$. We
note first that
\begin{equation}\label{eq-11 : H3}
\begin{aligned}
|I_{32}|&\leq \int_{\bbr^3}|2(G^{'}[|Du|^2]|
|Du||\partial_{x_{\sigma_3(i)}}Du||\partial_{x_{\sigma_3(k)}}\partial_{x_{\sigma_3(j)}}
D u|
|\partial^3 D u|\,dx\\
&\leq C\|G\bkt{|Du|^2}\|_{L^\infty}\|\nabla Du\|_{L^6}\|\nabla^2 Du\|_{L^3}\|\nabla^3 Du\|_{L^2}\\
&\leq C\|G\bkt{|Du|^2}\|_{L^\infty}\|\nabla^2 Du\|_{L^2} \| Du\|_{L^\infty}^{\frac{1}{3}} \|\nabla^3 Du\|^{\frac{2}{3}}_{L^2}\|\nabla^3Du\|_{L^2}\\
&\leq C\|G\bkt{|Du|^2}\|^6_{L^\infty}\| Du\|_{L^\infty}^2 \|\nabla^2
Du\|^{6}_{L^2}+\epsilon\|\nabla^3 Du\|^{2}_{L^2}.
\end{aligned}
\end{equation}
For $I_{33}$, we have
\[
|I_{33}|=\abs{\int_{\bbr^3}(
2G^{'}[|Du|^2]Du:\partial_{x_{\sigma_3(j)}}
\partial_{x_{\sigma_3(i)}} D u
+ E_2  )(\partial_{x_{\sigma_3(k)}}D u : \partial^3Du)\,dx}
\]
\[
\leq \int_{\bbr^3}2(G^{'}[|Du|^2]|Du||\nabla^2 D u| +
G\bkt{|Du|^2}|\nabla D u|^2  )|\nabla D u| |\nabla^3 Du|\,dx
\]
\[
\leq  C \|G^{'}[|Du|^2]Du\|_{L^\infty} \|\nabla^2 Du\|_{L^3}\|\nabla
Du\|_{L^6}\|\nabla^3 Du\|_{L^2} +\|G\bkt{|Du|^2}\|_{L^\infty}\|\nabla
Du\|^3_{L^6}\|\nabla^3 Du\|_{L^2}
\]
\[
\leq C\|G\bkt{|Du|^2}\|^6_{L^\infty}\|Du \|_{L^\infty}^2\|\nabla^2
Du\|^{6}_{L^2}
+C\|G\bkt{|Du|^2}\|^2_{L^\infty}\|\nabla^2Du\|^6_{L^2}+2\epsilon\|\nabla^3
Du\|^{2}_{L^2}
\]
\begin{equation}\label{eq-8 : H3}
\leq C(\|G\bkt{|Du|^2}\|^6_{L^\infty}\|Du
\|_{L^\infty}^2+\|G(|Du|)\|^2_{L^\infty})\|\nabla^2
Du\|^{6}_{L^2}+2\epsilon\|\nabla^3 Du\|^{2}_{L^2},
\end{equation}
where we use same argument as \eqref{eq-11 : H3} in the fourth
inequality. Finally, for $I_{34}$, using Lemma \ref{deriv-G}, we
note that
\begin{equation}\label{eq-5 : H3}
\begin{aligned}
I_{34}=&\int_{\bbr^3}\big( 2G^{'}[|Du|^2]Du:\partial^3 D u) + E_3\big)(D u:\partial^3D u)\,dx\\
=&2\int_{\bbr^3} G^{'}[|Du|^2]|Du:\partial^3 D u|^2\,dx +
\int_{\bbr^3} E_3(D u:\partial^3D u)\,dx.
\end{aligned}
\end{equation}
The second term in \eqref{eq-5 : H3} is estimated as follows:
\[
\int_{\bbr^3}E_3(D u:\partial^3D u)\,dx \leq  \int_{\bbr^3}|E_3||D
u||\nabla^3 D u|\,dx
\]
\[
\leq C\int_{\bbr^3} G\bkt{|Du|^2}\big(|\nabla Du|^3 +|\nabla^2
Du||\nabla Du| \big)|D u||\nabla^3 D u|\,dx
\]
\[
\leq C\|G\bkt{|Du|^2}\|_{L^\infty}\|Du\|_{L^\infty} \big( \|\nabla
Du\|_{L^6}^3 + \|\nabla Du\|_{L^6}\|\nabla^2 Du\|_{L^3} \big
)\|\nabla^3 D u\|_{L^2}
\]
\[
\leq C\|G\bkt{|Du|^2}\|^2_{L^\infty}\|Du\|^2_{L^\infty}\|\nabla^2
Du\|^6_{L^2}+C\|G\bkt{|Du|^2}\|^6_{L^\infty}\|Du\|^4_{L^\infty}\|\nabla^2
Du\|^{6}_{L^2}+2\epsilon\|\nabla^3 Du\|^{2}_{L^2}
\]
\begin{equation}\label{eq-7 : H3}
\leq
C(\|G\bkt{|Du|^2}\|^2_{L^\infty}\|Du\|^2_{L^\infty}+\|G\bkt{|Du|^2}\|^6_{L^\infty}\|Du\|^4_{L^\infty})\|\nabla^2
Du\|^{6}_{L^2}+2\epsilon\|\nabla^3 Du\|^{2}_{L^2},
\end{equation}
where we use same argument as \eqref{eq-8 : H3} in the third
inequality. Adding up the estimates \eqref{eq-1 : H3}-\eqref{eq-7 :
H3}, we obtain
\[
\frac{d}{dt}||\partial^3u||^2_{L^2(\bbr^3)}
+\int_{\bbr^3}G\bkt{|Du|^2}|\partial^3D u|^2\,dx
+\int_{\bbr^3}2G^{'}[|Du|^2]|Du:
\partial^3Du|^2\,dx
\]
\[
\leq C(\|G\bkt{|Du|^2}\|^2_{L^\infty}+  \|G\bkt{|Du|^2}\|^6_{L^\infty}
)(\|Du\|^2_{L^\infty}+\|Du\|^4_{L^\infty})\|\nabla^2
Du\|^{6}_{L^2}+5\epsilon\|\nabla^3 Du\|^{2}_{L^2}
\]
\begin{equation*}\label{eq-13 : H3}
-\int_{\bbr^3}\partial^3\big((U\cdot\nabla u) \big)\cdot
\partial^3u\,dx-  \int_{\bbr^3}\partial^2F:\partial^4 u\,dx.
\end{equation*}
Hence, we have
\[
\frac{d}{dt}||\partial^3u||^2_{L^2(\bbr^3)}
+\int_{\bbr^3}m_0|\partial^3D u|^2\,dx
\]
\[
\leq C(\|G\bkt{|Du|^2}\|^2_{L^\infty}+  \|G\bkt{|Du|^2}\|^6_{L^\infty}
)(\|Du\|^2_{L^\infty}+\|Du\|^4_{L^\infty})\|\nabla^2
Du\|^{6}_{L^2}+5\epsilon\|\nabla^3 Du\|^{2}_{L^2}
\]
\begin{equation}\label{eq-9 : H3}
-\int_{\bbr^3}\partial^3\big((U\cdot\nabla u) \big)\cdot
\partial^3u\,dx-  \int_{\bbr^3}\partial^2F:\partial^4 u\,dx.
\end{equation}
Next, we estimate the terms caused by convection terms in
\eqref{eq-3 : H1}, \eqref{eq-1 : H2} and \eqref{eq-9 : H3}.
\begin{align}\label{s-2-2}
\begin{aligned}
\sum_{1\leq |\alpha|\leq 3}\int_{\bbr^3} \partial^{\alpha}[(U\cdot
\nabla)u]\cdot \partial^{\alpha}u\,dx
&=\sum_{1\leq |\alpha|\leq 3}\int_{\bbr^3} [\partial^{\alpha}((U\cdot \nabla)u)-U\cdot \nabla \partial^{\alpha}u]\partial^{\alpha}u\,dx\\
&\leq \sum_{1\leq |\alpha|\leq3}\|\partial^{\alpha}((U\cdot \nabla)u)-U\cdot \nabla \partial^{\alpha}u\|_{L^2}\|\partial^{\alpha} u\|_{L^{2}}\\
&\leq \sum_{1\leq |\alpha|\leq3} \|\nabla U\|_{L^\infty} \|u\|_{H^{3}}\|\partial^{\alpha} u\|_{L^{2}}\\
&\leq C\|\nabla U\|_{L^\infty}\| u\|^2_{H^{3}},
\end{aligned}
\end{align}
where we use the following inequality:
\[
\sum_{|\alpha|\leq m}\int_{\bbr^3} \|\nabla^{\alpha}(
fg)-(\nabla^{\alpha}f)g\|_{L^2}\leq C(\|f\|_{H^{m-1}}\|\nabla
+\|f\|_{L^{\infty}}\|g\|_{H^m}).
\]
For the external force $F$,
\begin{align}\label{s-2-2-f}
\begin{aligned}
\sum_{0\leq |\alpha|\leq 2}\int_{\bbr^3} \partial^{\alpha}F \cdot
\partial^{\alpha}u\,dx= C\|F\|^2_{H^2}+\frac{m_0}{64}\sum_{1\leq |\beta|\leq 2}\|\partial^{\beta}u\|^2_{L^{2}}.
\end{aligned}
\end{align}
We combine \eqref{eq-1 : H0}, \eqref{eq-3 : H1}, \eqref{eq-1 : H2}
and \eqref{eq-9 : H3} with \eqref{s-2-2} and \eqref{s-2-2-f} to
conclude
\begin{equation}\label{final-est-H3-local}
\begin{aligned}
&\frac{d}{dt}||u||^2_{H^3(\bbr^3)} +\frac{m_0}{2}\int_{\bbr^3} (|\nabla^3 Du|^2+|\nabla^2 Du|^2 + |\nabla Du|^2 + | Du|^2)\,dx\\
& \leq C\|\nabla U\|_{L^\infty}\| u\|^2_{H^{3}} +C\|F\|^2_{H^{2}}
+C\|G\bkt{|Du|^2}\|_{L^\infty}(\|Du\|^2_{L^\infty}+\|Du\|_{L^\infty})\|\nabla^2
Du\|^2_{L^2}\\
&+C(\|G\bkt{|Du|^2}\|^2_{L^\infty}+\|G\bkt{|Du|^2}\|^6_{L^\infty})(\|Du\|^2_{L^\infty}+\|Du\|^4_{L^\infty})\|\nabla^2
Du\|^{6}_{L^2}.
\end{aligned}
\end{equation}
Furthermore, we have
\begin{equation}\label{eq-2 : H3 local}
\|G\bkt{|Du|^2}\|_{L^\infty} \leq \max_{0\leq s\leq
\|Du\|_{L^\infty}}G[s] \leq \max_{0\leq s\leq
C\|u\|_{H^3}}G[s]:=g(\|u\|_{H^3}),
\end{equation}
where $g:[0,\infty) \mapsto [0,\infty)$ is a nondecreasing function.
We set $X(t) := \|u(t)\|_{H^3(\bbr^3)}$ and it then follows from
\eqref{final-est-H3-local} and \eqref{eq-2 : H3 local} that
\begin{equation*}\label{eq-1 : H3 local}
\frac{d}{dt} X^2\leq f_3(X)X^2+C\|F\|^2_{H^{2}(\bbr^3)}
\end{equation*}
for some nondecreasing continuous function $f_3$, which immediately
implies that there exists $T_3>0$ such that $ \sup_{0\leq t\leq
T_3}X(t) < \infty$.

We note that $\partial_t{u}\in L^2((0,T);L^2(\bbr^3))$. Indeed, we
introduce the antiderivative of $G$, denoted by $\tilde{G}$, i.e.
$\tilde{G}[s]=\int^s_0 G[\tau]d\tau$. Multiplying $\partial_t u$ to
\eqref{NNS-F}, integrating it by parts, and using H\"{o}lder and
Young's inequalities, we have
\begin{equation}\label{reg-time-20}
\frac{1}{2}\int_{\bbr^3}|\partial_t{u}|^2\,dx
+\frac{1}{2}\frac{d}{dt}\int_{\bbr^3}\tilde{G}[|Du|^2]\,dx\leq
C\int_{\bbr^3}|\bar{u}|^2|\nabla u|^2+C\int_{\bbr^3}|F|^2\,dx.
\end{equation}
Again, integrating the estimate \eqref{reg-time-20} over the time
interval $[0,T]$, we obtain
\[
\int_0^{T}\int_{\bbr^3}|\partial_t
u|^2\,dxdt+\int_{\bbr^3}\tilde{G}[|Du(\cdot, T)|^2]\,dx
\]
\begin{equation}\label{reg-time-300}
\leq \int_{\bbr^3}\tilde{G}[|Du_0|^2]\,dx+
C\int_0^{T}\int_{\bbr^3}|\bar{u}|^2|\nabla
u|^2\,dxdt+C\int_0^{T}\int_{\bbr^3}|F|^2\,dxdt.
\end{equation}
Using Sobolev embedding, the second term in \eqref{reg-time-300} is
estimated as follows:
\[
\int_0^{T}\int_{\bbr^3}|\bar{u}|^2|\nabla u|^2\,dxdt \leq \int_0^{T}
\|u\|^2_{L^{\infty}}\|\nabla u\|^2_{L^2}dt
\]
\begin{equation}\label{convec-2}
\leq C\sup_{0<\tau\leq T}\|u(\tau)\|^2_{H^{2}}\int_0^{T} \| \nabla
u\|^2_{L^{2}}dt<\infty,
\end{equation}
and due to the assumption for the external force $F$, we also get $
\int_0^{T}\int_{\bbr^3}|F|^2\,dxdt<\infty$. Therefore, we obtain
$\partial_t{u} \in L^{2}(0,T; L^2(\bbr^3))$.

For a uniqueness of a solution, we let $u_1$ and $u_2$ be strong
solutions for the system \eqref{NNS-F}. First of all, we rewrite the
equation for $\tilde{u}:=u_1-u_2$ and $\tilde{p}:=p_1-p_2$.
\begin{equation*}
\tilde{u}_t-\nabla \cdot (G(|Du_1|^{2})-G(|Du_2|^{2})) +(U\cdot
\nabla)\tilde{u}+\nabla \tilde{p}=0,
\end{equation*}
with $\nabla \cdot \tilde{u}=0$ and $\nabla \cdot U=0$. Multiplying
$\tilde{u}$ on the both sides of the equation above and integrating
on $\bbr^3$, we obtain
\begin{align}\label{equ-uni}
\begin{aligned}
&\frac{d}{dt}\|\tilde{u}\|^2_{L^2(\bbr^3)}+m_0\|\nabla\tilde{u}\|^2_{L^2(\bbr^3)}\leq0,
\end{aligned}
\end{align}
where we use Lemma \ref{r-400} and the divergence free condition.
Applying Grownwall's inequality to estimate \eqref{equ-uni}, we get
$\tilde{u}(x,0)=0$ in $L^\infty(0,T;L^2(\bbr^3))\cap
L^2(0,T;H^{1}(\bbr^3))$. Hence this imply the uniqueness of a
solution. Finally, we introduce Galerkin approximation procedure of
the equation \eqref{NNS-F}--\eqref{NNS-F-ini} to make up
construction of solution. We omit the this part (see refer to
\cite[Proposition 3.1]{KKK2} for detailed proof).
\end{lem5}

 \section*{Acknowledgments}
Kyungkeun Kang’s work is supported by NRF-2019R1A2C1084685. Hwa Kil Kim's work is supported by NRF-2021R1F1A1048231.
Jae-Myoung Kim was supported by National Research Foundation of Korea Grant funded by the Korean Government
(NRF-2020R1C1C1A01006521).


\begin{thebibliography}{00}
\bibitem{A-F} E. Acerbi, N. Fusco, An approximation lemma for $W^{1,p}$ functions. In Material
instabilities in continuum mechanics (Edinburgh, 1985.1986), Oxford Sci. Publ.,
pages 1.5. Oxford Univ. Press, New York, 1988.

\bibitem{A-G-S} L. Ambrosio, N. Gigli, G. Savare, Gradient flows in metric spaces and the Wasserstein spaces of probability measures,
Lectures in Mathematics, ETH Zurich, Birkhauser, 2005.




\bibitem{B-C-H-K14} H.-O. Bae, Y.-P. Choi,  S.-Y. Ha,  M. Kang,
\textit{Global existence of strong solution for the
Cucker-Smale-Navier-Stokes system.} J. Differential Equations. {\bf
257}, 2225-2255 (2014).

\bibitem{B-D} C. Baranger, L. Desvillettes, Coupling Euler and Vlasov equation in the context of sprays:
the local-in-time, classical solutions. J. Hyperbolic Differ. Equ.
{\bf 3}, 1-26 (2006).

\bibitem{B-D09} L. Boudin, L. Desvillettes, C. E. Grandmont, A. Moussa, Global existence of solution for the coupled Vlasov and
Navier-Stokes equations. Differential and integral equations, {\bf
22}, 1247-1271 (2009).
%

%
%
%
%




\bibitem{C-K} Y.-P. Choi, B. Kwon, Global well-posedness and large-time behavior for the inhomogeneous
Vlasov--Navier--Stokes equations, Nonlinearity, {\bf 28} (2018)
3309--3336.

\bibitem{G-J1} T. Goudon, P.-E. Jabin, A. Vasseur, Hydrodynamic limit for the Vlasov-Navier-Stokes equations I.
Light particles regime. Indiana Univ. Math. J. {\bf 53}, 1495-1515
(2004).

\bibitem{G-J2} T. Goudon, P.-E. Jabin, A. Vasseur,
Hydrodynamic limit for the Vlasov-Navier-Stokes equations II. Fine
particles regime. Indiana Univ. Math. J. {\bf 53}, 1517-1536 (2004).


\bibitem{HKKP18} S.-Y. Ha, H. K.
Kim, J.-M. Kim, J. Park, On the global existence of weak solutions
for the Cucker-Smale-Navier-Stokes system with shear thickening. Sci
China Math, {\bf 61}, 2033-2052 (2018).

\bibitem{Ham} K. Hamdache, Global existence and large time behavior of solutions for the Vlasov-Stokes equations. Japan J. Indust. Appl. Math., {\bf 15}, 51-74 (1998).

\bibitem{HMMM17} D. Han-Kwan, E. Miot, A. Moussa, I.
Moyano, Uniqueness of the solution to the 2D Vlasov-Navier-Stokes
system, to appear, Revista Matematica Iberoamericana, 20pp.

\bibitem{KKK1} K. Kang, H. K. Kim, J. Kim,
Existence of regular solutions for a certain type of Non-Newtonian
Navier--Stokes equations, Zeitschrift fur Angewandte Mathematik und
Physik, {\bf 70}, (2019), Art. 124.


\bibitem{KKK2} K. Kang, H. K. Kim, J. Kim, Existence and temporal decay  of regular solutions to non-Newtonian fluids coupled with Maxwell
equations, Nonlinear Analysis, {\bf 180}, 284--307 (2019).

\bibitem{K-thesis} H. K. Kim, Hamiltonian systmes and the calculus of
differential forms on the Wasserstein space. Ph.D thesis, Georgia
Institute of Technology.

%

\bibitem{MPP18} P. B. Mucha, J. Peszek, M. Pokorn\'{y}, Flocking
particles in a non-Newtonian shear thickening fluid, Nonlinearity,
{\bf 31} (2018) 2703--2725.



\bibitem{Te} R. Temam, Navier-Stokes Equations, American Math. Soc., 1984




\end{thebibliography}
\end{document}